\documentclass[11pt,reqno]{amsart}
\usepackage{amsmath,amssymb, amsfonts, amstext,amsthm, latexsym}
\pdfoutput=1
\allowdisplaybreaks
\setlength{\textwidth}{15cm}
\setlength{\textheight}{22cm}
\hoffset -10mm
\voffset -10mm

\DeclareMathOperator*{\esssup}{ess\,sup}

\newcommand{\cF}{{\cal F}}
\newcommand{\RR}{{\mathbb R}}

\newcommand{\EX}{{\mathbb E}}
\newcommand{\EE}{{\mathbb E}}

  \newcommand{\PX}{{\mathbb P}}
\newcommand{\PP}{{\mathbb P}}

\newcommand{\s}{\sigma}
\newcommand{\e}{\varepsilon}

\newcommand{\om}{\omega}
\newcommand{\Om}{\Omega}

\newcommand{\de}{\delta}

\renewcommand{\cF}{\mathcal F}

\numberwithin{equation}{section}
\newtheorem{theorem}{Theorem}[section]
\newtheorem{defn}[theorem]{Definition}

\newtheorem{lemma}[theorem]{Lemma}

\newtheorem{prop}[theorem]{Proposition}

\begin{document}
\title[ Stochastic anisotropic 3D Navier-Stokes]
{On stochastic  modified 3D Navier-Stokes 
 \\ equations with anisotropic  viscosity}

\author[H. Bessaih]{Hakima Bessaih}
\address{University of Wyoming, Department of Mathematics, Dept. 3036, 1000
East University Avenue, Laramie WY 82071, United States}
\email{ bessaih@uwyo.edu}

\author[A. Millet]{ Annie Millet}
\address{SAMM, EA 4543,
Universit\'e Paris 1 Panth\'eon Sorbonne, 90 Rue de
Tolbiac, 75634 Paris Cedex France {\it and} Laboratoire de
Probabilit\'es et Mod\`eles Al\'eatoires,
  Universit\'es Paris~6-Paris~7} 
\email{amillet@univ-paris1.fr} 




\thanks{  Hakima Bessaih was partially supported by NSF grant DMS 1418838. }  

\subjclass[2000]{ Primary 60H15, 60F10; Secondary 76D06, 76M35.} 

\keywords{Navier-Sokes equations, anisotropic viscosity, Brinkman-Forchheimer model, 
nonlinear convectivity, stochastic PDEs, large deviations}

\begin{abstract}
Navier-Stokes equations in the whole space $\RR^3$ subject to an anisotropic viscosity and a random perturbation of multiplicative type is described.
 By adding a term of Brinkman-Forchheimer type to the model, existence and uniqueness of global weak solutions in the PDE sense are proved. 
 These are strong solutions in the probability sense. The Brinkman-Forchheirmer term
 provides some extra regularity in the space $L^{2\alpha+2}(\RR^3)$, with $\alpha>1$. 
 As a consequence, the nonlinear term has better properties which allow to prove uniqueness.  
 The proof of existence is performed through a control method. 
 A Large Deviations Principle is  given and proven at the end of the paper.
\end{abstract}

\maketitle


\section{Introduction}\label{s1} The Navier-Stokes equations describe the time evolution of the velocity $u$ of an incompressible
fluid  in a bounded or unbounded domain of $\mathbb{R}^n, \ n=2, 3$ and are described by:
\begin{align*}
&\partial_t u - \nu \, \Delta u +  u\, \cdot \nabla u  + \nabla p=0,\\
&{\rm div } u=0, \quad u|_{t=0}=u_0, 
\end{align*}
where $\nu>0$ is the viscosity of the fluid and $p$ denotes the pressure. If existence and uniqueness is known to hold in
dimension 2, the case of dimension 3 is still only partially solved. Indeed, there exists a solution in some homogeneous Sobolev
space $\dot{H}^{1/2}$ either on a small time interval or on an arbitrary time interval if the norm of the initial condition is small enough.
 The difficulty in dimension 3 comes from the nonlinear term $(u\, \cdot\, \nabla) u$ that  requires more regularity. 
 However, this regularity is not satisfied by the energy estimates while it is in dimension 2. 
 In particular, the lack of this regularity is essentially the reason the uniqueness  
 cannot  be proved for weak solutions. Many regularizations have been introduced to overcome this difficulty.  
 Here, we will discuss only two of them; a regularization by a rotating term $u\times e_{3}$ and a  regularization 
 by a Brickman-Forchheimer term $\big| u\big|^{2\alpha} \, u$. Of course these two different regularizations give rise to different models. 
 One is related to some rotating flows while the other is related to some porous media models.   
   We refer to \cite{MTT} and the references therein, where the following system has been investigated  (in an even more general
   formulation) 
\begin{align*}
&\partial_t u - \nu \, \Delta u + u\, \cdot \nabla u  + \nabla p+ a \big| u\big|^{2\alpha} \, u=f,\\
&{\rm div } u=0, \quad u|_{t=0}=u_0, 
\end{align*}
where $a>0$ and $\alpha>0$ and $f$ is an external force. 
Under some assumptions on the coefficient $\alpha$, the authors in \cite{MTT} prove the existence and uniqueness of global strong solutions.

A slightly different regularization has been investigated by Kalantarov and Zelik in \cite{KZ}; more precisely they studied 
 some versions of the following model:
 \begin{align*}
&\partial_t u - \nu \, \Delta u + u\, \cdot \nabla u  + g(u)+\nabla p=f , \\
&\mbox{\rm div }\; u=0, \quad u|_{t=0}=u_0,
\end{align*}
where $g\in C^2(\RR^3, \RR^3)$  satisfies the following properties:

\begin{eqnarray}\label{KZ}
\left\{ \begin{array}{ll}
g'(u)v\cdot v &\geq (-K+\kappa |u|^{r-1})|v|^2, \quad \forall u, v\in \RR^3, \\
|g'(u)| &\leq C(1+|u|^{r-1}), \quad \forall u\in \RR^3, 
\end{array}
\right. 
\end{eqnarray}
where $K, C, \kappa$ are some positive constants, $r\in [1, \infty)$  and $u\cdot v$ stands for the inner product in $\RR^3$.  
When the forcing is of random type, that is $f=\sigma(t,u) dW(t)$,  M.~R\"ockner,  T.~Zhang and X.~Zhang 
 tackled a stochastic version 
of a modification of the previous model \eqref{KZ}, 
that they called the  tamed stochastic Navier-Stokes equations,  in several  papers  such as  \cite{R_TZ_XZ}, and \cite{R_TZ}. 
 Let us mention that in both the deterministic and the stochastic versions of \eqref{KZ}, 
 the solutions are investigated when the regularity of initial condition is   at least $H^1$  and  the viscosity acts in all three directions. 

 \medskip
 
In this paper, we are interested in the 3D Navier-Stokes equations with anisotropic viscosity 
that is acting only in the horizontal directions.  
These models 
have some applications in  atmospheric dynamics where some informations are missing. 
The relevance of the anisotropic viscosity is explained through the Ekeman law (see e.g. \cite{Ped} or the
introduction of \cite{CDGG_2006}). 
 The aim of this paper is to study an anisotropic Navier-Stokes equation
in dimension 3  that is subject to some multiplicative  random forcing.
More precisely,  we consider the following model of a modified 3D anisotropic Navier-Stokes system  on a fixed time interval $[0,T]$ which can
be written formally as follows: 
\begin{align}
& \partial_t u  - \nu\,  \Delta_h  u + u\, \cdot \nabla u + a\,  \big| u\big|^{2\alpha} \, u + \nabla p  =  \sigma(t,u)\, \dot{W} 
 \quad \mbox{\rm for } \; (t,x)\in [0,T]\times \RR^3,  \label{saNS_p}\\
&\nabla\, \cdot \, u =0  \quad \mbox{\rm for } \; (t,x)\in [0,T]\times \RR^3, \nonumber 
\end{align}

with the initial condition $u_0$ independent of the driving noise $W$. Here the viscosity $\nu$ and the coefficient $a$ 
of the nonlinear convective term are  strictly positive, $\alpha >1$,
$\partial_t$ denotes the time partial derivative, $\Delta_h:= \partial_1^2 + \partial_2^2$ and $\partial_i$ denotes the partial derivative
in the direction $x_i$, $i=1,2,3$. Thus the viscosity is only smoothing in the horizontal directions.
 As usual the fluid is incompressible, $p$ denotes the pressure;  the forcing term 
 $\sigma(t,u)\, \dot{W}$ is a multiplicative  noise driven by an infinite dimensional Brownian motion $W$ which is white in
time with spatial correlation.   The convective term $a |u|^{2\alpha} u$ is of Brinkman-Forchheimer type and
has a regularizing effect which can balance on one hand the vertical partial derivative of the bilinear term to prove existence, and on the other hand
provide some control to obtain uniqueness. Note that the space $L^{2\alpha +2}(\RR^3)$ appears naturally in the analysis of \eqref{saNS_p}; it is equal
to $L^4(\RR^3)$ if $\alpha=1$. Furthermore, the homogeneous critical Sobolev space $\dot{H}^{1/2}$ for the Navier-Stokes equation is included in
$L^4$. Hence it is natural to impose $\alpha >1$.

The deterministic counterpart of \eqref{saNS_p}, that is equation \eqref{saNS_p} with $\sigma=0$, has been studied by H.~Bessaih, S.~Trabelsi and
H.~Zorgati in \cite{BTZ}. The authors have proved that if the initial condition $u_0\in \tilde{H}^{0,1}$, for any $T>0$ there exists a unique solution in
$L^\infty(0,T; \tilde{H}^{0,1}) \cap L^2(0,T;\tilde{H}^{1,1})$ which belongs to $C([0,T],L^2)$, for some anisotropic Sobolev spaces which will
be defined in the next section (see \eqref{Hss'}). We generalize this result by allowing the system to be subject to some random external force
whose intensity may depend on the solution $u$ and  on   its horizontal gradient $\nabla_h u$. Note that since no smoothing is provided by
a viscosity in the vertical direction, in the anisotropic case, one requires that the initial condition $u_0$ is square
integrable as well as its vertical partial derivative. 
\smallskip

In  the deterministic setting (that is $\sigma=0$), replacing the Brinkman-Forchheimer  term $a|u|^{2\alpha} u$ by 
the rotating term $\frac{1}{\epsilon} u\times e_3$, J.Y.~Chemin, B.~Desjardin, I.~Gallagher and E.~Grenier \cite{CDGG_2000} have  studied an anisotropic
modified Navier Stokes equation on $\RR^3$ with a vertical viscosity $\nu_v \geq 0$, which is allowed to vanish. Using some homogeneous
anisotropic spaces, they have proved that if $u_0\in H^{0,s}$ with $s>\frac{3}{2}$, there exists $\epsilon_0$ depending only on $\nu$ and $u_0$
such that for $\epsilon\in (0, \epsilon_0]$,
\begin{align*}
& \partial_t u  - \nu\,  \Delta_h  u + u\, \cdot \nabla u + \frac{1}{\epsilon}  u \times e_3 + \nabla p  =  0, 
 \quad \mbox{\rm for } \; (t,x)\in [0,T]\times \RR^3,  \\
&\nabla\, \cdot \, u =0  \quad \mbox{\rm for } \; (t,x)\in [0,T]\times \RR^3, \quad u|_{t=0}=u_0
\end{align*}
has a unique global solution in $L^\infty(0,T ; H^{0,s}) \cap L^2(0,T;H^{1,s})$. The dispersive Brickman-Forchheimer term is "larger" than the rotating term used in \cite{CDGG_2000} but the regularity required on the initial condition is weaker and we allow a stochastic forcing term.
\smallskip

 The paper is organized as follows. In section \ref{functional} we describe the functional setting of our anisotropic model and prove some  
 technical properties of the deterministic terms. Several results were already proved in \cite{BTZ} and we sketch the arguments for the sake
 of completeness.  We also describe the random forcing term and the growth and Lipschitz assumptions on the diffusion coefficient  $\sigma$. 
 In section 3 we prove that if $u_0\in L^4(\Omega,\tilde{H}^{0,1})$ is independent of $W$ and $\sigma$ satisfies some general assumptions 
 (in particular cases $\sigma$ may contain some "small multiple" of the horizontal gradient
 $\nabla_h u$), equation \eqref{saNS_p} has a unique solution in $L^4(\Omega ; L^\infty(0,T; \tilde{H}^{0,1})) \cap L^2(\Omega;L^2(0,T:\tilde{H}^{1,1}))
 \cap L^{2\alpha+2}(\Omega\times (0,T)\times \RR^3)$, which is almost surely continuous from $[0,T]$ to $H$,
 where $H$ denotes the set of square integrable divergence free functions. Examples of such
 coefficients $\sigma$ are provided. Since we are working on the whole space $\RR^3$, and not on a bounded domain, the martingale approach
 used in  \cite{BM},  
  which depends on tightness properties, does not seem appropriate. We use instead the control method
 introduced in \cite{MS02} for the 2D Navier-Stokes equation; see also \cite{Sundar}, \cite{DM}, \cite{CM} and
  \cite{R_TZ}, where this method has been used for 
 the stochastic 2D Navier-Stokes equations, stochastic 2D  general hydrodynamical B\'enard models
  and the stochastic 3D tamed Navier-Stokes equations. 
In section 4,  under stronger assumptions on $\sigma$ (which may no longer depend on the horizontal gradient $\nabla_h u$), 
we also prove a large deviations
 result in $C([0,T];H)\cap L^2(0,T;\tilde{H}^{1,0})$ when the noise intensity is multiplied by a small parameter $\sqrt{\epsilon}$ converging to 0.
 The proof uses the weak-convergence approach introduced by A.~Budhiraja , P.~Dupuis and R.S.~Ellis in  \cite{DupEl97} and \cite{BD00}; 
 see also the references
 \cite{Sundar}, \cite{DM}, \cite{CM} and \cite{R_TZ} where this approach, based on the equivalence of the Large Deviations and Laplace principles,
  is used
 for various stochastic 2D Hydrodynamical models and the stochastic 3D tamed Navier-Stokes equation. For the sake of completeness, 
 some technical well-posedness  result for a stochastic controlled equation and estimates which only depend on the norm 
 stochastic control,  whose proofs are similar to that of
  the original equation in section 3, are given in the  appendix. The proof of the weak convergence and compactness arguments, which have
  also been used in some papers on Large Deviations Principles of stochastic hydrodynamical models, are also described in the appendix
\smallskip

\section{The functional setting}\label{functional} 
\subsection{Some notations} \label{notations}
Let us describe some further notations and the functional framework we will use throughout the paper.
Given a vector $x=(x_1,x_2,x_3)$ let $x_h:=(x_1,x_2)$ denote the horizontal variable, which does not play the same role as the
vertical variable $x_3$. 
Due to the anisotropic feature of the model,  we use anisotropic Sobolev spaces defined  as follows:
given $s,s'\in \RR$  let $H^{s,s'}$ denote the set of tempered distributions $\psi\in {\mathcal S}^{'}(\RR^3)$ such that
\begin{equation} \label{Hss'}
 \|\psi\|_{s,s'}^2:= \int_{\RR^3} \big( 1+\big|(\xi_1,\xi_2)\big|^{2s}\big)\, \big(1+\big|\xi_3\big|^{2s'}\big) \, \big| {\mathcal F} \psi(\xi)\big|^2\, d\xi <\infty ,
 \end{equation}
where ${\mathcal F}$ denotes the Fourier transform. The set $H^{s,s'}$ endowed with the norm $\|\, \cdot\, \|_{s,s'}$ is a Hilbert space. 

Set ${\rm div}_h u = \partial_1 u_1 + \partial_2 u_2$. Note that for $u\in \big( H^{1,0}\cap H^{0,1}\big)^3$ 
\begin{equation}  \label{divergence}
\nabla\, \cdot u =0 \quad \mbox{\rm implies} \quad {\rm div}_h u = - \partial_3 u_3.
\end{equation} 

For exponents $p,q\in [1,\infty)$ let $\|\, \cdot\, \|_p$ denote the $L^p(\RR^3)$ norm while $L^p_h(L^q_v)$  denotes
the space $L^p(\RR_{x_1}\, \RR_{x_2} , L^q(\RR_{x_3}))$   endowed with the norm
\[ \| \phi\|_{L^p_h(L^q_v)} := \Big\{ \int_{\RR^2} \Big( \int_{\RR} \big| \phi (x_h,x_3) \big|^q dx_3\Big)^{\frac{p}{q}} dx_h\Big\}^{\frac{1}{p}}.\]
The space  $L^q_v(L^p_h) = L^q(\RR_{x_3} ; L^p(\RR_{x_1}\, \RR_{x_2} ) )$ is defined in a similar way and endowed with the norm
$ \| \phi\|_{L^q_v(L^p_h)} := \big\{ \int_{\RR} \big( \int_{\RR^2} \big| \phi (x_h,x_3) \big|^p dx_h\big)^{\frac{q}{p}} dx_3\big\}^{\frac{1}{q}}$.
Note that in the above definitions we may assume that $p$ or $q$ is $\infty$  changing the norm accordingly.

Let $\mathcal{V}$ be the space of infinitely differentiable vector
fields $u$ on $\RR^3$ with compact support and
satisfying $\nabla\cdot u=0$.  Let us denote by $H$ the closure of $\mathcal{V}$
in $L^{2}(\RR^3; \mathbb{R}^3)$,   that is
$$H = \left\{u\in L^{2}({\mathbb R}^3;{\mathbb R}^3)\, : \,  \nabla\cdot u=0\ {\rm in}\
\RR^3 \right\}.$$
 The space $H$ is a
separable Hilbert space with the inner product inherited from
$L^{2}$, denoted in the sequel by $(.,.)$ with corresponding norm 
$|\, .\,|_{L^2}$. 

To ease notations, when no confusion arises let $L^p$ (resp. $L^q_v(L^p_h)$)  also denote the set of triples of functions $u=(u_1,u_2,u_3)$
such that each component $u_j$ belongs to $L^p$ (resp. to $L^q_v(L^p_h)$), $j=1,2,3$,
 that is $u\in L^p(\RR^3;\RR^3)$ (resp. $u\in L^q_v(L^p_h)(\RR^3;\RR^3)$).
For non negative indices $s,s'$ we  set 
\[ \tilde{H}^{s,s'}:= \big( H^{s,s'}\big)^3 \cap H\quad \mbox{\rm and again}\quad \|\,\cdot\,\|_{s,s'} \quad\mbox{\rm for the corresponding norm}.\] 
We denote by $(\cdot,\cdot)_{0,1}$ the scalar product in the Hilbert space $\tilde{H}^{0,1}$, that is for $u,v\in \tilde{H}^{0,1}$:
\[ \big( u\, ,\, v\big)_{0,1}=\sum_{j=1}^3 \int_{\RR^3} u_j(x) \, v_j(x)\, dx + \sum_{j=1}^3 \int_{\RR^3} \partial_3 u_j(x) \, \partial_3 v_j(x)\,  dx.\]

As defined previously, we set $\Delta_h:=\partial_1^2 + \partial_2^2$; 
integration by parts implies that given $u\in (H^{2,0})^3$ we have
\[ \big( \Delta_h u \, , u\big) = - \sum_{j=1}^3 \int_{\RR^3}
| \nabla_h u_j |_{L^2}^2 \, dx, \quad \mbox{\rm where} \quad \nabla_h u_j = (\partial_1 u_j, \, \partial_2 u_j, \,  u_j).\]
To ease notation, we write $\nabla_h u$ to denote the triple of functions $(\nabla_h u_j, j=1,2,3)$ so that 
$\big\langle \Delta_h u \, , u\big\rangle =- |\nabla_h u|_{L^2}^2$ for $u\in \tilde{H}^{1,0}$.

\par Note that as usual, starting with an initial condition
$u_0\in \tilde{H}^{0,1}$ and projecting equation \eqref{saNS_p}
on the space of divergence-free fields, we get rid of the pressure and rewrite the evolution equation as follows: 
\begin{equation} 
 \partial_t u  - \nu \, A_h  u + 
 B(u,u) 
 + a\,  \big| u\big|^{2\alpha} \, u   =  \sigma(t,u)\, \dot{W} 
 \quad \mbox{\rm for } \; (t,x)\in [0,T]\times \RR^3,  \label{saNS}
 \end{equation}
 where 
 \begin{equation} \label{defB}
  A_h u =  P_{\rm div} \Delta_h u, \quad B(u,v)= P_{\rm div} \big( u\, \cdot \, \nabla  v\big), \quad  |u|^\alpha u =  P_{\rm div}(|u|^\alpha u ), 
   \end{equation} 
  and $P_{\rm div}$
 denotes the projection on divergence free functions.  For $u\in H^1$ such that $\nabla\cdot u =0$, set 
 \[ B(u):=B(u,u).\]

\subsection{Some properties of the non linear terms} 
In this section, we describe some properties of the non linear terms $B(u)=u\cdot \nabla u$ and $|u|^{2\alpha} u$ in equation \eqref{saNS}. 
They will be crucial to obtain apriori estimates and prove  global well posedness. 

First, for $u,v,w$ in the classical (non isotropic) Sobolev space $H^1$ such that $\nabla\cdot u = \nabla\cdot v = \nabla\cdot w=0$,
the classical antisymmetry property is satisfied:
\begin{equation}  \label{antisymetry}
\big\langle  \big( B(u,v) \, ,  w \big\rangle  = - \big\langle  \big( B(u,w) \, ,  v \big\rangle, \quad \mbox{\rm and} \quad 
\big\langle  \big( B(u,v) \, ,  v \big\rangle=0.
\end{equation}

We will prove that under proper assumptions on the initial condition $u_0$ and on the stochastic forcing term, the solution $u$
to the SPDE  \eqref{saNS}  belongs a.s. to the set $X$ defined by 
\begin{equation} \label{defX}
X:= L^\infty(0,T;\tilde{H}^{0,1})\cap L^2(0,T;\tilde{H}^{1,1})\cap  L^{2(\alpha +1)} ((0,T)\times \RR^3 ;\RR^3) 
\end{equation}
and endowed with the norm 
\[ \|u\|_X:=   \sum_{j=1}^3\Big[ \esssup_{t\in [0,T]} \|u_j(t)\|_{0,1} + \Big( \int_0^T  \! \|u_j(t,.)\|_{1,1}^2 dt \Big)^{\frac{1}{2} }
+ \|u_j\|_{L^{2(\alpha +1)}([0,T]\times \RR^3)} \Big]. \]
For random processes, we set  $\Omega_T:= \Omega \times (0,T)$   endowed with the product measure
$ d\PX \otimes ds $ on $  \mathcal{F} \otimes  \mathcal{B} (0, T)$,  and 
\begin{equation} \label{calX}
{\mathcal X}:=L^4\big(\Omega;L^\infty(0,T;\tilde{H}^{0,1})\big)\cap L^4\big(\Omega;(L^2(0,T;\tilde{H}^{1,1})\big) \cap
L^{2(\alpha+1)}(
\Omega_T \times \RR^3;\RR^3).
\end{equation}

First, let us prove some integral upper estimates of the bilinear term. 
\begin{lemma}  \label{lemma_B_1}
Let $u\in L^\infty(0,T;H) \cap L^2(0,T;\tilde{H}^{1,0})$ and   $v\in L^\infty(0,T;H) \cap L^2(0,T;\tilde{H}^{1,1})$. Then
\begin{align}  \label{majB_1}
  \int_0^T \!\! \big| \big\langle B(&u(t)),v(t) \big\rangle \big| dt  \leq C \Big( \int_0^T \|v(t)\|_{1,1}^2 dt\Big)^{\frac{1}{2}} \, \esssup_{t\in [0,T]}|u(t)|_{L^2} \,
\Big( \int_0^T |\nabla_h u(t)|_{L^2}^2 dt\Big)^{\frac{1}{2}},\\
\big| \big\langle B(u&(t)) - B(v(t))  , (u-v)(t)\big\rangle \big|  \leq C \, \|v\|_{1,1}\, \big| \nabla_h (u(t)-v(t))\big|_{L^2}\, \big| (u-v)(t)\big|_{L^2}, \label{maj_B-B}\\ 
 \int_0^T \big| \big\langle &B(u(t)) - B(v(t)) , (u-v)(t)\big\rangle \big| dt  \leq  C \, \Big( \int_0^T \|v(t)\|_{1,1}^2 dt\Big)^{\frac{1}{2}} \nonumber \\
&\qquad\qquad  \qquad\qquad \times  \esssup_{t\in [0,T]}|(u-v)(t)|_{L^2} \, \Big( \int_0^T |\nabla_h \big( (u-v)(t)\big) |_{L^2}^2 dt\Big)^{\frac{1}{2}}.
 \label{maj_B-B_1} 
\end{align} 
\end{lemma}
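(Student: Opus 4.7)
The strategy is to use the antisymmetry property \eqref{antisymetry} to transfer one spatial derivative from $u$ onto $v$, then combine anisotropic H\"older inequalities with the 2D Ladyzhenskaya estimate $\|f(\cdot,x_3)\|_{L^4_h}^2 \leq C\,\|f(\cdot,x_3)\|_{L^2_h}\,\|\nabla_h f(\cdot,x_3)\|_{L^2_h}$ and 1D Sobolev embeddings in the vertical variable $x_3$. The target is the pointwise (in $t$) bound
\[ |\langle B(u),v\rangle| \leq C\,|u|_{L^2}\,|\nabla_h u|_{L^2}\,\|v\|_{1,1}, \]
after which \eqref{majB_1} follows immediately by Cauchy--Schwarz in $t$, the factor $|u|_{L^2}$ being absorbed into the essential supremum.

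To establish this pointwise bound, antisymmetry gives $\langle B(u),v\rangle = -\int_{\RR^3}(u\cdot\nabla v)\cdot u\,dx$, and splitting $\nabla=(\nabla_h,\partial_3)$ produces $|\langle B(u),v\rangle| \le I_1+I_2$ with $I_1=\int|u_h||\nabla_h v||u|\,dx$ and $I_2=\int|u_3||\partial_3 v||u|\,dx$. For $I_1$, I would apply horizontal H\"older with exponents $(4,2,4)$, bound each $L^4_h$ factor by Ladyzhenskaya, then integrate in $x_3$ with exponents $(2,2,\infty)$ using the 1D Sobolev estimate $\|\nabla_h v\|_{L^\infty_v(L^2_h)}^2 \leq C\|\nabla_h v\|_{L^2}\|\partial_3\nabla_h v\|_{L^2}\leq C\|v\|_{1,1}^2$; this yields $I_1 \leq C\,|u|_{L^2}\,|\nabla_h u|_{L^2}\,\|v\|_{1,1}$.

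The main obstacle is $I_2$: the hypotheses do not control $\partial_3 u$, so an $L^\infty_v$ norm on a generic component of $u$ is unavailable. The remedy is to exploit the divergence-free condition \eqref{divergence}, which gives $\partial_3 u_3 = -\text{div}_h u_h$ and hence $\|\partial_3 u_3\|_{L^2}\leq |\nabla_h u|_{L^2}$; 1D Sobolev then furnishes the key bound
\[ \|u_3\|_{L^\infty_v(L^2_h)}^2 \leq C\,\|u_3\|_{L^2}\,\|\partial_3 u_3\|_{L^2} \leq C\,|u|_{L^2}\,|\nabla_h u|_{L^2}. \]
Applying horizontal H\"older with exponents $(2,4,4)$, Cauchy--Schwarz in $x_3$, and Ladyzhenskaya on $\|u\|_{L^2_v(L^4_h)}^2 \leq C|u|_{L^2}|\nabla_h u|_{L^2}$ and $\|\partial_3 v\|_{L^2_v(L^4_h)}^2 \leq C\|v\|_{1,1}^2$, one obtains
\[ I_2 \leq \|u_3\|_{L^\infty_v(L^2_h)}\,\|\partial_3 v\|_{L^2_v(L^4_h)}\,\|u\|_{L^2_v(L^4_h)} \leq C\,|u|_{L^2}\,|\nabla_h u|_{L^2}\,\|v\|_{1,1}. \]

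For \eqref{maj_B-B} and \eqref{maj_B-B_1}, set $w:=u-v$ and use the decomposition $B(u)-B(v)=B(u,w)+B(w,v)$. Since $\nabla\cdot u=0$, antisymmetry kills the first term in the inner product against $w$, giving $\langle B(u,w),w\rangle = 0$, so that $\langle B(u)-B(v),u-v\rangle = \langle B(w,v),w\rangle$. As $\nabla\cdot w=0$, the two-step argument above applies verbatim with $u$ replaced by $w$, producing the pointwise bound \eqref{maj_B-B}, and Cauchy--Schwarz in $t$ then yields \eqref{maj_B-B_1}.
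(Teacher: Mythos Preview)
Your proposal is correct and follows essentially the same approach as the paper: both use antisymmetry to move the derivative onto $v$, split into horizontal ($I_1\sim J_1$) and vertical ($I_2\sim J_2$) contributions, apply the 2D Gagliardo--Nirenberg (Ladyzhenskaya) inequality horizontally together with the 1D Sobolev bound $\|f\|_{L^\infty_v(L^2_h)}^2\le C\|f\|_{L^2}\|\partial_3 f\|_{L^2}$, and crucially invoke the divergence-free condition $\partial_3 u_3=-\mathrm{div}_h u_h$ to control $\|u_3\|_{L^\infty_v(L^2_h)}$ without any hypothesis on $\partial_3 u$. The only cosmetic difference is that the paper records the intermediate trilinear estimate \eqref{maj_B} for general $\varphi,\psi$ (giving a bound with factors $|\varphi|_{L^2}^{1/2}|\psi|_{L^2}^{1/2}|\nabla_h\varphi|_{L^2}^{1/2}|\nabla_h\psi|_{L^2}^{1/2}$), whereas you work directly with $\varphi=\psi=u$; for the purposes of this lemma the two are equivalent.
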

\begin{proof}  
Let us prove some upper estimates of $\big\langle B(\varphi, \psi) , v\big\rangle$ for $\varphi,\psi \in \tilde{H}^{1,0}$ and $v\in \tilde{H}^{1,1}$.
Since  $\nabla \, \cdot \, \varphi = \nabla \, \cdot\, \psi = \nabla\, \cdot \, v=0$, using notations similar to that in \cite{BTZ} and part of the
arguments in this reference used to prove the uniqueness of the solution, the antisymmetry \eqref{antisymetry} of $B$ yields
\begin{equation}   \label{J1-J2}
-\big\langle B(\varphi, \psi)\, ,\, v \big\rangle = \big\langle \big( B(\varphi, v) \, ,\, \psi \big\rangle= J_1+J_2,
\end{equation}  
where
\begin{align*}
J_1&:= \sum_{k=1}^2 \sum_{l=1}^3 \int_{\RR^3} \varphi_k(x) \, \partial_k v_l(x)\, \psi_l(x)\, dx, \quad 
J_2:= \sum_{l=1}^3 \int_{\RR^3} \varphi_3(x) \, \partial_3 v_l(x)\, \psi_l(x)\, dx. 
\end{align*}
The Fubini theorem and H\"older's inequality applied to the Lebesgue integral with respect to $dx_h$ imply that for almost every $t\in [0,T]$:
\begin{align*}   
|J_1|&\leq \sum_{k=1}^2 \sum_{l=1}^3 \int_{\RR} |\partial_k v_l(.,x_3)|_{L^2_h} \, \|\varphi_k(.,x_3)\|_{L^4_h} \,  \|\psi_l(.,x_3)\|_{L^4_h} \,dx_3
\\ 
&\leq \sum_{k=1}^2 \sum_{l=1}^3 \Big( \sup_{x_3} |\partial_k v_l(.,x_3)|_{L^2_h}\Big) \int_{\RR} \|\varphi_k(.,x_3)\|_{L^4_h} \, \|\psi_l(.,x_3)\|_{L^4_h}\, dx_3. 
\end{align*}
The Gagliardo-Nirenberg inequality implies that for almost every $x_3\in \RR$ we have for $\phi=\varphi_k(.,x_3)$ and $\phi=\psi_l(.,x_3)$: 
\begin{equation} \label{G-N_2D}
 \| \phi\|_{L^4_h} \leq C\, |\nabla_h \phi|_{L^2_h}^{\frac{1}{2}}\,  |\phi|_{L^2_h}^\frac{1}{2}.  
 \end{equation} 
On the other hand, for almost every  $x_3\in \RR$ the Cauchy-Schwarz inequality for the Lebesgue measure on $\RR^3$
implies for $k=1,2$ and $l=1,2,3$:
\begin{align*}  
|\partial_k v_l(.,x_3)|_{L^2_h}^2 &= \int_{-\infty}^{x_3} \frac{d}{dz} |\partial_k v_l(.,z)|_{L^2_h}^2\, dz 
=2 \int_{-\infty}^{x_3} \int_{\RR^2}\!  \partial_k v_l(x_h,z)\, \partial_z \partial_k v_l(x_h,z) dx_h dz \\ 
&\leq C |\nabla_h v|_{L^2} \, 
|\partial_3 \nabla_h v|_{L^2} \leq C \|v\|_{1,1}^2.
\end{align*}
Therefore, the H\"older inequality with respect to the Lebesgue measure $dx_3$ implies that 
\begin{align} \label{maj_J1}
 |J_1| \leq & C \; \|v\|_{1,1} \Big(\int_\RR |\nabla_h \varphi(.,x_3)|_{L^2_h}^2dx_3\Big)^{\frac{1}{4}}\, 
 \Big(\int_\RR |\nabla_h \psi(.,x_3)|_{L^2_h}^2dx_3\Big)^{\frac{1}{4}}\nonumber \\
 & \qquad \times 
\Big(\int_\RR | \varphi(.,x_3)|_{L^2_h}^2 dx_3\Big)^{\frac{1}{4}} \, \Big(\int_\RR | \psi(.,x_3)|_{L^2_h}^2 dx_3\Big)^{\frac{1}{4}}  \nonumber \\
\leq & C \; \|v\|_{1,1}  \,| \nabla_h \varphi |_{L^2}^{\frac{1}{2}} \,  | \nabla_h \psi |_{L^2}^{\frac{1}{2}} \,  |\varphi |_{L^2}^{\frac{1}{2}} 
\,  |\psi |_{L^2}^{\frac{1}{2}}  .
\end{align}
Using once more the Fubini theorem and H\"older's inequality with respect to $dx_h$ we deduce that
\begin{align*}  
|J_2|&\leq \sum_{l=1}^3 \int_\RR  \|\partial_3 v_l(.,x_3)\|_{L^4_h}\, |\varphi_3(.,x_3)|_{L^2_h}\, \|\psi_l(.,x_3)\|_{L^4_h} \, dx_3
\\  
&\leq \sum_{l=1}^3 \Big( \sup_{x_3} |\varphi_3(.,x_3)|_{L^2_h} \Big) \int_\RR \|\partial_3 v_l(.,x_3)\|_{L^4_h}\, \|\psi_l(.,x_3)\|_{L^4_h} \, dx_3.
\end{align*}
Furthermore, since $\nabla\, \cdot\, \varphi=0$, we deduce that $\partial_3 \varphi_3(x_h,x_3) = - {\rm div }\varphi_h(x_h,x_3):= 
-\big[ \partial_1 \varphi_1(x_h,x_3)
+ \partial_2 \varphi_2(x_h,x_3)\big] $. Therefore, the Cauchy-Schwarz inequality with respect to the Lebesgue measure on $\RR^3$ yields
for almost every $t\in [0,T]$ and $x_3\in \RR$:
\begin{align*}  
|\varphi_3(.,x_3)|_{L^2_h}^2 &= 2\int_{-\infty}^{x_3} \int_{\RR^2} \varphi_3(x_h,z)\, \partial_z \varphi_3(x_3,z)\, dx_h\, dz \nonumber \\
& =-2 \int_{-\infty}^{x_3} \int_{\RR^2} \varphi_3(x_h,z)\, {\rm div }\varphi_h(x_h,z)\, dx_h\, dz \leq 2 |\nabla_h \varphi |_{L^2} \, |\varphi|_{L^2}.
\end{align*}
Plugging the above upper estimate, using again the Gagliardo-Nirenberg inequality \eqref{G-N_2D} for
 $\phi = \partial_3 v_l(.,x_3)$
and $\phi=\psi_l(.,x_3)$,  using the H\"older inequality with respect to the Lebesgue measure $dx_h$ 
we obtain:
\begin{align}   \label{maj_J2}  
|J_2|&\leq C \sum_{l=1}^3  |\nabla_h \varphi|_{L^2}^{\frac{1}{2}} \, | \varphi |_{L^2}^{\frac{1}{2}}  
\int_\RR |\nabla_h \partial_3 v_l(.,x_3)|_{L^2_h}^{\frac{1}{2}}\,  |\partial_3 v_l(.,x_3)|_{L^2_h}^{\frac{1}{2}} \nonumber \\
&\qquad \qquad \times 
|\nabla_h \psi_l(t,.,x_3)|_{L^2_h}^{\frac{1}{2}}\, |\psi_l(t,.,x_3)|_{L^2_h}^{\frac{1}{2}}\, dx_3 \nonumber \\
&\leq C\, |\nabla_h \varphi|_{L^2}^{\frac{1}{2}} \, | \varphi |_{L^2}^{\frac{1}{2}} \, |\nabla_h \partial_3 v|_{L^2}^{\frac{1}{2}}\, | \partial_3 v|_{L^2}^{\frac{1}{2}}\, 
 |\nabla_h \psi|_{L^2}^{\frac{1}{2}} \, | \psi |_{L^2}^{\frac{1}{2}}  \nonumber \\
&\leq C\, \|v\|_{1,1} \,|\nabla_h  \varphi |_{L^2}^{\frac{1}{2}}\, |\nabla_h  \psi|_{L^2}^{\frac{1}{2}} \, |\varphi |_{L^2}^{\frac{1}{2}}\, |\psi|_{L^2}^{\frac{1}{2}} . 
\end{align}
The upper estimates \eqref{J1-J2}, \eqref{maj_J1} and \eqref{maj_J2} imply the existence of a positive constant $C$ such that 
\begin{equation} \label{maj_B}
\big| \big\langle B(\varphi,\psi), v \big\rangle \big| \leq 
 C\, \|v\|_{1,1} \,|\nabla_h  \varphi |_{L^2}^{\frac{1}{2}}\, |\nabla_h  \psi|_{L^2}^{\frac{1}{2}} \, |\varphi |_{L^2}^{\frac{1}{2}}\, |\psi|_{L^2}^{\frac{1}{2}} . 
\end{equation}
Let $u\in L^\infty(0,T;H) \cap L^2(0,T;\tilde{H}^{1,0})$ and   $v\in L^\infty(0,T;H) \cap L^2(0,T;\tilde{H}^{1,1})$. 
Since for almost every $t\in [0,T]$ we have $u(t,.)\in \tilde{H}^{0,1}$ and $v(t,.)\in \tilde{H}^{1,1}$, using \eqref{maj_B} for
$\varphi = \psi = u(t)$ and  H\"older's inequality with respect to the Lebesgue measure on $[0,T]$, we obtain
\begin{align}  \label{maj_Bt}
\int_0^T \big| \big\langle B(u(t)) , v(t) \big\rangle \big|\, dt  
 \leq &\,  C\,  \|v\|_{L^2(0,T;\tilde{H}^{1,1})} \Big( \int_0^T |\nabla_h u(t,.)|_{L^2}^2 \, |u(t,.)|_{L^2}^2\, dt \Big)^{\frac{1}{2}}
 \nonumber \\
 \leq & \, C \, \|v\|_{L^2(0,T;\tilde{H}^{1,1})}\, \esssup_{t\in [0,T]} |u(t)|_{L^2}\, \Big(\int_0^T |\nabla_h u(t)|_{L^2}^2 dt\Big)^{\frac{1}{2}}. 
 \end{align}
This concludes the proof of \eqref{majB_1}. 

 Expanding $B(u(t))-B(v(t))$ and using the antisymmetry property \eqref{antisymetry} we deduce that
\[ \big\langle B(u(t,.)) - B(v(t,.))\, , (u-v)(t,.) \big\rangle = \big\langle B\big((u-v)(t,.), v(t,.)\big)\, , (u-v)(t,.) \big\rangle \, .
\]
Using once more the antisymmetry and the upper estimate \eqref{maj_B}  with 
 $\varphi = \psi = (u-v)(t)$, we conclude the proof of \eqref{maj_B-B}.
Integrating \eqref{maj_B-B} on $[0,T]$ and using the Cauchy Schwarz inequality, we deduce \eqref{maj_B-B_1}.
\end{proof}
Using H\"older's inequality with respect to the expected value in the upper estimates of Lemma \ref{lemma_B_1}, we deduce the following  
 analog for stochastic  processes. 
\begin{lemma} \label{lem_B_sto}
Let $u\in L^4\big(\Omega; L^\infty(0,T;H)\big)\cap L^4\big(\Omega;L^2(0,T;\tilde{H}^{1,0})\big) $  and
$v\in L^4\big(\Omega; L^\infty(0,T;H)\big)\cap L^4\big(\Omega;L^2(0,T;\tilde{H}^{1,1}) \big)$. Then
\begin{align}  \label{majB_2} 
\EE\int_0^T & \big| \big\langle B(u(t)),v(t) \big\rangle\big|   dt \leq  C\, \|v\|_{L^4(\Omega;L^2(0,T;\tilde{H}^{1,1})) }
\nonumber \\
&\qquad\qquad \times 
\|u\|_{L^4(\Omega;L^\infty(0,T;H))}\, \Big( \EE\Big[ \int_0^T |\nabla_h u(t)|_{L^2}^2 dt\Big]^2\Big)^{\frac{1}{4}}. \\
 \EE\int_0^T & \big| \big\langle  B(u(t))-B(v(t)) , (u-v)(t) \big\rangle \big| dt  \leq  C\, \|v\|_{L^4(\Omega;L^2(0,T;\tilde{ H}^{1,1})) }
\nonumber \\
&\qquad\qquad \times 
\|u-v\|_{L^4(\Omega;L^\infty(0,T;H))}\, \Big( \EE\Big[ \int_0^T |\nabla_h (u-v)(t)|_{L^2}^2 dt\Big]^2\Big)^{\frac{1}{4}}.
 \label{maj_B-B_2}
\end{align} 
\end{lemma}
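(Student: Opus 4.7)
The plan is to lift the deterministic pathwise bounds \eqref{majB_1} and \eqref{maj_B-B_1} of Lemma \ref{lemma_B_1} to the probabilistic setting by applying H\"older's inequality on $(\Omega,\PP)$. There is no substantive obstacle here: the pathwise estimates already have the correct tripartite structure, and one only has to match the exponents.

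For \eqref{majB_2}, I would first observe that the hypotheses on $u$ and $v$ ensure that, for $\PP$-almost every $\omega$, the paths $u(\omega,\cdot)$ and $v(\omega,\cdot)$ satisfy the deterministic assumptions of Lemma \ref{lemma_B_1}. Consequently \eqref{majB_1} holds $\PP$-a.s.\ in the form
\[
\int_0^T \big|\langle B(u(t)),v(t)\rangle\big|\,dt \;\leq\; C\, X\,Y\,Z,
\]
where I set
\[
X:=\Big(\int_0^T \|v(t)\|_{1,1}^2\,dt\Big)^{1/2},\qquad
Y:=\esssup_{t\in[0,T]} |u(t)|_{L^2},\qquad
Z:=\Big(\int_0^T |\nabla_h u(t)|_{L^2}^2\,dt\Big)^{1/2}.
\]
Taking $\EE$ on both sides and applying H\"older's inequality in $\omega$ with exponents $(4,4,2)$, which satisfy $\tfrac14+\tfrac14+\tfrac12=1$, yields $\EE[XYZ]\leq \|X\|_{L^4(\Omega)}\,\|Y\|_{L^4(\Omega)}\,\|Z\|_{L^2(\Omega)}$. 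Reading off the three factors, $\|X\|_{L^4(\Omega)}=\|v\|_{L^4(\Omega;L^2(0,T;\tilde H^{1,1}))}$, $\|Y\|_{L^4(\Omega)}=\|u\|_{L^4(\Omega;L^\infty(0,T;H))}$, while $\|Z\|_{L^2(\Omega)}=\bigl(\EE\int_0^T|\nabla_h u(t)|_{L^2}^2\,dt\bigr)^{1/2}$, the last quantity being the same as the final factor of \eqref{majB_2} after rewriting $(\EE[\cdot])^{1/2}=((\EE[\cdot])^2)^{1/4}$.

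The proof of \eqref{maj_B-B_2} is entirely parallel: one starts from \eqref{maj_B-B_1}, which has the same tripartite structure with $u-v$ replacing $u$ in the factors involving the essential supremum and the horizontal gradient, and then applies the same $(4,4,2)$ H\"older splitting on $\Omega$. The a.s.\ integrability needed to invoke \eqref{maj_B-B_1} follows from the hypotheses, since $u-v$ inherits the required regularity from $u$ and $v$; in particular $u-v\in L^4(\Omega;L^\infty(0,T;H))\cap L^4(\Omega;L^2(0,T;\tilde H^{1,0}))$, which is all that is needed.
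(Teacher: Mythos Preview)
Your proof is correct and is exactly the paper's approach: apply the pathwise estimates \eqref{majB_1} and \eqref{maj_B-B_1} for almost every $\omega$, then use H\"older's inequality in the probability variable. One minor point about the last factor in \eqref{majB_2}: in view of the hypothesis $u\in L^4\bigl(\Omega;L^2(0,T;\tilde H^{1,0})\bigr)$ and the way the estimate is used in \eqref{maj_E_F}, the intended reading is $\bigl(\EE\bigl[\bigl(\int_0^T|\nabla_h u(t)|_{L^2}^2\,dt\bigr)^2\bigr]\bigr)^{1/4}=\|Z\|_{L^4(\Omega)}$ rather than $\bigl((\EE[\cdot])^2\bigr)^{1/4}=\|Z\|_{L^2(\Omega)}$, but since $\|Z\|_{L^2(\Omega)}\le\|Z\|_{L^4(\Omega)}$ your $(4,4,2)$ splitting yields the stated bound a fortiori.
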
  
The following lemma proves upper estimates for the third partial derivatives of the bilinear term; it is essentially contained in \cite{BTZ}.
 This results shows the crucial
role of the other non linear term $|u|^{2\alpha} u$ of \eqref{saNS} in the control of the  partial derivative $\partial_3$ of the bilinear term. 
\begin{lemma} \label{lem_delta_3_B}
There exists a  positive constant $C$  such that for any $\alpha\in (1,\infty)$ there exists $C_\alpha>0$,
 $\epsilon_0,\epsilon_1>0$,  $s\in [0,T]$ and  $u\in X$:
\begin{align}  \label{delta_3_B}
\Big|  \big\langle \partial_3 B(u(s)), \partial_3 u(s)\big\rangle \Big| \leq &\; C\, \Big[ \epsilon_0  |\nabla_h\partial_3 u(s)|_{L^2}^2\, 
+ \frac{\epsilon_1}{4\epsilon_0}  \big| |u(s)|^\alpha \, \partial_3 u(s)\big|_{L^2}^2  
\nonumber \\
&\qquad + {C_\alpha}\, {\epsilon_0}^{-1} \, \epsilon_1^{-\frac{1}{\alpha-1}}\,  |\partial_3 u(s)|_{L^2}^2  \Big].
\end{align}
\end{lemma}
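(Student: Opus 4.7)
The plan is to reduce $|\langle \partial_3 B(u), \partial_3 u\rangle|$ to an integral of the form $C\int_{\RR^3}|u|\,|\partial_3 u|\,|\nabla_h \partial_3 u|\, dx$, and then apply Young's inequality twice: once to separate the $|\nabla_h\partial_3 u|_{L^2}^2$ factor, and once more to convert the surviving $|u|^2$ into an $\epsilon_1|u|^{2\alpha}$ plus a remainder controlled by $|\partial_3 u|_{L^2}^2$. The hypothesis $\alpha>1$ enters only in this last Young step, which is precisely how the Brinkman--Forchheimer term $|u|^{2\alpha}u$ of \eqref{saNS} takes over the regularisation that the missing vertical viscosity does not provide.

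First I would expand $\partial_3 B(u) = B(\partial_3 u, u) + B(u, \partial_3 u)$ and kill the second piece tested against $\partial_3 u$ by means of the antisymmetry \eqref{antisymetry} (which applies because $\nabla\cdot u = \nabla\cdot(\partial_3 u) = 0$). This gives
\[
\langle \partial_3 B(u(s)), \partial_3 u(s)\rangle = \sum_{i,j=1}^{3} \int_{\RR^3} (\partial_3 u_i)(\partial_i u_j)(\partial_3 u_j)\, dx.
\]
Splitting the sum over $i$ into $i\in\{1,2\}$ and $i=3$, I would rewrite the vertical contribution $\int(\partial_3 u_3)|\partial_3 u|^2\, dx$ using $\partial_3 u_3 = -{\rm div}_h u_h$ (from \eqref{divergence}) and integrating by parts in $x_k$, getting $2\sum_{k=1,2,\,j} \int u_k (\partial_3 u_j)(\partial_k \partial_3 u_j)\, dx$. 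For the horizontal contribution, integration by parts in $x_k$ to move $\partial_k$ off $u_j$ produces two pieces: one with $\partial_k \partial_3 u_j$, already of the required form, and one with $\sum_{k=1,2}\partial_k \partial_3 u_k$, which by $\nabla\cdot(\partial_3 u)=0$ equals $-\partial_3^2 u_3 = {\rm div}_h(\partial_3 u_h)$, and is hence again bounded in modulus by $|\nabla_h \partial_3 u|$. Collecting the three pieces,
\[
\bigl|\langle \partial_3 B(u), \partial_3 u\rangle\bigr| \leq C \int_{\RR^3} |u|\,|\partial_3 u|\,|\nabla_h \partial_3 u|\, dx.
\]

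Applying the pointwise Young inequality $ab\leq \epsilon_0 a^2 + (4\epsilon_0)^{-1} b^2$ with $a=|\nabla_h \partial_3 u|$, $b=|u||\partial_3 u|$ and integrating, I would obtain $C \epsilon_0 |\nabla_h \partial_3 u|_{L^2}^2 + C(4\epsilon_0)^{-1} \int|u|^2|\partial_3 u|^2\, dx$. Since $\alpha>1$, the Young inequality with conjugate exponents $(\alpha, \alpha/(\alpha-1))$ and weight $\epsilon_1$ gives $|u|^2 \leq \epsilon_1 |u|^{2\alpha} + C_\alpha \epsilon_1^{-1/(\alpha-1)}$; multiplying by $|\partial_3 u|^2$, integrating, and combining with the previous bound produces exactly the three terms in the statement.

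The main obstacle is the $i=3$ contribution, where the factor $\partial_i u_j = \partial_3 u_j$ contains no horizontal derivative at all, so a naive integration by parts would manufacture the uncontrolled second vertical derivative $\partial_3^2 u$. The key observation rescuing the argument is that incompressibility converts every such vertical derivative into a horizontal derivative of $u$ or $\partial_3 u$: $\partial_3 u_3 = -{\rm div}_h u_h$ and $\partial_3^2 u_3 = -\sum_{k=1,2}\partial_k \partial_3 u_k$. Only after this algebraic reduction does the Brinkman--Forchheimer weight $|u|^{\alpha}$ appear, via the second Young inequality, to absorb the residual $|u|^2$ factor.
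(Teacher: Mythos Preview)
Your proof is correct and follows essentially the same route as the paper: the same antisymmetry/integration-by-parts/divergence-free reduction to integrals of type $\int |u|\,|\partial_3 u|\,|\nabla_h\partial_3 u|\,dx$, followed by the same two-step Young inequality. The only cosmetic difference is that the paper packages the H\"older--Young step as a single trilinear inequality \eqref{fgh} applied term by term, whereas you first collapse everything to one pointwise bound and then apply Young twice; the content is identical.
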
 
\begin{proof}
We briefly sketch the proof in order to be self contained. Since $\mbox{\rm div}_h \partial_3 u(s)= \partial_3 \mbox{\rm div}_h u(s)$,
the antisymmetry \eqref{antisymetry} yields $\big\langle B\big(u(s), \partial_3 u(s)\big)\,,\,\partial_3 u(s)\big\rangle=0$; hence for $s\in [0,T]$:
\[ 
\big\langle \partial_3 B(u(s)),\partial_3 u(s)\big\rangle = \sum_{k,l=1}^3 \int_{\RR^3 }\partial_3 u_k(s,x) \partial_k u_l(s,x) \partial_3 u_l(s,x) dx 
:= \bar{J}_1(s) +\bar{J}_2(s),
\] 
where integration by parts with respect to $\partial_k$, $k=1,2$ yields
\begin{align*}
\bar{J}_1(s)=&-\sum_{k=1}^2\sum_{l=1}^3 \int_{\RR^3} \partial_k \partial_3 u_k(s,x)\, u_l(s,x)\, \partial_3 u_l(s,x)\, dx \\
&- \sum_{k=1}^2\sum_{l=1}^3 \int_{\RR^3} \partial_3 u_k(s,x)\, u_l(s,x)\, \partial_k \partial_3 u_l(s,x)\, dx, \\
\bar{J}_2(s)=&\sum_{l=1}^3 \int_{\RR^3}  \partial_3 u_3(s,x)\, \big(\partial_3 u_l(s,x)\big)^2 \, dx =-\sum_{l=1}^3 \int_{\RR^3}   
\mbox{\rm div}_h u_h(s,x)  \big(\partial_3 u_l(s,x)\big)^2 \, dx;
\end{align*}
 the last identity comes from the fact that $\nabla\cdot u(s)=0$. Since $\alpha>1$, the H\"older and Young inequalities imply that for functions
 $f,g,h:\RR^3\to \RR$, $\epsilon_0>0$ and then $\epsilon_1>0$, we have for some $C_\alpha >0$:
 \begin{align} \label{fgh}
\Big| \int_{\RR^3} f(x) g(x) h(x) dx\Big| &\leq \big\| \, |f| \, |g|^{\frac{1}{\alpha}}\, \big\|_{L^{2\alpha}}\,
 \big\||g|^{1-\frac{1}{\alpha}} \big\|_{L^{\frac{2\alpha}{\alpha-1}}}\,
|h|_{L^2}  \nonumber \\
 &\leq  \epsilon_0 |h|_{L^2}^2 + 
 \frac{\epsilon_1}{4\epsilon_0}  \big| \, |f|^\alpha \, g\big|_{L^2}^2 
+ C_\alpha \epsilon_0^{-1}\, \epsilon_1^{-\frac{1}{\alpha-1} } |g|_{L^2}^2 . 
 \end{align}
 Using this inequality for $f=u_l(s)$, $g=\partial_3 u_l(s)$ and $h=\partial_k\partial_3 u_k(s)$ (resp. $g=\partial_3 u_k(s)$, $h=\partial_k \partial_3 u_l(s)$)
 we deduce the existence of $C>0$ such that for any $\alpha>1$, $\epsilon_0, \epsilon_1>0$ and some constant $C_\alpha>0$:
 \[ 
 |\bar{J}_1(s)| \leq C\Big[  \epsilon_0  \big| \, \nabla_h \partial_3 u(s)|_{L^2}^2 + 
 \frac{\epsilon_1}{4\epsilon_0}  \big| \, |u(s)|^\alpha\,  \partial_3 u(s)\big|_{L^2}^2 
+ C_\alpha \epsilon_0^{-1}\, \epsilon_1^{-\frac{1}{\alpha-1} } |\partial_3 u(s)|_{L^2}^2 \Big]\, . 
 \] 
 Integration by parts implies that $\bar{J}_2(s)=2\sum_{k=1}^2\sum_{l=1}^3 \int_{\RR^3} u_k(s,x) \partial_k\partial_3 u_l(s,x) \partial_3 u_l(s,x)dx$. 
 Using \eqref{fgh} with $f=u_k(s)$, $g=\partial_3 u_l(s)$ and $h=\partial_k\partial_3 u_l(s)$, we deduce the existence of $C>0$ such that
  for any $\alpha>1$, $\epsilon_0, \epsilon_1>0$ and $C_\alpha >0$:
 \[ 
 |\bar{J}_2(s)| \leq C\Big[  \epsilon_0  \big| \, \nabla_h \partial_3 u(s)|_{L^2}^2 + 
 \frac{\epsilon_1}{4\epsilon_0}  \big| \, |u(s)|^\alpha\,  \partial_3 u(s)\big|_{L^2}^2 
+ C_\alpha \epsilon_0^{-1}\, \epsilon_1^{-\frac{1}{\alpha-1} } |\partial_3 u(s)|_{L^2}^2 \Big]  \, . 
 \] 
The upper estimates of $\bar{J}_1(s)$ and $\bar{J}_2(s)$ conclude the proof. 
\end{proof}

For any regular enough  function $\varphi:\RR^3\to \RR^3$,  let $F(\varphi)$ be the  function defined by
\begin{equation}\label{defF}
F(\varphi) = \nu \Delta_h \varphi - B(  \varphi) - a\, |\varphi |^{2\alpha} \varphi.
\end{equation}
The following lemma proves that for $u\in X$ (resp. $u\in {\mathcal X}$), $F(u)$ belongs to the dual space of $L^2(0,T;\tilde{H}^{1,1})\cap
L^{2(\alpha +1)}((0,T)\times \RR^3)$ (resp. to the dual space of 
$L^4(\Omega; L^2(0,T;\tilde{H}^{1,1}))\cap L^{2(\alpha+1)}(\Omega_T
\times \RR^3)$). 
\begin{lemma}  \label{lemma_F}
(i) Let $u\in X$ and $v\in L^2(0,T;\tilde{H}^{1,1})\cap L^{2(\alpha +1)}((0,T)\times \RR^3;\RR^3)$; then  
\begin{align}  \label{majF_v}
  \int_0^T  \big| \big\langle &F(u(t,.)) , v(t,.) \big\rangle \big| dt  \leq C \, \Big[ \|v\|_{L^2(0,T; \tilde{H}^{1,0})}  \| u\|_{L^2(0,T; \tilde{H}^{1,0})}
 + \|v\|_{L^{2(\alpha +1)}( [0,T] \times \RR^3 ) }
 \nonumber \\
 & \times     \|u\|_{L^{2(\alpha +1)}( (0,T)  \times \RR^3)}^{2\alpha +1}  
 + \|v\|_{L^2(0,T;\tilde{H}^{1,1})}  \sup_{t\in [0,T]} |u(t)|_{L^2} \, \Big(\int_0^T \! |\nabla_h u(t)|_{L^2}^2 dt\Big)^{\frac{1}{2}}      \Big]. 
 \end{align}
(ii) Let $u \in {\mathcal X}$ and $v\in L^4(\Omega; L^2(0,T;\tilde{H}^{1,1}))\cap L^{2(\alpha+1)}(\Omega_T \times \RR^3)$. Then
\begin{align} \label{maj_E_F}
  \EE& \int_0^T \big|  \big\langle F(u(t,.)) , v(t,.) \big\rangle \big| dt  \leq C \, \Big[ \|v\|_{L^2(\Omega_T;\tilde{H}^{1,0})}  \| u\|_{L^2(\Omega_T; \tilde{H}^{1,0})}
 + \|v\|_{L^{2(\alpha +1)}( \Omega_T \times \RR^3 ) }
 \nonumber \\
 & \times     \|u\|_{L^{2(\alpha +1)}(\Omega_T \times \RR^3)}^{2\alpha +1}  
 +  \|v\|_{L^4(\Omega; L^2(0,T;\tilde{H}^{1,1}))}  \|u\|_{L^4(\Omega;L^\infty(0,T;H))} \|u\|_{L^4(\Omega;L^2(0,T;\tilde{H}^{1,0}))} \Big].
  \end{align} 
\end{lemma}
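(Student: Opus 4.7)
The plan is to bound the three pieces of $F(u) = \nu \Delta_h u - B(u) - a|u|^{2\alpha} u$ separately in duality against $v$, and then, for part (ii), take expectations and use H\"older's inequality on $\Omega$ so that the exponents match those already used in space-time.

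For the linear viscous piece I would integrate by parts in the horizontal directions to get $\langle \Delta_h u, v\rangle = -\sum_{j=1}^3 \int_{\mathbb{R}^3} \nabla_h u_j \cdot \nabla_h v_j\, dx$, then apply Cauchy-Schwarz on $\mathbb{R}^3$ and on $[0,T]$ to obtain the $\|u\|_{L^2(0,T;\tilde H^{1,0})} \|v\|_{L^2(0,T;\tilde H^{1,0})}$ contribution. For the Brinkman-Forchheimer piece I would use H\"older's inequality on $\mathbb{R}^3$ with conjugate exponents $\tfrac{2(\alpha+1)}{2\alpha+1}$ and $2(\alpha+1)$ to get
\[
\Big|\int_{\mathbb{R}^3} |u|^{2\alpha} u \cdot v\, dx\Big| \leq \|u\|_{L^{2(\alpha+1)}}^{2\alpha+1}\, \|v\|_{L^{2(\alpha+1)}},
\]
and then apply H\"older in time with the same pair of exponents to obtain the $\|u\|_{L^{2(\alpha+1)}((0,T)\times \mathbb{R}^3)}^{2\alpha+1}\|v\|_{L^{2(\alpha+1)}((0,T)\times \mathbb{R}^3)}$ term. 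For the convective piece the bound \eqref{majB_1} of Lemma \ref{lemma_B_1} is exactly what is needed, since the hypothesis $u \in X$ provides $u\in L^\infty(0,T;H)\cap L^2(0,T;\tilde H^{1,0})$ and $v\in L^2(0,T;\tilde H^{1,1})$. Summing the three bounds yields \eqref{majF_v}.

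For part (ii), I would take the $\mathbb{P}$-expectation of the deterministic pointwise bound (before raising to powers) and then apply H\"older's inequality on $\Omega$, choosing the exponents so that everything factors into the norms appearing in the statement. Concretely, for the viscous term Cauchy-Schwarz on $\Omega$ gives directly $\|u\|_{L^2(\Omega_T;\tilde H^{1,0})}\|v\|_{L^2(\Omega_T;\tilde H^{1,0})}$. For the Brinkman-Forchheimer term I would again use H\"older on $\Omega$ with exponents $\tfrac{2(\alpha+1)}{2\alpha+1}$ and $2(\alpha+1)$, which fits the space-time H\"older exponents and yields the product $\|u\|_{L^{2(\alpha+1)}(\Omega_T\times\mathbb{R}^3)}^{2\alpha+1} \|v\|_{L^{2(\alpha+1)}(\Omega_T\times\mathbb{R}^3)}$. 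For the convective term I would invoke the stochastic bound \eqref{majB_2} of Lemma \ref{lem_B_sto} and then note that $|\nabla_h u(t)|_{L^2}^2 \leq \|u(t)\|_{1,0}^2$, so that
\[
\Big(\mathbb{E}\Big[\int_0^T |\nabla_h u(t)|_{L^2}^2\, dt\Big]^2\Big)^{\frac{1}{4}} \leq \|u\|_{L^4(\Omega; L^2(0,T;\tilde H^{1,0}))},
\]
which converts the bound into the form displayed in \eqref{maj_E_F}.

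The only nontrivial step is the bookkeeping of exponents for the Brinkman-Forchheimer term: one must verify that the H\"older pair $(\tfrac{2(\alpha+1)}{2\alpha+1}, 2(\alpha+1))$ used on $\mathbb{R}^3$, on $[0,T]$, and on $\Omega$ composes cleanly into the $L^{2(\alpha+1)}(\Omega_T\times\mathbb{R}^3)$ norms, and that $2\alpha+1$ copies of $u$ are absorbed into the $L^{2(\alpha+1)}$ norm raised to the power $2\alpha+1$. Aside from this mild exponent tracking, the proof is just the sum of three routine duality estimates, each handled by an already-available lemma or a direct H\"older argument, so no further obstacle is expected.
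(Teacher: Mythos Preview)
Your proposal is correct and follows essentially the same approach as the paper: integrate by parts and apply Cauchy--Schwarz for the viscous term, use H\"older with the conjugate pair $\big(\tfrac{2(\alpha+1)}{2\alpha+1},\,2(\alpha+1)\big)$ for the Brinkman--Forchheimer term, and invoke \eqref{majB_1} for the bilinear term; then for part (ii) take expectations and apply H\"older on $\Omega$ (the paper phrases this last step as ``apply part (i) a.s.\ and then Cauchy--Schwarz and H\"older in expectation'', which amounts to exactly your use of \eqref{majB_2}). The only cosmetic difference is that the paper applies H\"older directly on the product space $(0,T)\times\RR^3$ for the polynomial term, whereas you split it into space then time, but by Fubini these give the same bound.
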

\begin{proof}
(i) Integration by parts and the Cauchy-Schwarz inequality with respect to $dt\otimes dx$ yield
\begin{align}
\Big|  \nu \int_0^T \langle \Delta_h u(t,.), v(t,.)\rangle dt \Big| &= \Big| -  \nu \int_0^T \int_{\RR^3} \nabla_h u(t,x)\;  \nabla_h v(t,x)\, dx  dt \Big| 
\nonumber \\
&\leq \nu \; \|u\|_{L^2(0,T; \tilde{H}^{1,0})} \|v\|_{L^2(0,T; \tilde{H}^{1,0})}.  \label{majo_F_Deltah}
\end{align} 
Note that  $2\alpha +2$ and $\frac{2\alpha +2}{2\alpha +1}$ are conjugate H\"older exponents.
Since $u\in L^{2(\alpha +1)}((0,T)\times \RR^3)$, the function $|u|^{2\alpha} u$ belongs to $L^{\frac{2(\alpha+1)}{2\alpha+1}}((0,T)\times \RR^3)$
and 
\begin{align}   \label{majpoly}
\Big| \int_0^T \int_{\RR^3} |u(t,x)|^{2\alpha}\, u(t,x)\, v(t,x)\, dx\, dt &\leq 
\big\| |u|^{2\alpha} u\big\|_{L^{\frac{2(\alpha +1)}{2\alpha +1}}((0,T)\times \RR^3)}
\| v\|_{L^{2(\alpha +1)}((0,T)\times \RR^3)}  \nonumber \\
&\leq \|u\|_{L^{2(\alpha +1)}((0,T)\times \RR^3)}^{2\alpha +1} \, \| v\|_{L^{2(\alpha +1)}((0,T)\times \RR^3)}.
\end{align}
The inequalities \eqref{majo_F_Deltah}, \eqref{majB_1} and \eqref{majpoly} conclude the proof of \eqref{majF_v}.

(ii) Let $u\in {\mathcal X}$ and $v\in L^4(\Omega; L^2(0,T;\tilde{H}^{1,1}))\cap L^{2(\alpha+1)}(\Omega_T \times \RR^3)$. Then a.s. we may apply
part (i) to $u(t)(\omega)$ and $v(t)(\omega)$. The Cauchy Schwarz and H\"older inequalities with  respect to the expectation conclude the proof.
\end{proof}

To prove uniqueness of the solution, we will need the following lemma which provides an upper estimate of 
$ \big\langle  F(u(t,.))-F(v(t,.)) , u(t,.)-v(t,.) \big\rangle  $ for $u,v \in X$ and $t\in [0,T]$. 
 
\begin{lemma}  \label{F-F}
There exists a positive constant $\kappa$ depending on $\alpha$,  and for any $\eta\in (0,\nu)$ a positive constant
$C_\eta$ such that for  $u,v \in \tilde{H}^{1,1} \cap L^{2(\alpha +1)}(\RR^3)$; 
\begin{align}  \label{upper_F-F_t}
  \big\langle  &F(u)-F(v) , u-v \big\rangle  \leq  -\eta \, |\nabla_h (u-v) |_{L^2}^2 
  \nonumber \\
&+ C_\eta\,  \| v\|_{1,1}^2 |u-v|_{L^2}^2 
    - a\, \kappa \, \big| \big( |u| + |v|\big)^{ \alpha}\, (u-v) \big|_{L^2}^2.
\end{align} 
\end{lemma}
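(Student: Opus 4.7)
Decompose $F(u)-F(v)$ according to \eqref{defF} into the three terms
$\nu\Delta_h(u-v)$, $-[B(u)-B(v)]$, and $-a[|u|^{2\alpha}u-|v|^{2\alpha}v]$, and bound
$\langle F(u)-F(v),u-v\rangle$ term by term.

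The viscous piece is immediate: integration by parts gives
$\nu\langle \Delta_h(u-v),u-v\rangle = -\nu|\nabla_h(u-v)|_{L^2}^2$. For the bilinear piece, I would apply the already-proved estimate \eqref{maj_B-B} pointwise in $t$, namely
$|\langle B(u)-B(v),u-v\rangle|\le C\|v\|_{1,1}\,|\nabla_h(u-v)|_{L^2}\,|u-v|_{L^2}$, and then invoke Young's inequality $ab\le \delta a^2+\frac{1}{4\delta}b^2$ with $\delta=\nu-\eta$. This absorbs a $(\nu-\eta)|\nabla_h(u-v)|_{L^2}^2$ piece into the viscous term and leaves the dissipation $-\eta|\nabla_h(u-v)|_{L^2}^2$, at the cost of a term of the form $C_\eta\|v\|_{1,1}^2|u-v|_{L^2}^2$.

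The nontrivial piece is the Brinkman--Forchheimer contribution. Here the plan is to use the classical pointwise monotonicity inequality: for every $p\ge 2$ there exists $\kappa_p>0$ such that, for all $\xi,\zeta\in\RR^3$,
\begin{equation*}
\bigl(|\xi|^{p-2}\xi-|\zeta|^{p-2}\zeta\bigr)\cdot(\xi-\zeta)\;\ge\;\kappa_p\,(|\xi|+|\zeta|)^{p-2}|\xi-\zeta|^2.
\end{equation*}
Applied with $p=2\alpha+2\ge 4$ (since $\alpha>1$), this gives pointwise
$(|u|^{2\alpha}u-|v|^{2\alpha}v)\cdot(u-v)\ge \kappa\,(|u|+|v|)^{2\alpha}|u-v|^2$,
and integrating over $\RR^3$ yields exactly
$-a\langle |u|^{2\alpha}u-|v|^{2\alpha}v,\,u-v\rangle\le -a\kappa\,\big|(|u|+|v|)^\alpha(u-v)\big|_{L^2}^2$. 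Summing the three bounds produces \eqref{upper_F-F_t}.

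The main obstacle is the pointwise monotonicity inequality for the vector-valued nonlinearity $\xi\mapsto|\xi|^{2\alpha}\xi$: the scalar version is elementary, but in the vector setting one has to be careful, and a standard proof goes by writing the difference as $\int_0^1 \frac{d}{dt}\bigl(|t\xi+(1-t)\zeta|^{2\alpha}[t\xi+(1-t)\zeta]\bigr)\,dt$, using that the Jacobian $D(|\eta|^{2\alpha}\eta)=|\eta|^{2\alpha}I+2\alpha|\eta|^{2\alpha-2}\eta\otimes\eta$ is positive semidefinite with smallest eigenvalue $|\eta|^{2\alpha}$, and finally comparing $|t\xi+(1-t)\zeta|$ with $|\xi|+|\zeta|$ up to an $\alpha$-dependent constant. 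Since the remaining estimates (viscous and bilinear) are essentially routine applications of results already established in the excerpt, this monotonicity bound is really the only substantive input.
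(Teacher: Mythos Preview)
Your proposal is correct and follows essentially the same route as the paper: decompose $F(u)-F(v)$ into the viscous, bilinear, and Brinkman--Forchheimer parts, treat the first by integration by parts, the second via \eqref{maj_B-B} combined with Young's inequality (splitting off $(\nu-\eta)|\nabla_h(u-v)|_{L^2}^2$), and the third via the pointwise monotonicity inequality for $\xi\mapsto|\xi|^{2\alpha}\xi$. The only difference is that the paper simply cites this monotonicity bound from \cite{BL} (see also \cite{MTT}) rather than sketching a proof; your outline via the integral form $\int_0^1 D(|\cdot|^{2\alpha}\cdot)$ and the Jacobian's smallest eigenvalue is a standard way to obtain it.
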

\begin{proof} 
Integration by parts implies that
\begin{equation}  \label{F-F_Deltah}
\nu \,  \big\langle \Delta_h(u-v) \, , \, u-v \big\rangle = - \nu \, |\nabla_h (u-v)|_{L^2}^2.
\end{equation}
It is well-known  (see \cite{BL}; see also  \cite{MTT} where it is used)  that there exists 
a constant $\kappa$ depending on $\alpha$ such that
\begin{align*}
 \kappa |u(x)-v(x)|^2 & \big( |u(x)| + |v(x)|\big)^{2\alpha} \\
&  \leq  \big( |u(x)|^{2\alpha} u(x) - |v(x)|^{2\alpha} v(x)\big) \, \cdot \, \big(u(x)-v(x)\big),
\end{align*}
which clearly implies: 
\begin{align}   \label{F-F_polynomial}
 a\int_{\RR^3} & \big( |u(x)|^{2\alpha} u(x) - |v(x)|^{2\alpha} v(x) \big) \, .\, \big( u(x) - v(x)\big) dx \nonumber \\
&\qquad \geq a\, \kappa\, 
\big|  \big( |u| + |v|\big)^{\alpha} ( u-v )  \big|_{L^2}^2.
\end{align} 
Using Young's inequality in \eqref{maj_B-B} we deduce that for any $\eta\in (0,\nu)$ there exists $C_\eta>0$ such that
\[ \big| \langle B(u)-B(v)\, , \, u-v\rangle \big|  \leq (\nu-\eta) |\nabla_h (u-v)|_{L^2}^2 + C_\eta \|v\|_{1,1}^2\, |(u-v)|_{L^2}^2.
\]
This upper estimate, \eqref{F-F_Deltah} and  \eqref{F-F_polynomial}  conclude the proof of \eqref{upper_F-F_t}. 
\end{proof}

\subsection{The stochastic perturbation}  \label{stoch_def}
 We will consider an   external random force   
in  equation  \eqref{saNS} driven by a Wiener process $W$ 
and whose intensity may depend on the solution $u$.

More precisely, let $(e_k, k\geq 1)$ be an orthonormal basis of $H$ whose elements belong to $H^2:=W^{2,2}(\RR^3; \RR^3)$ and
are orthogonal in $\tilde{H}^{0,1}$. For integers $k,l\geq 1$ with $k\neq l$, we deduce that
\[ ( \partial_3^2  e_k,  e_l) = - (\partial_3 e_k, \partial_3 e_l) = -(e_k,e_l)_{0,1} - (e_k,e_l)=0.\]
Therefore, $\partial_3^2 e_k$ is a constant multiple of $e_k$. Let ${\mathcal H}_n= \mbox{\rm span }(e_1, \cdots, e_n)$ and let
$P_n$ (resp. $\tilde{P}_n$) denote the orthogonal projection from $H$ (resp. $\tilde{H}^{0,1}$) to ${\mathcal H}_n$. 
We deduce that for $u\in \tilde{H}^{0,1}$ we have $P_n u = \tilde{P}_n u$. Indeed,  for  $v\in {\mathcal H}_n$, 
we have $\partial_3^2 v \in {\mathcal H}_n$ and for any $u\in \tilde{H}^{0,1}$: 
\[ (P_n u, v) = (u,v), \quad \mbox{\rm and }\;  (\partial_3 P_n u, \partial_3 v) = - (P_n u, \partial_3^2 v) =-(u, \partial_3^2 v) = (\partial_3 u, \partial_3 v).\]
Hence  given $u\in \tilde{H}^{0,1}$, we have $(P_n u, v)_{0,1}=(u,v)_{0,1}$ for any $v\in {\mathcal H}_n$; this
 proves that $P_n$ and $\tilde{P}_n$ coincide on $\tilde{H}^{0,1}$. 

Let $(W(t), t\geq 0)$ be a $\tilde{H}^{0,1}$-valued Wiener process with covariance operator $Q$ on a filtered
probability space $(\Omega, {\mathcal F}, ({\mathcal F}_t),  {\mathbb P})$;  that is $Q$  is a positive operator from $\tilde{H}^{0,1}$
to itself which is trace class, and hence compact. Let $(q_k, k\geq 1)$ be the set of eigenvalues of $Q$ with $\sum_{k\geq 1} q_k<\infty$,
and let $(\psi_k, k\geq 1)$ denote the corresponding eigenfunctions (that is $Q \psi_k = q_k \psi_k)$. 
The process $W$ is Gaussian, has independent
time increments,  and for $s,t\geq 0$, $f,g\in \tilde{H}_{0,1}$,
\[
\EE  \big[ (W(s),f)_{0,1} \big] =0\quad\mbox{and}\quad
\EE \big[  (W(s),f)_{0,1} (W(t),g)_{0,1} \big]  = \big(s\wedge t)\, (Qf,g)_{0,1}.
\]
We also have the following representation
\begin{equation}\label{W-n}
W(t)=\lim_{n\to\infty} W_n(t)\;\mbox{ in }\; L^2(\Om; \tilde{H}^{0,1})\; \mbox{ with }
W_n(t)=\sum_{k=1}^n q^{1/2}_k \beta_k(t) \psi_k,
\end{equation}
where  $\beta_k$ are  standard (scalar) mutually independent Wiener processes and $\psi_k$ are the above eigenfunctions of $Q$. 
For details concerning this Wiener process  we refer  to \cite{PZ92}.

Let $H_0 = Q^{\frac12} \tilde{H}^{0,1}$; then $H_0$ is a
Hilbert space with the scalar product
$$
(\phi, \psi)_0 = (Q^{-\frac12}\phi, Q^{-\frac12}\psi)_{0,1},\; \forall
\phi, \psi \in H_0,
$$
together with the induced norm $|\cdot|_0=\sqrt{(\cdot,
\cdot)_0}$. The embedding $i: H_0 \to  \tilde{H}^{0,1}$ is Hilbert-Schmidt and
hence compact; moreover, $i \; i^* =Q$.
 
Let ${\mathcal L} \equiv L^{(2)}(H_0, H) $  (resp.  $\widetilde{\mathcal L} \equiv L^{(2)}(H_0, \tilde{H}^{0,1}) $  )
 be the space of linear operators $S:H_0\mapsto H$ (resp. $S:H_0\mapsto \tilde{H}^{0,1}$) such that
$SQ^{\frac12}$ is a Hilbert-Schmidt operator  from $\tilde{H}^{0,1}$ to $H $ (resp. from $\tilde{H}^{0,1}$ to itself). 
Clearly, $\widetilde{\mathcal L} \subset {\mathcal L}$. 
Set
\begin{align}\label{LQ-norm}
 |S|_{\mathcal L}^2&=\mbox{\rm trace}_{H}  ([SQ^{1/2}][SQ^{1/2}]^*)=\sum_{k=1}^\infty \|SQ^{1/2}\phi_k\|_{L^2}^2,\\
 |S|_{\widetilde{\mathcal L}}^2&=\mbox{\rm trace}_{\tilde{H}^{0,1}} ([SQ^{1/2}][SQ^{1/2}]^*)
 =\sum_{k=1}^\infty |SQ^{1/2}\phi_k|_{0,1}^2  \label{tilde_LQ-norm}
\end{align}
for any orthonormal basis  $\{\phi_k\}$ in $\tilde{H}^{0,1}$. Let $(\cdot, \cdot)_{\mathcal L}$ and $(\cdot, \cdot)_{\tilde{\mathcal L}}$
denote the associated scalar products. 
\par

The noise intensity of the stochastic perturbation
 $\s: [0, T]\times \tilde{H}^{1,1} \to \widetilde{\mathcal L}$ which we put in \eqref{saNS} satisfies the following classical 
 growth and Lipschitz conditions (i) and (ii). Note that due to the anisotropic feature of our model, we have to impose growth conditions
 both for the $|\, \cdot \, |_{\mathcal L}$ and  $|\, \cdot \, |_{\tilde{\mathcal L}}$ norms. 
 
\par
\noindent \textbf{Condition (C):} 
The diffusion coefficient  $\s  \in C\big([0, T] \times \tilde{H}^{1,1}; \widetilde{\mathcal L}) \big)$ is a linear operator such  that: \\
\indent {\bf (i) Growth condition}
 There exist non negative  constants $K_i$ and $\tilde{K}_i$ 
such that for every $t\in [0,T]$ and $u \in \tilde{H}^{1,1}$:
\begin{align}  |\s(t,u)|^2_{\mathcal L}  
& \leq {K}_0 +{K}_1 |u|_{L^2}^2 + {K}_2 |\nabla_h u|_{L^2}^2 , \label{growth_tilde_LQ}   \\
 |\s(t,u)|^2_{\widetilde{\mathcal L}} &\leq   \tilde{K}_0+  \tilde{K}_1 \|u\|_{0,1}^2+  \tilde{K}_2 \big(  |\nabla_h u |_{L^2}^2 + 
 |\partial_3 \nabla_h u|_{L^2}^2 \big) \label{growth_LQ}  .
 \end{align}
 \indent {\bf (ii) Lipschitz condition}  There exists constants ${L}_1$ and ${L}_2$ such that: 
\[ 
|\s(t,u)-\s(t,v)|^2_{\mathcal L}  
\leq {L}_1 |u-v|_{L^2}^2 + {L}_2 |\nabla_h (u-v)|_{L^2}^2, \quad t\in [0,T] \;  \mbox{\rm and } \;  u,v\in \tilde{H}^{1,1}.
\]

\begin{defn}\label{def-sol}
An   $(\cF_t)$-predictable stochastic process $u(t,\om)$ is called a
{\em  weak solution } in   $C([0,T];H) \cap X$  for the stochastic equation \eqref{saNS}  on $[0, T]$
 with initial condition $u_0$ if
  $u\in  C([0, T]; H) \cap X$  a.s., where $X$ is defined in \eqref{defX},  and $u$ 
satisfies
\begin{align*}
 (u(t), v)&-(u_0, v) + \int_0^t\Big [ - \nu \big\langle u(s),  \Delta_h v \big\rangle
 - \big\langle   B(u(s),v) \, ,\,  u(s)\big\rangle \Big] ds \nonumber \\
 &+a  \int_0^t \! \int_{\RR^3}  \big|u(s,x)\big|^{2\alpha} u(s,x) v(x) dx ds 
 =   \int_0^t \big(\s(s,u(s)) dW(s),v \big),\;\; {\rm a.s.},
\end{align*}
for every test function $v \in H^2(\RR^3)$ and all $t \in [0,T]$. All terms are well defined since 
 $u\in L^{2(\alpha +1)}([0,T]\times \RR^3)$ for almost every $s\in [0,T]$; this implies $|u(s)|^{2\alpha} u(s) \in 
L^{\frac{2(\alpha +1)}{2\alpha +1}}(\RR^3)$ which is the dual space of $L^{2(\alpha +1)}( \RR^3)$. 
\end{defn}

Furthermore the Gagliardo-Nirenberg inequality implies 
${\rm Dom}(-\Delta) \subset L^{p}(\RR^3)$ for any $p\in [2,\infty)$. 
Note that this solution is a strong one in the probabilistic meaning, that is the trajectories of $u$ are written
 in terms of stochastic integrals with respect to  the given Brownian motion $W$.
\smallskip

\section{Existence and uniqueness of global solutions} \label{well_posedeness}

The aim of this section is to prove that equation \eqref{saNS} has a unique solution in ${\mathcal X}$ defined in \eqref{calX}. 
We at first prove local well posedeness of a Galerkin approximation of $u$ and apriori estimates. 

\subsection{ Galerkin approximation and apriori estimates}  \label{sec_apriori}
 Let $(e_n, n\geq 1)$ be the orthonormal basis
of   $H$  defined in section \ref{stoch_def} (that is made of functions in $H^2$ which are also orthogonal in $\tilde{H}^{0,1}$).  
Recall that for every integer $n\geq 1$ we set ${\mathcal H}_n:=\mbox{\rm span} (e_1, \cdots , 
e_n)$ and that the orthogonal projection $P_n$ from $H$ to ${\mathcal H}_n$ restricted to $\tilde{H}^{0,1}$ coincides with the
orthogonal projection from $\tilde{H}^{0,1}$ to ${\mathcal H}_n$. 

Let $\Pi_n$ denote the projection in $H_0$ on $Q^{1/2}({\mathcal H}_n)$.
Let $W_n(t)=\sum_{j=1}^n \sqrt{q_j} \psi_j \beta_j(t) = \Pi_n W(t)$ be defined by \eqref{W-n}.

Fix $n\geq 1$ and consider the following stochastic ordinary differential equation
on the $n$-dimensional space ${\mathcal H}_n$  defined  by  $u_{n}(0)=P_n u_0$,  and for
$t\in [0,T]$ and  $v \in {\mathcal H}_n$:
 \begin{equation} \label{un_H}
d(u_{n}(t), v)= \big\langle F(u_{n}(t)),v\big\rangle  dt  
+  (P_n\,  \s (t, u_{n}(t)) \, \Pi_n\,  dW(t), v).
\end{equation}
 Then for $k=1, \, \cdots, \, n$ we have for $t\in [0,T]$:
 \[d(u_{n}(t), e_k)= \big\langle F(u_{n}(t)),e_k \big\rangle\,  dt
+ \sum_{j=1}^n q_j^{\frac{1}{2}} \big( P_n\,  \s (t,u_{n}(t))  \psi_j\, ,\, e_k \big) \,  d\beta_j(t).
\] 
Note that  for $v\in {\mathcal H}_n$   the map
$  u\in  {\mathcal H}_n  \mapsto \langle F(u) \, ,\, v\rangle  $  
is locally Lipschitz. Indeed, $H^2\subset L^{2\alpha +2}$ and there exists some constant $C(n)$
 such that  $\|v\|_{H^2}  \leq C(n) |v|_{L^2}$ 
 for $v\in {\mathcal H}_n$. 
Let $\varphi, \psi, v\in {\mathcal H}_n$; 
integration by parts implies that 
 \[ 
|\langle \Delta_h \varphi - \Delta_h \psi ,v\rangle|  \leq 
 \|\varphi - \psi\|_{1,0}\, \|v\|_{1,0}   \\
 \leq  C(n)^2 |\varphi - \psi|_{L^2}\, |v|_{L^2}.
 \]  
 In the polynomial nonlinear term, the H\"older and Gagliardo-Nirenberg inequalities imply:
 \begin{align*}
  \Big| \int_{\RR^3}\!  \big(  |\varphi (x) |^{2\alpha}\varphi (x)- &|\psi (x)|^{2\alpha} \psi (x) \big) v(x) dx  \Big|   \\
 & \leq C(\alpha) 
\big( \|\varphi\|_{L^{2\alpha +2}}^{2\alpha }  + \|\psi\|_{L^{2\alpha +2}}^{2\alpha } \big)
\,  \|\varphi - \psi\|_{L^{2\alpha +2}}\,  \|v\|_{L^{2\alpha +2}} 
 \\
&\leq C(\alpha) \, C(n)^{2(\alpha +1)} \big( |\varphi |_{L^2}^{2\alpha} +|\psi |_{L^2}^{2\alpha}\big) \, |\varphi - \psi|_{L^2}\,  |v|_{L^2} .
\end{align*} 

Finally, using  \eqref{maj_B}  and integration by parts we deduce:
\begin{align*}
 |\langle  B(\varphi) - B(\psi), v\rangle|  
 &=\big| -\langle B(\varphi -\psi , v)\, , \, \varphi \rangle - \langle B(\psi, v)\, , \, \varphi-\psi \rangle \big| \\
 & \leq 
C\, \|\varphi - \psi\|_{1,0} \big( \|\varphi \|_{1,0}+ \| \psi\|_{1,0} \big)  \|v\|_{1,1} \\
&\leq C C(n)^3 |\varphi - \psi|_{L^2} \big( |\varphi|_{L^2}+|\psi|_{L^2}\big) |v|_{L^2}.
\end{align*}  

 Condition ({\bf C})  implies that the
map    $u\in {\mathcal H}_n  \mapsto  \big( \sqrt{q_j}\, \big(\s(t,u) \psi_j\, ,\, e_k\big) : 1\leq
j,k\leq n \big)$ 
satisfies the classical global linear growth and Lipschitz conditions from ${\mathcal H}_n$ to $n\times n$ matrices uniformly in $t\in [0,T]$; indeed, 
the growth and Lipschitz conditions \eqref{growth_tilde_LQ} and {\bf (C)(ii)} imply:
\begin{align*}
 \big| \big( &\s(t,u) \sqrt{q_j} \psi_j\, ,\, e_k\big)  \big| \leq 
\big| \s(t,u) \sqrt{q_j} \psi_j \big|_H \, |e_k|_{L^2} 
 \leq \sqrt{K_0} +\sqrt{K_1} |u|_{L^2} + \sqrt{K_2} |\nabla_h u|_{L^2} \\
 \qquad &\leq  C(n)  \big( 1+ |u|_{L^2} \big),\\
 \big| \big( &[\s(t,u) - \s(t,v)]  \sqrt{q_j} \psi_j \, , \,  e_k \big) \big| 
 \leq \sqrt{L_1} |u-v|_{L_2} + \sqrt{L_2} |\nabla_h (u-v)|_{L^2}^2
  \leq C C(n) |u-v|_{L_2}.
 \end{align*}
Hence by a well-known result about existence and
uniqueness of solutions to
stochastic  differential equations  (see e.g. \cite{Kunita}), there
exists a maximal solution  $u_{n}=\sum_{k=1}^n (u_{n}\, ,\, e_k\big)\, e_k\in {\mathcal H}_n$ to \eqref{un_H},
i.e., a stopping time
$\tau_{n}^*\leq T$ such that \eqref{un_H} holds for $t< \tau_{n}^*$ and as $t \uparrow \tau_{n}^*<T$,
$|u_{n}(t)|_{L^2} \to \infty$.
\par
The following proposition shows that  $\tau_{n}^*=T$   a.s., that is provides the (global) existence and uniqueness
of the finite dimensional approximations $u_n$.   It  also gives apriori 
estimates of $u_{n}$ which do not depend on $n$; this will be crucial to prove well posedeness of \eqref{saNS}.
\begin{prop} \label{Galerkin_n}  
Let  $u_0$ be a ${\mathcal F}_0$ measurable random variable such that $\EE \|u_0\|_{0,1}^4 <\infty$, 
$T>0$ and $\sigma$ satisfy condition {\bf (C)} with $\tilde{K}_2<\frac{2\nu}{21}$. Then \eqref{un_H} has a  unique   global solution 
(i.e., \ $\tau_{n}^*=T$ a.s.) with a modification  $u_{n} \in C([0, T], {\mathcal H}_n)$. Furthermore,    there exists a
constant $C>0$ such that:  
\begin{align} \label{apriori}
\sup_n \EX \,\Big[ & \sup_{t\in [0, T]}\|u_{n}(t)\|_{0,1}^{4}  +
\Big( \int_0^T \!\! \|u_{n}(s)\|_{1,1}^2 \, ds \, \Big)^2 + \int_0^T \! \! \|u_n(s)\|_{L^{2(\alpha +1)}}^{2(\alpha +1)} ds \Big]
 \leq  C \big( \EX\| u_0\|_{0,1}^{4} +1\big) .
\end{align}
\end{prop}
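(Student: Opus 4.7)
The plan has four steps. First, the local Lipschitz and local linear growth verifications carried out immediately above the proposition, together with standard finite-dimensional SDE theory (see \cite{Kunita}), give a unique maximal solution $u_n \in C([0,\tau_n^*), {\mathcal H}_n)$ of \eqref{un_H} with $|u_n(t)|_{L^2} \to \infty$ on $\{\tau_n^* < T\}$ as $t \uparrow \tau_n^*$. Introducing the stopping time $\tau_N := \inf\{t \in [0,T] : \|u_n(t)\|_{0,1} \ge N\} \wedge \tau_n^*$, it suffices to obtain bounds on $u_n(\cdot \wedge \tau_N)$ that are uniform in $N$: monotone convergence then forces $\tau_n^* = T$ almost surely, and \eqref{apriori} passes to the limit.

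Second, the key identity $P_n = \tilde P_n$ on $\tilde H^{0,1}$ established in Section \ref{stoch_def} makes testing \eqref{un_H} against $u_n$ in the $(\cdot,\cdot)_{0,1}$ inner product legitimate, so the finite-dimensional It\^o formula applied to $\|u_n(t)\|_{0,1}^2$ yields
\[
d\|u_n\|_{0,1}^2 = \bigl[2\langle F(u_n), u_n\rangle_{0,1} + |P_n \sigma(t, u_n) \Pi_n|_{\widetilde{\mathcal L}}^2 \bigr] dt + 2 (u_n, P_n \sigma(t, u_n) \Pi_n dW)_{0,1}.
\]
The antisymmetry \eqref{antisymetry} kills $\langle B(u_n), u_n\rangle$, integration by parts in the $\partial_3$ slot of $a\langle |u_n|^{2\alpha} u_n, u_n\rangle_{0,1}$ produces the two nonnegative dissipative pieces $a\||u_n|^{\alpha+1}\|_{L^2}^2$ and at least $a\||u_n|^\alpha \partial_3 u_n\|_{L^2}^2$, and $\nu\langle \Delta_h u_n, u_n\rangle_{0,1} = -\nu|\nabla_h u_n|_{L^2}^2 -\nu|\nabla_h \partial_3 u_n|_{L^2}^2$. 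Hence
\[
2\langle F(u_n), u_n\rangle_{0,1} \le -2\nu |\nabla_h u_n|_{L^2}^2 -2\nu |\nabla_h \partial_3 u_n|_{L^2}^2 -2a \||u_n|^{\alpha+1}\|_{L^2}^2 -2a \||u_n|^\alpha \partial_3 u_n\|_{L^2}^2 + 2 |\langle \partial_3 B(u_n), \partial_3 u_n\rangle|.
\]

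Third, the only problematic term $|\langle \partial_3 B(u_n), \partial_3 u_n\rangle|$ is controlled by Lemma \ref{lem_delta_3_B} with tunable parameters $\epsilon_0, \epsilon_1$, while the It\^o correction is bounded using \eqref{growth_LQ}. Choosing $\epsilon_0$ small enough that $C\epsilon_0$ absorbs into $\nu$, then $\epsilon_1$ small enough that $C\epsilon_1/(4\epsilon_0)$ absorbs into $a$, the hypothesis $\tilde K_2 < 2\nu/21$ guarantees that strictly positive dissipative multiples of $|\nabla_h u_n|_{L^2}^2$, $|\nabla_h \partial_3 u_n|_{L^2}^2$ and $\||u_n|^\alpha \partial_3 u_n\|_{L^2}^2$ survive. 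Collecting the residual $|\partial_3 u_n|_{L^2}^2$ from Lemma \ref{lem_delta_3_B} and $\tilde K_1 \|u_n\|_{0,1}^2$ from the noise into the drift, and adding a multiple of $\|u_n\|_{0,1}^2$ to both sides to reconstitute the $\|\cdot\|_{1,1}$ norm, one arrives at an inequality of the form
\[
d\|u_n\|_{0,1}^2 + c\,\|u_n\|_{1,1}^2\, dt + 2a\, \|u_n\|_{L^{2(\alpha+1)}}^{2(\alpha+1)}\, dt \le C(1 + \|u_n\|_{0,1}^2)\, dt + dN_t,
\]
where $N_t$ is the martingale part stopped at $\tau_N$.

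Fourth, to reach the fourth moment in \eqref{apriori}, I apply It\^o once more to $(\|u_n\|_{0,1}^2)^2$. The extra quadratic variation contribution is bounded by $4\|u_n\|_{0,1}^2 |\sigma|_{\widetilde{\mathcal L}}^2$, so that the noise coefficient $\tilde K_2$ effectively triples relative to the viscous dissipation; the precise numerical factor $21$ in the hypothesis encodes the room still needed after the same Lemma \ref{lem_delta_3_B} absorption is performed at this higher level. Bounding the quadratic variation of the new martingale part by Cauchy--Schwarz, applying Burkholder--Davis--Gundy, then Young's inequality to split off $\tfrac12 \EE \sup_{t\le T \wedge \tau_N} \|u_n\|_{0,1}^4$, and finally Gronwall's lemma gives the uniform in $N$ bound on $\EE \sup_{t\le T \wedge \tau_N} \|u_n(t)\|_{0,1}^4$; the two time integral bounds in \eqref{apriori} follow by integrating the above inequality between $0$ and $T\wedge \tau_N$, squaring, and reusing the sup bound. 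The delicate balancing of constants across the Lemma \ref{lem_delta_3_B} and \eqref{growth_LQ} absorptions in this last step is the sole genuine obstacle.
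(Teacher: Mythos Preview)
Your outline is correct and mirrors the paper's proof almost step for step: localize by $\tau_N$, apply It\^o to $\|u_n\|_{0,1}^2$ and then to $\|u_n\|_{0,1}^4$, absorb $\langle\partial_3 B(u_n),\partial_3 u_n\rangle$ via Lemma~\ref{lem_delta_3_B}, handle the stochastic integral by BDG\,+\,Young (this is where the constant $21$ enters, exactly as you say), apply Gronwall, and finally square the integrated second-moment identity to get the bound on $\EE\big(\int_0^T\|u_n\|_{1,1}^2\,ds\big)^2$. One small point to record explicitly: in that last step the squared martingale produces, through \eqref{growth_LQ}, a term $\tilde K_2\,\EE\int_0^T\|u_n(s)\|_{0,1}^2\|u_n(s)\|_{1,1}^2\,ds$ which your ``sup bound'' alone does not control; the paper extracts this cross-term as a dissipative byproduct of the fourth-moment It\^o step (their \eqref{apriori_4_T}), and you should note that your Step~4 furnishes it as well.
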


\begin{proof}
Let $u_{n}(t)$ be the 
maximal solution to \eqref{un_H} described above.
 For every $N>0$, set
\begin{equation*}
\tau_N = \inf\{t:\; \|u_{n}(t)\|_{0,1} \geq N \}\wedge \tau_n^*\, .
\end{equation*}
\par
 It\^o's
 formula applied to $\| \, .\|_{0,1}$  and the antisymmetry relation  \eqref{antisymetry} of the bilinear term 
yield that for $t \in [0, T]$:
\begin{align} \label{Ito_2}
& \| u_{n}(t\wedge \tau_N)\|_{0,1}^2 = \|P_n u_0\|_{0,1}^2
- 2\nu \int_0^{t\wedge \tau_N}\!\! | \nabla_h u_{n}(s)|_{L^2}^2 ds
- 2\nu \int_0^{t\wedge \tau_N}\!\! | \nabla_h \partial_3 u_{n}(s)|_{L^2}^2 ds   \\
& - 2 a  \int_0^{t\wedge \tau_N}\!\!  \| u_n(s)\|_{L^{2\alpha +2}}^{2\alpha +2}  ds 
-2a(2\alpha +1)  \int_0^{t\wedge \tau_N} \! \int_{\RR^3} \!\!  | u_n(s,x)|^{2\alpha } |\partial_3 u_n(s,x)|^2  ds +\sum_{j=1}^3 T_j(t),
 \nonumber
\end{align}
where
\begin{align*}
T_1(t)&= -2\int_0^{t\wedge \tau_N} \langle \partial_3 B(u_n(s)),\partial_3 u_n(s) \rangle\,  ds, \\
T_2(t)&=2\int_0^{t\wedge \tau_N} \big( \sigma (s,u_n(s)) dW_n(s) , u_n(s) \big)_{0,1} , \\
T_3(t)&= \int_0^{t\wedge \tau_N}  \big| P_n\, \sigma(s,u_n(s)) \Pi_n \big|_{\widetilde{\mathcal L}}^2 \;  ds .
\end{align*}
The growth condition \eqref{growth_LQ} implies that
\[ T_3(t)\leq \int_0^{t\wedge \tau_N} \big[ \tilde{K}_0  + \tilde{ K}_1 \| u_n(s)\|_{0,1}^2 
+ \tilde{K}_2 \big( |\nabla_h u_n(s)|_{L^2}^2 + |\partial_3 \nabla_h u_n(s)|_{L^2}^2  \big) \big] ds,\]
while  \eqref{delta_3_B}  in Lemma \ref{lem_delta_3_B} yields the existence of positive constants $C, C_\alpha, \epsilon_0$ and $\epsilon_1$
such that
\begin{align*}
|T_1(t)| \leq & \, 2C
\Big[ \epsilon_0 \int_0^{t\wedge \tau_N} |\nabla_h\partial_3 u_n(s)|_{L^2}^2\, ds
+ \frac{\epsilon_1}{4\epsilon_0} \int_0^{t\wedge \tau_N}  \big| |u_n(s)|^\alpha \, \partial_3 u_n(s)\big|_{L^2}^2 ds 
\nonumber \\
&\qquad + C_\alpha \epsilon_0^{-1} \epsilon_1^{-\frac{1}{\alpha-1} }\int_0^{t\wedge \tau_N}  |\partial_3 u_n(s)|_{L^2}^2 ds \Big].
\end{align*}
Finally, the Burkholder-Davies-Gundy and Young inequalities as well as  \eqref{growth_LQ}
 imply that for $\beta \in (0,1)$:
\begin{align*}
\EE\Big( \!\sup_{s\leq t} & \Big|  2\!\! \int_0^{s\wedge \tau_N} \! \!
\big( \sigma (r, u_n(r)) dW_n(r) , u_n(r) \big)_{0,1} \Big| \Big)    \leq 
6   \EE\Big\{ \! \int_0^{t\wedge \tau_N} \! \! \big|P_n \sigma (r,u_n(r)) \Pi_n \big|_{\widetilde{\mathcal L}}^2  \|u_n(r)\|_{0,1}^2 dr \Big\}^{\frac{1}{2}} \\
 \leq &\beta \; \EE\Big( \sup_{s\leq \inf t\wedge \tau_N} \| u_n(s)\|_{0,1}^2 \Big) \\
&+ \frac{9}{\beta} \; \EE \int_0^{t\wedge \tau_N}\!\! 
 \big[ \tilde{K}_0  + \tilde{K}_1 \| u_n(s)\|_{0,1}^2 
+ \tilde{K}_2 \big( |\nabla_h u_n(s)|_{L^2}^2 + |\partial_3 \nabla_h u_n(s)|_{L^2}^2  \big) \big] ds.
\end{align*}
If $\tilde{K}_2<\frac{\nu}{5}$ and $\epsilon \in (0,2\nu -10 \tilde{K}_2)$, 
 we may choose $\beta \in (0,1)$  such that $2\nu - \big( \frac{9}{\beta} +1\big) \tilde{K}_2 > \epsilon$, then $\epsilon_0>0$ such that
 $2C\epsilon_0 < \frac{\epsilon}{2}$, and finally $\epsilon_1>0$ such  that $2a(2\alpha +1) - \frac{\epsilon_1 C}{2\epsilon_0}>\epsilon$.
 For  this choice of constants, the inequality $\| P_n u_0\|_{0,1}\leq \|u_0\|_{0,1}$ and the above upper estimates yield (neglecting some non negative
 terms in the left hand side of \eqref{Ito_2}):
 \begin{align}  \label{maj_Eun_1}
 (1-\beta) \EE\Big( \sup_{s\in [0,t]} & \|u_n(s\wedge \tau_N)\|_{0,1}^2 \Big) 
 \leq  \EE\|u_0\|_{0,1}^2 + T \tilde{K}_0 \big(\frac{9}{\beta} +1\big)  
  \nonumber \\
 &\quad 
 + \Big[ \tilde{K}_1\big( \frac{9}{\beta}+1\big) + \frac{2 C C_\alpha}{\epsilon_0 \epsilon_1^{1/(\alpha -1)} }\Big] 
 \EE\int_0^{t} \! ||u_n(s\wedge \tau_N)\|_{0,1}^2 ds. 
 \end{align}
Gronwall's lemma implies that $\EE\big( \sup_{s\in [0,T]} \|u_n(s\wedge \tau_N)\|_{0,1}^2\big) \leq  C$ for some constant $C$ which does not depend on
$n$ and $N$. 
Note that $\| \phi\|_{1,1}^2 = \|\phi\|_{0,1}^2 + |\nabla_h \phi|_{L^2}^2 + |\partial_3 \nabla_h \phi|_{L^2}^2$. 
We use \eqref{maj_Eun_1} and the upper estimates   of $T_i(t)$ for $i=1,2,3$ for the same choice of constants $\beta, \epsilon_0$ and $\epsilon_1$;
this yields
\begin{equation}  \label{apriori_2}
  \EE\Big( \sup_{s\in [0,T]} \|u_n(s\wedge \tau_N)\|_{0,1}^2 \Big) 
 +  \EE\int_0^{\tau_N} \!\! \big( \|u_n(s)\|_{1,1}^2 + \|u_n(s)\|_{L^{2\alpha +2}}^{2\alpha +2} \big) ds \leq C\big( 1+\EE\|u_0\|_{0,1}^2\big)
\end{equation}
for some positive  constant $C$ depending on $\tilde{K}_i$, $i=0,1,2$,  $ \beta$, $\epsilon_0$ and $\epsilon_1$ but independent of $n$ and $N$. 

Apply once more the It\^o formula to the square of $\|\, .\, \|_{0,1}^2$.  This yields
\begin{align}  \label{Ito_4}
\|u_n&(t\wedge \tau_N)\|_{0,1}^4 =  \|P_n u_0\|_{0,1}^4 -4\nu \int_0^{t\wedge \tau_N} \|u_n(s)\|_{0,1}^2 \big[ |\nabla_h u_n(s)|^2_{L^2} + 
|\partial_3 \nabla_h u_n(s)|^2_{L^2}\big] ds  \nonumber \\
&-4a \int_0^{t\wedge \tau_N} \|u_n(s)\|_{0,1}^2  \|u_n(s)\|_{L^{2\alpha +2}}^{2\alpha +2} ds  \nonumber \\
&-4a(2\alpha +1) \int_0^{t\wedge \tau_N} \|u_n(s)\|_{0,1}^2 
\big| \, |u_n(s)|^\alpha \partial_3 u_n(s)\, \big|_{L^2}^2 ds + \sum_{j=1}^4 \tilde{T}_j(t),
\end{align}
where we let
\begin{align*}
\tilde{T}_1(t)=&\, -4 \int_0^{t\wedge \tau_N} \langle \partial_3 B(u_n(s)), \partial_3 u_n(s)\rangle \, \|u_n(s)\|_{0,1}^2 ds, \\
\tilde{T}_2(t) =&\;  4 \int_0^{t\wedge \tau_N} \big( P_n \sigma(s,u_n(s)) dW_n(s), u_n(s) \big)_{0,1} \, \|u_n(s)\|_{0,1}^2 , \\
\tilde{T}_3(t)=& \; 2 \int_0^{t\wedge \tau_N} \big|P_n \sigma(s, u_n(s)) \Pi_n\big|_{\widetilde{\mathcal L}}^2 \, \|u_n(s)\|_{0,1}^2 ds,\\
\tilde{T}_4(t)=&\;  4 \int_0^{t\wedge \tau_N} \big|\big(  \Pi_n \sigma(s,u_n(s))  P_n \big)^* u_n(s)\big|_0^2 \,  \|u_n(s)\|_{0,1}^2 ds.
\end{align*}
The growth condition \eqref{growth_LQ}  implies that 
\[ \tilde{T}_3(t) + \tilde{T}_4(t) \leq 6\int_0^{t\wedge \tau_N} \!\! \big[ \tilde{ K}_0 + \tilde{K}_1 \|u_n(s)\|_{0,1}^2 + 
\tilde{K}_2 \big( |\nabla_h u_n(s)|_{L^2}^2 + |\partial_3 \nabla_h u_n(s)|_{L^2}^2\big)  \big] \|u_n(s)\|_{0,1}^2 ds, \]
while \eqref{delta_3_B} implies  
\begin{align*} |\tilde{T}_1(t)|\leq\,  & 4C\,  \int_0^{t\wedge \tau_N} \!\!
\Big( \epsilon_0 |\nabla_h\partial_3 u_n(s)|_{L^2}^2\, 
+ \frac{\epsilon_1}{4\epsilon_0}  \big| |u_n(s)|^\alpha \, \partial_3 u_n(s)\big|_{L^2}^2  
+ C_\alpha \epsilon_0^{-1}\, \epsilon_1^{-\frac{1}{\alpha-1} }  |\partial_3 u_n(s)|_{L^2}^2  \Big)
\\
& 
 \qquad \qquad \times \|u_n(s)\|_{0,1}^2\, ds .
\end{align*}
The Burkholder-Davies-Gundy inequality, the growth condition \eqref{growth_LQ} and Young's inequality imply that for $\beta\in (0,1)$:
\begin{align*}
& \EE\Big( \sup_{s\leq t}  \tilde{T}_2(s) \Big)  \leq 
12  \EE\Big\{ \int_0^{t\wedge \tau_N} \big| \sigma (r,u_n(r)) \big|_{\widetilde{\mathcal L}}^2 \, \|u_n(r)\|_{0,1}^6 dr \Big\}^{\frac{1}{2}} \\
& \quad   \leq  \beta \EE\Big( \sup_{s\leq t\wedge \tau_N} \| u_n(s)\|_{0,1}^4 \Big) \\
&\qquad  + 
 \frac{36}{\beta}  \EE \int_0^{t\wedge \tau_N} \!\! 
  \big[  \tilde{K}_0 + \tilde{K}_1 \|u_n(s)\|_{0,1}^2 + 
\tilde{K}_2 \big( |\nabla_h u_n(s)|_{L^2}^2 + |\partial_3 \nabla_h u_n(s)|_{L^2}^2\big)  \big]
  \|u_n(s)\|_{0,1}^2 ds. 
\end{align*}
If $\tilde{K}_2<\frac{2\nu}{21}$ we may choose $\beta \in (0,1)$ and $\epsilon>0$ such that  $\epsilon < 4\nu -6 \big( 1+6/\beta) \tilde{K}_2$, 
then $\epsilon_0>0$ such that  
$4C \epsilon_0 < \frac{\epsilon}{2}$, 
and finally $\epsilon_1>0$ such that $\frac{4C\epsilon_1}{4\epsilon_0} + \epsilon < 4a(2\alpha +1)$. For this choice of constants, neglecting
some non positive integrals in the right hand side of \eqref{Ito_4}, we deduce:
\begin{align*}
&(1-\beta) \EE\Big( \sup_{s\in [0,t]} \|u_n(s\wedge \tau_N)\|_{0,1}^4\Big) +  \frac{\epsilon}{2}\,   \EE\int_0^{t\wedge \tau_N}\! \!
 \|u_n(s)\|_{0,1}^2 \big[ |\nabla_h u_n(s)|_{L^2}^2
+ |\partial_3 \nabla_h u_n(s)|_{L^2}^2 \big] ds\\
& \leq
\EE\|u_0\|_{0,1}^4 +  \big( 6+\frac{36}{\beta} \big) \tilde{K}_1  \EE\int_0^{t} \! \!\|u_n(s\wedge \tau_N)\|_{0,1}^4 ds 
+ \Big[ 6+\frac{36}{\beta}\Big]\tilde{K}_0 \EE\int_0^{t} \! \!\|u_n(s\wedge \tau_N)\|_{0,1}^2 ds.
\end{align*}
This inequality, \eqref{apriori_2} 
and Gronwall's lemma yield $ \sup_n \EE\big( \sup_{s\in [0,T]} \|u_n(s\wedge \tau_N)\|_{0,1}^4 \big) <\infty$.
We deduce the existence of a constant $C$, which does not depend on $n$ and $N$, such that:
\begin{equation}  \label{apriori_4}
  \EE\Big( \sup_{s\in [0,T]} \|u_n(s\wedge \tau_N)\|_{0,1}^4 \Big) 
 +  \EE\int_0^{\tau_N} \!\! \ \|u_n(s)\|_{1,1}^2  \|u_n(s)\|_{0,1}^{2}  ds \leq C\big( 1+\EE\|u_0\|_{0,1}^4\big). 
\end{equation}
We now prove  that \eqref{apriori} holds. As $N\to \infty$, the sequence of stopping times $\tau_N$ increases to $\tau_n^*$,
and on the set $\{\tau_n^*<T\}$ we have $\sup_{s\in [0,\tau_N]} \|u_n(s)\|_{0,1}\to +\infty$. Hence \eqref{apriori_2} proves that $P(\tau_n^*<T)=0$ and
that for almost every $\omega$, for $N(\omega)$ large enough, $\tau_{N(\omega)}(\omega)=T$. 
The   monotone convergence theorem used  in \eqref{apriori_2} and \eqref{apriori_4},
 we deduce  the following upper estimates for some constant which does not depend on $n$: 
\begin{align}
 \EE\Big( \sup_{s\in [0,T]} \|u_n(s)\|_{0,1}^2 \Big) 
 +  \EE\int_0^T \!\! \big( \|u_n(s)\|_{1,1}^2 + \|u_n(s)\|_{L^{2\alpha +2}}^{2\alpha +2} \big) ds& \leq C\big( 1+\EE\|u_0\|_{0,1}^2\big),
 \label{apriori_2_T}\\
  \EE\Big( \sup_{s\in [0,T]} \|u_n(s)\|_{0,1}^4 \Big) 
 +  \EE\int_0^{T} \!\! \ \|u_n(s)\|_{1,1}^2  \|u_n(s)\|_{0,1}^{2}  ds &\leq C\big( 1+\EE\|u_0\|_{0,1}^4\big). \label{apriori_4_T}
\end{align}
To complete the proof and check \eqref{apriori}, we finally prove that
\begin{equation} \label{apriori_last}
\sup_n \EE\Big(\Big|  \int_0^T \|u_n(s)\|_{1,1}^2  ds \Big|^2 \Big) \leq C\, \big(1+\EE\|u_0\|_{0,1}^4\big). 
\end{equation}
The identity \eqref{Ito_2} and the upper estimates of $T_1(t)$ and $T_3(t)$ imply that for $\tilde{K}_2<2\nu$, $2C\epsilon_0 <\tilde{K}_2$
and  $\epsilon_1$ small enough we have for
every $t\in [0,T]$ and :
\begin{align} \label{Ito_2_Bis}
\|u_n(t\wedge \tau_N)\|_{0,1}^2& + (2\nu -\tilde{K}_2) \int_0^{t\wedge \tau_N} \big( |\nabla_h u_n(s)|_{L^2}^2 + |\partial_3 \nabla_h u_n(s)|_{L^2}^2\big) ds 
\nonumber \\
&\leq \|u_0\|_{0,1}^2 + \sup_{s\leq t} |T_2(s)| + J(t),
\end{align}
where for some positive constant $C$:
\[ J(t) = \int_0^{t\wedge \tau_N} \! \Big[ \tilde{K}_1 \|u_n(s)\|_{0,1}^2 + \tilde{K}_0 + \frac{2CC_\alpha}{\epsilon_0 \epsilon_1^{1/(\alpha -1)}}
 |\partial_3 u_n(s)|_{L^2}^2\Big] ds
\leq C \int_0^{t\wedge \tau_N} \!\! \| u_n(s)\|_{0,1}^2 ds.
\] 
Hence for $\tilde{K}_2<2\nu$, using the  Doob and Cauchy Schwarz  inequalities as well as \eqref{growth_LQ}, we deduce: 
\begin{align*}
\EE\Big[ \Big\{& \sup_{s\leq T} \|u_n(s\wedge \tau_N)\|_{0,1}^2 + (2\nu - \tilde{K}_2) \int_0^{\tau_N} \big( |\nabla_h u_n(s)|_{L^2}^2 +  
 |\partial_3 \nabla_h u_n(s)|_{L^2}^2 \big) ds \Big\}^2 \Big] \\
 &\leq 3 \EE (J(T)^2) + 3 \EE\Big(\sup_{s\leq T} T_2^2(s) \Big) + 3 E(\|u_0\|_{0,1}^4) \\
 &\leq 3CT \EE\int_0^{\tau_N} \|u_n(s)\|_{0,1}^4 ds 
  + 3C \EE \int_0^{\tau_N} \|u_n(s)\|_{0,1}^2 \, |\sigma(u_n(s)) \Pi_n|_{\tilde{\mathcal L}}^2 ds + 3 \EE(\|u_0\|_{0,1}^4)\\
  &\leq 3C\,  \EE\int_0^{\tau_N} \big[2 \tilde{K}_2 \|u_n(s)\|_{0,1}^2 \|u_n(s)\|_{1,1}^2 + (\tilde{K}_1+T) \|u_n(s)\|_{0,1}^4 
  + \tilde{K}_0 \|u_n(s)\|_{0,1}^2 \big] ds \\
  &\qquad 
  + 3 \EE(\|u_0\|_{0,1}^4) .
\end{align*} 
Let $N\to \infty$ in this equation.  Since $\tau_{N(\omega)}(\omega)=T$ for $N(\omega)$ large enough, the above inequality 
 where $\tau_N$ is replaced by $T$ (which is deduced by means of the monotone convergence theorem)
 coupled with \eqref{apriori_2_T} and \eqref{apriori_4_T} yield \eqref{apriori_last}. This completes the proof.
\end{proof}

\subsection{Well posedeness of equation \eqref{saNS}}  \label{sec_WP}
The aim of this section is to prove that if the initial condition $u_0\in L^4(\Omega;\tilde{H}^{0,1})$, equation \eqref{saNS} has a unique (weak) solution
in the space ${\mathcal X}$ which belongs a.s. to $ C([0,T];H)$, where   ${\mathcal X}$  has been defined in \eqref{calX}.   
\begin{theorem}  \label{th_wp}
Let $\sigma$ satisfy condition {\bf (C)} with $\tilde{K}_2<\frac{2\nu}{21}$  and $u_0$ be independent of $(W(t), t\geq 0)$ 
such that $E(\|u_0\|_{0,1}^4)<\infty$.
Then there exists a weak solution $u\in {\mathcal X}$ to \eqref{saNS} with initial condition $u_0$.  This solution belongs to $C([0,T]; H)$
a.s.   

Furthermore, there exists a constant $C>0$
such that this solution satisfies the following upper estimate:
\begin{equation} \label{moments_u}
\EE\Big( \sup_{0\leq t\leq T} \|u(t)\|_{0,1}^4 + \Big(\int_0^T\!\! \|u(t)\|_{1,1}^2 dt \Big)^2 + 
\int_0^T \!\int_{\RR^3}\!  |u(t,x)|^{2(\alpha +1)} dx dt\Big) 
\leq C\big( 1+\EE\|u_0\|_{0,1}^4\big).
\end{equation}
 If $L_2<2\nu $,   then \eqref{saNS} has a pathwise unique weak solution in ${\mathcal X}$ which belongs a.s. to $ C([0,T];H)$.
\end{theorem}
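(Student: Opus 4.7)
The plan is to prove both existence and uniqueness by using the Galerkin approximations $(u_n)$ from Proposition \ref{Galerkin_n} and establishing that the sequence is Cauchy in a suitable low-regularity norm, then passing to the limit. Since the domain is $\mathbb{R}^3$ and classical compact embeddings are unavailable, the tightness/martingale approach would be awkward, so I would follow the direct convergence method of Menaldi-Sritharan, with the Brinkman-Forchheimer monotonicity of Lemma \ref{F-F} compensating for the lack of compactness.

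For existence, the central step is to show that $(u_n)$ is Cauchy in $L^2(\Omega; C([0,T];H))\cap L^2(\Omega; L^2(0,T;\tilde H^{1,0}))$. Fix $m\geq n$ and apply It\^o's formula to $|u_n-u_m|_{L^2}^2$. Writing the drift difference as $F(u_n)-F(u_m)$ plus projection errors, the monotonicity estimate \eqref{upper_F-F_t} (with $v=u_m$) combined with the Lipschitz bound {\bf(C)(ii)} yields, for any $\eta\in(L_2/2,\nu)$,
\begin{align*}
\EE|u_n(t)-u_m(t)|_{L^2}^2 &+(2\eta-L_2)\,\EE\!\int_0^t\!|\nabla_h(u_n-u_m)|_{L^2}^2\,ds\\
&\leq \EE|P_nu_0-P_mu_0|_{L^2}^2+\EE\!\int_0^t\!G_{m}(s)\,|u_n(s)-u_m(s)|_{L^2}^2\,ds+r_{n,m}(t),
\end{align*}
where $G_m(s)=L_1+C_\eta\|u_m(s)\|_{1,1}^2$ and $r_{n,m}(t)$ collects the truncation errors from $P_n\sigma\Pi_n-P_m\sigma\Pi_m$ and from unprojected pieces of $B$ and $|u_m|^{2\alpha}u_m$, both of which tend to $0$ as $n,m\to\infty$ thanks to \eqref{apriori}. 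Since $\int_0^T\|u_m(s)\|_{1,1}^2ds$ is in $L^2(\Omega)$ uniformly in $m$ by Proposition \ref{Galerkin_n}, a stochastic Gronwall argument applied after localizing on the event $\{\int_0^T G_m(s)ds\leq M\}$ (with $M\to\infty$) gives the Cauchy property.

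I would then identify the limit $u$ as a weak solution. Strong convergence in $L^2(\Omega\times[0,T];H)$ together with the uniform bounds from \eqref{apriori} yields, by Banach-Alaoglu and Fatou, that $u\in\mathcal X$ and satisfies the moment estimate \eqref{moments_u}. Passage to the limit in the nonlinear terms of \eqref{un_H} uses Lemma \ref{lemma_F} and the bilinear estimate \eqref{maj_B-B_1}: strong convergence in $L^2(0,T;H)$ combined with weak convergence in $L^2(0,T;\tilde H^{1,1})$ handles $B(u_n,u_n)$, while a.e.\ convergence of a subsequence together with the uniform $L^{2\alpha+2}$ bound and Vitali's theorem handles $|u_n|^{2\alpha}u_n$; the stochastic integral converges by It\^o isometry and {\bf(C)(ii)}. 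Continuity of $u$ in $H$ follows from a standard Pardoux/Krylov-Rozovskii argument since $F(u)$ lies in the dual of $L^2(0,T;\tilde H^{1,1})\cap L^{2\alpha+2}((0,T)\times\RR^3)$ by Lemma \ref{lemma_F}.

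Uniqueness under $L_2<2\nu$ is proved by the same It\^o computation, now applied to $|u-v|_{L^2}^2$ for two solutions with the same initial data. The Lipschitz term $L_2|\nabla_h(u-v)|_{L^2}^2$ from \eqref{growth_LQ}\,(ii) is absorbed into the dissipation produced by \eqref{upper_F-F_t} (choosing $\eta$ so that $2\eta-L_2>0$), the polynomial monotonicity gives an additional non-positive contribution, and one is left with the random-coefficient Gronwall inequality
\[
\EE|(u-v)(t)|_{L^2}^2\leq \int_0^t\!\bigl(L_1+C_\eta\|v(s)\|_{1,1}^2\bigr)\,\EE|(u-v)(s)|_{L^2}^2\,ds,
\]
from which $u=v$ follows after the usual localization on $\int_0^T\|v\|_{1,1}^2ds$. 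The main obstacle, and what dictates the strategy, is the Cauchy step: on $\mathbb{R}^3$ one cannot rely on compactness to extract converging subsequences, so monotonicity via Lemma \ref{F-F}, integrability of $\|u_m\|_{1,1}^2$ from the fourth-moment estimate in Proposition \ref{Galerkin_n}, and the smallness assumption $\tilde K_2<2\nu/21$ (needed in the a priori bounds) must work together to close the Gronwall estimate.
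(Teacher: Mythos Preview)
Your uniqueness argument is essentially the same as the paper's and is fine. The existence strategy, however, has a genuine gap at the Cauchy step, and this gap is precisely the reason the paper does \emph{not} proceed the way you propose.

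The problematic claim is that the remainder $r_{n,m}(t)$ ``tends to $0$ as $n,m\to\infty$ thanks to \eqref{apriori}.'' When you apply It\^o's formula to $|u_n-u_m|_{L^2}^2$ with $m\ge n$, the drift produces, besides $\langle F(u_n)-F(u_m),u_n-u_m\rangle$, a projection error of the type $\big((I-P_n)u_m,\,F(u_n)\big)$ (and its analogues for the noise). Controlling this requires showing that $(I-P_n)u_m\to 0$ in a norm strong enough to pair with $F(u_n)$, \emph{uniformly in $m$}. On a bounded domain this follows from compactness of $\tilde H^{0,1}\hookrightarrow H$ (or a spectral gap), but on $\RR^3$ no such compact embedding is available; the uniform a~priori bounds \eqref{apriori} give only boundedness, not uniform decay of the tails $\sum_{k>n}|(u_m,e_k)|^2$. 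The basis $(e_k)$ in Section~\ref{stoch_def} is orthogonal in $H$ and $\tilde H^{0,1}$ but has no special relation to $\Delta_h$, $B$, or the Brinkman--Forchheimer term, so nothing forces $(I-P_n)F(u_n)$ or $(I-P_n)u_m$ to be small. The same obstruction hits the noise remainder $P_n\sigma\Pi_n-P_m\sigma\Pi_m$. Thus the Cauchy estimate does not close, and the ``direct convergence'' you outline cannot be completed as written.

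The paper avoids this by the Pardoux/Menaldi--Sritharan \emph{weak} route: one extracts weak limits $u$, $\tilde F$, $\tilde S$ of $u_n$, $F(u_n)$, $P_n\sigma(\cdot,u_n)\Pi_n$ from the uniform bounds, writes the limiting equation $du=\tilde F\,dt+\tilde S\,dW$, and then \emph{identifies} $\tilde F=F(u)$ and $\tilde S=\sigma(\cdot,u)$ by a monotonicity argument with the discounting weight $e^{-r(t)}$, $r(t)=\int_0^t\big(C_\eta\|v(s)\|_{1,1}^2+L_1\big)ds$, exploiting \eqref{upper_F-F_t} and {\bf(C)(ii)}. This identification step replaces the strong convergence you try to obtain, and it needs only weak compactness (always available in reflexive spaces) rather than the unavailable strong compactness on $\RR^3$. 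Incidentally, the method you call ``Menaldi--Sritharan'' is exactly this weak-convergence/monotonicity scheme, not a Cauchy-sequence argument. The continuity in $H$ is then obtained in the paper by a semigroup regularisation $e^{-\delta A}$, again avoiding any compactness in space.
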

\begin{proof}
The proof is decomposed in several steps. 

Recall that ${\mathcal L}$ is defined by \eqref{tilde_LQ-norm}
and that $\s$ satisfies \eqref{growth_tilde_LQ}. 
\medskip

\noindent \textbf{Step 1:  Weak convergence of the solution   }
The inequalities \eqref{apriori}
and \eqref{maj_E_F} imply the existence of a subsequence of
$(u_{n}, {n\geq 1})$ (resp. of $\big( P_n \s(.,u_n)\circ \Pi_n, n\geq 1\big)$ and of $\big( F(u_n), n\geq 1\big)$), still denoted by the same notation,
  of  processes
 $u \in {\mathcal X}$ (resp. $ \tilde{S}\in L^2(\Om_T ; {\mathcal L})$ and 
 $\tilde{F} \in \big[ L^4\big(\Omega; L^2(0,T ; \tilde{H}^{1,1})\big)\cap L^{2(\alpha +1)}(\Om_T\times \RR^3)\big]^* $), 
  and finally  of a random variable
 $\tilde{u}(T) \in L^2(\Om ;  \tilde{H}^{0,1})$,
for which the following  properties hold:
\\
(i) $u_{n}  \to u$ weakly in $L^4\big(\Omega;L^2(0,T;\tilde{H}^{1,1})\big) \cap L^{2(\alpha+1)}(\Om_T\times \RR^3)$, \\
(ii)    $u_{n} $ is weak star converging to $ u$  in $L^4\big(\Omega; L^\infty\big(0, T ; \tilde{H}^{0,1})\big)$, \\
(iii) $u_{n}(T)  \to
 \tilde{u}(T)
$ weakly in $L^2(\Om ;  \tilde{H}^{0,1})$, \\
(iv) $F(u_{n}) \to \tilde{F}$ weakly in $\big[ L^4\big(\Omega;L^2(0,T; \tilde{H}^{1,1})\big)\cap L^{2(\alpha +1)}(\Om_T\times \RR^3)\big]^* $ ,   \\
(v)   $P_n \s (.,u_{n}(.)) \Pi_n \to \tilde{S}$   weakly in $L^2(\Om_T; {\mathcal L})$.
\par

Indeed, (i) and (ii) are straightforward consequences of
 Proposition \ref{Galerkin_n}, of  \eqref{apriori}, and of
 uniqueness of the limit
of $\EX \int_0^T (u_{n}(t), v(t))dt$ for appropriate $v$.
The upper estimate \eqref{maj_E_F} proves (iv). 
The definition of  $P_n$, 
$\Pi_n$,  the growth condition \eqref{growth_tilde_LQ} and \eqref{apriori} imply: 
\[ 
\sup_n  \,  \EE\int_0^T   |P_n \s(s, u_n(s)) \Pi_n|_{\mathcal L}^2\, ds 
 \leq\sup_n  \EE\int_0^T \!\! \big[ {K}_0 + {K}_1 |u_n(s)|_{L^2}^2  + {K}_2 |\nabla_h u_n(s)|_{L^2}^2 \big] ds <\infty.
\] 
This proves (v). Finally, 
\eqref{apriori_4} and the equality $\tau_N=T$ a.s. imply  that $\sup_n \EE\|u_n(T)\|_{0,1}^4 <\infty$, which proves (iii). 

Furthermore, properties (i) and (ii) and \eqref{apriori} imply that
\begin{align*}
\EX\Big[ & \Big(  \int_0^T \|u(s)\|_{1,1}^2  ds \Big)^2 + \int_0^T \|u(s)\|_{L^{2\alpha+2}}^{2\alpha+2} ds \Big] 
\leq C(1+\EX \|u_0\|_{0,1}^4), \\
\EX\Big( & \sup_{s\in [0,T]} \|u(s)\|_{0,1}^4 \Big) \leq C(1+\EX \|u_0\|_{0,1}^4).
\end{align*}
\medskip

\noindent \textbf{Step 2:   An equation for the weak limits  }
The approach is that used in \cite{Pardoux}. 
We prove that $\tilde{u}(T)=u(T)$ a.s. and that for $t\in [0,T]$:
\begin{equation}   \label{eq_u_1}
u(t) = u_0 + \int_0^t \tilde{F}(s) ds + \int_0^t \tilde{S}(s) dW(s). 
\end{equation}
For $\delta >0$, let $f \in H^1(-\delta, T+\delta)$ be such that
 $\|f\|_\infty = 1$,
 $f(0)=1$ and for any integer $j \geq 1$ set
 $g_j(t)=f(t) e_j$,
 where $\{e_j \}_{j\geq 1}$
 is the previous orthonormal basis of $H$ made of elements of $H^2$  which are also orthogonal in $\tilde{H}^{0,1}$, 
 such that for every $n\geq 1$, ${\mathcal H}_n=\mbox{\rm span }\, (e_1, \cdots, e_n)$. 
 \smallskip

\par 
The It\^o formula
implies that for any $j\geq 1$, and for $0 \leq t \leq T$: 
\begin{equation} \label{equality1}
  \big( u_{n}(T)\, ,\, g_j(T)\big) = \big( u_n(0)\, ,\,  g_j(0)\big)
 +\sum_{i=1}^3  I_{n ,j}^i,
\end{equation}
where
\begin{eqnarray*}
I_{n ,j}^1  &= & \int_0^T (u_{n}(s), e_j) \,  f'(s) ds,\qquad 
I_{n ,j}^2  =  \int_0^T \langle F(u_{n}(s)), g_j(s)\rangle  ds ,\\
I_{n ,j}^3  &= &  \int_0^T \big( P_n \s(s,u_{n}(s)) \Pi_n dW(s), g_j(s)\big) . 
\end{eqnarray*}
 We study the convergence of all terms in \eqref{equality1}.  Since $f' \in L^2(0, T)$ and $\alpha >1$, 
 for every $Z \in L^{2(\alpha +1)}(\Om) \subset  L^4(\Om)$, the map 
  $(t, \om) \mapsto e_j  Z(\omega)\, f'(t)$
 belongs to $ L^4\big(\Om; L^2(0,T; \tilde{H}^{0,1})\big) 
 \subset L^{\frac{4}{3}}(\Omega ; L^1(0,T;\tilde{H}^{0,1}))$. Hence,  the weak-star convergence 
 (ii) above implies that
as $n \to \infty$, $I_{n ,j}^1 \to \int_0^T \big(u(s), e_j\big)
f'(s) ds$ weakly in    $\big( L^{2(\alpha +1)}(\Om) \big)$.     

 \noindent Furthermore, the Gagliardo-Nirenberg inequality implies that $H^1(-\delta, T+\delta) \subset L^{2(\alpha +1)}(0,T)$ and
$H^2 \subset L^{2(\alpha +1)}(\RR^3)$. Hence $(t,\omega) \mapsto g_j(t) Z(\omega) \in L^{2(\alpha +1)}(\Omega_T \times \RR^3) \cap
L^4(\Omega ; L^2(0,T ; \tilde{H}^{1,1})$; therefore,   (iv) implies that as
$n\to \infty$, $I_{n ,j}^2 \to \int_0^T \langle \tilde{F}(s), g_j(s)\rangle ds$
weakly in  $\big( L^{2(\alpha +1)}(\Om) \big)$.  
\par
To prove the convergence of $I_{n ,j}^3$, as in \cite{Sundar} (see also \cite{CM}), let
$ \mathcal{P}_T$ denote the class of predictable processes in
$L^2(\Om_T, {\mathcal L})$ with the inner product
\begin{eqnarray*}
 (G, J)_{\mathcal{P}_T} =\EX \int_0^T \big(G(s), J(s)\big)_{\mathcal L} \; ds=
 \EX \int_0^T \mbox{\rm trace}_H (G(s)QJ(s)^*) \; ds.
\end{eqnarray*}
The map  $\mathcal{T}: \mathcal{P}_T \to  L^2(\Omega)$
 defined by $ \mathcal{T} (G)(t) =
 \int_0^T \big( G(s)  dW(s) , g_j(s)\big) $
 is linear and continuous  because of the It\^o isometry.
 Furthermore, (v) shows that for every  $G \in \mathcal{P}_T$, as $n\to \infty$,
  $\big(P_n \s(.,u_{n}(.)) \Pi_n, G\big)_{\mathcal{P}_T} \to (\tilde{S}(.),  G)_{\mathcal{P}_T}$
weakly in $L^2(\Omega)$.  Hence, as $n\to \infty$,  $\int_0^T \big( P_n \sigma(s, u_n(s)) \, \Pi_n dW(s)\, , \, g_j(s) \big)$
converges to $\int_0^T \big( \tilde{S}_s \, dW(s)\, , \, g_j(s) \big)$  . 
\par
Finally, as $n\to \infty$, $P_n u_0 =u_{n}(0) \to
u_0 $  in $H$.  By (iii), $(u_{n}(T), g_j(T))$ converges to
$(\tilde{u}(T), g_j(T))$ weakly in $L^2(\Om)$. Therefore,
as $n\to \infty$,   \eqref{equality1} 
leads to:  

\begin{align} \label{equality2}
  (\tilde{u} (T), e_j)\, f(T)
 &= \; \big( u_0,e_j\big)
 + \int_0^T \big(u(s), e_j\big)
f'(s) ds   +   \int_0^T \langle \tilde{F}(s) , g_j(s)\rangle ds  \nonumber \\
 &\qquad  + \int_0^T \big( \tilde{S}(s)dW(s), g_j(s) \big) \; a.s.
\end{align}

 Choosing $f$ in an appropriate way, we next prove a similar identity for any fixed $t\in [0,T]$. 
For $\delta >0$, $k>\frac{1}{\delta}$, $t \in [0, T]$, let $f_k \in H^1(-\delta, T+\delta)$ be such
that
 $\|f_k\|_\infty =1$,
  $f_k=1$ on $(-\delta, t-\frac{1}{k})$ and $f_k=0$ on
$\big[t, T+\delta\big)$.
 Then $f_k \to 1_{(-\delta, t)}$ in
$L^2$, and $f'_k \to -\delta_t$ in the sense of distributions.
Hence as $k\to \infty$, \eqref{equality2} written with $f:=f_k$
yields
 \[
0= \big(u_0 - {u}(t) ,e_j\big)  +  \int_0^t  \langle \tilde{F}(s), e_j\rangle ds
  +   \int_0^t \big(\tilde{S}(s)dW(s), e_j\big)
\]
for almost all  $(\omega,t)\in \Omega_T$.  Here, the weak continuity (after some modification) of ${u}(t)$ in $H$
for almost all $\om\in\Om$ is deduced by using Lemma 1.4 in Chapter III in Temam \cite{Temam}. 
Indeed, it is  easy to see that \eqref{equality2} provides weak continuity with values in  $H^{-1}$. Using the fact that the solution
 is also a.s $L^\infty(0,T ; H)$,   Lemma 1.4 from \cite{Temam} provides that the solution is a.s. in $ C_{w}([0,T]; H) $. 
   
Note that $j$ is arbitrary and $\EX \int_0^T |\tilde{S}(s)|^2_{\mathcal L} ds < \infty$; hence   for $0\leq t \leq T$
and almost every $\omega$, we deduce \eqref{eq_u_1}.
Moreover  $\int_0^t \tilde{F}(s)  ds\in H $ a.s.
 Let $f=1_{(-\delta,
T+\delta)}$; using again  \eqref{equality2}  we obtain
\begin{eqnarray*}
 \tilde{u} (T)   =   u_0 + \int_0^T \tilde{F}(s)  ds
 +  \int_0^T  \tilde{S}(s)dW(s) .
\end{eqnarray*}
This equation and \eqref{eq_u_1} yield that   $\tilde{u}(T) = u(T)$ a.s.

\medskip
\par
\noindent \textbf{Step 3:   Identification of the limits }  
In \eqref{eq_u_1} we still have to prove that   $d\PX  \otimes  ds $ a.e. on $\Om_T$,    we have:
\[ 
 \tilde{S}(s)=\s(s,u(s)) \;   \mbox{\rm and }\;
  \tilde{F}(s)=F(u(s))\, . 
 \]  
To establish these relations we use the same idea  as in \cite{MS02} (see also \cite{Sundar}).
 More precisely, we introduce a discounting factor which enables us to cancel out terms where both elements in the scalar products
depend on $t$. 
 Let $v\in {\mathcal X}$, where ${\mathcal X}$ has been defined in \eqref{calX}.  
Since $\s$ satisfies the Lipschitz condition {\bf (C)(ii)} with a constant  ${L}_2<2\nu$, we may choose $\eta\in (0,\nu)$  such that ${L}_2<2\eta$.
For this choice of $\eta$, let $C_\eta>0$ be defined by \eqref{upper_F-F_t} and   for every $t \in [0, T]$,
set
\begin{equation} \label{def_r}
 r (t)  = \int_0^t  \Big[ C_\eta \|v(s)\|_{1,1}^2 +{L}_1 \Big]\, ds. 
\end{equation}
Then almost surely,  $0\le r(t) < \infty$ for all $t \in [0, T]$.
Moreover, we also have that
\begin{equation}\label{r-proper}
r\in L^1\big(\Omega ; L^\infty(0,T)\big), \; e^{-r}\in L^\infty(\Omega_T),\;
 r'\in L^1(\Omega_T),\;
r'e^{-r}\in L^2\big(\Om;  L^1(0,T)\big).
\end{equation}
The weak  convergence in (iii) and the property $P_n u_0\to u_0$ in $H$  imply that
\begin{equation}\label{l-lim}
\EX\big( |u(T)|_{L^2}^2\, e^{-r(T)}\big)-\EX |u_0|_{L^2}^2
\leq \liminf_n \left[\EX\big(|u_{n}(T)|_{L_2}^2\, e^{-r(T)}\big)-\EX |P_nu_0|_{L^2}^2\right].
\end{equation}
We now apply  It\^o's formula  to
$ |\phi(t)|_{L^2}^2\, e^{-r(t)}$ for $\phi=u$ and $\phi=u_{n}$.
This gives the relation
\[
\EX\big( |\phi(T)|_{L^2}^2\, e^{-r(T)}\big)-\EX |\phi(0)|_{L^2}^2=\EX\int_0^Te^{-r(s)}d\left\{|\phi(s)|_{L^2}^2\right\}
-\EX\int_0^T r'(s)e^{-r(s)}|\phi(s)|_{L^2}^2 ds,
\]
which can be justified due to \eqref{r-proper} and the property
$|\phi|^2\in L^1(\Om, L^\infty((0,T))$ for both choices of $\phi$.
Using \eqref{eq_u_1}, \eqref{un_H}   and  letting $u = v  + (u-v)$
after simplification, from \eqref{l-lim} we obtain
\begin{align} \label{r1}
  \EX & \int_0^T \!\!  e^{-r(s)}\,  \big[ -r'(s)\big\{ \big|u(s)-v(s)\big|_{L^2}^2
+   2\big( u(s)-v(s)\, ,\, v(s)\big)\}
 + 2\langle \tilde{F}(s),u(s)\rangle + |\tilde{S}(s)|^2_{\mathcal L} \; \big] ds \nonumber \\
&
\leq \liminf_n  X_n,
\end{align}
where
\begin{align*}
 X_n=\EX\int_0^T e^{-r(s)}\big[ & -r'(s) \big\{ \big|u_{n}(s)-v(s)\big|_{L^2}^2
+ 2\big( u_{n}(s)- v(s)\, ,\, v(s)\big) \big\}  \\
 & + 2\langle F(u_{n}(s)),u_{n}(s)\rangle
+ | P_n \sigma (s,u_{n}(s))\Pi_n|^2_{\mathcal L} \big]\; ds .
\end{align*}
We write  $X_n=Y_n + \sum_{i=1}^3 Z_n^i$, where $Y_n$ need not converge but is non positive, 
while the sequences  $Z_n^i$, $i=1,2,3$ converge as $n\to \infty$. 
 The upper estimate  in \eqref{upper_F-F_t} and  the Lipschitz   condition {\bf (C)(ii)} 
 imply that for $s\in [0,T]$ and ${L}_2<2\eta<2\nu$:
 \begin{align*}
 &2\langle F(u_n(s)) - F(v(s))\, ,\, u_n(s)-v(s)\rangle +\big|  P_n \sigma (s,u_{n}(s))\Pi_n - P_n \sigma(s,v(s)) \Pi_n\big|^2_{\mathcal L}  \\
 &\quad \leq -2\eta \big|\nabla_h \big( u_n(s)-v(s)\big)\big|_{L^2}^2 + C_\eta \|v(t)\|_{1,1}^2 \, |u_n(s)-v(s)|_{L^2}^2 
  + |\s(s,u_n(s))-\s(s,v(s))|_{\mathcal L}^2\\
 &\quad \leq -(2\eta - {L}_2) |\nabla_h (u_n(s)-v(s)|_{L^2}^2 + \big( C_\eta \|v(s)\|_{1,1}^2 + {L}_1\big) |u_n(s)-v(s)|_{L^2}^2.
 \end{align*}
 Hence the definition of $r$ in \eqref{def_r} implies that 
 \begin{align} \label{Y_n}
Y_n \; : = \; & \EX \int_0^T e^{-r(s)}\big [-r'(s)
|u_{n}(s)- v(s)|_{L^2}^2   
 +   2 \langle F(u_{n}(s))-F(v(s)), u_{n}(s)- v(s)\rangle
  \nonumber \\
&\;  + \big| P_n\, \big[ \s(s,u_n(s)) - \s(s,v(s)) \big] \Pi_n \big|_{\mathcal L}^2 \; \Big] ds \leq 0.
\end{align}
Furthermore, $X_n=Y_n+\sum_{j=1}^3 Z_n^j$, where 
\begin{align*}
 Z_n^1 \, = \, &  \EX  \int_0^T \!\!  e^{-r(s)} \Big[ -2  r'(s) \big(u_{n}(s))- v(s), v(s) \big)
 + 2\langle F(u_{n}(s)), v(s) \rangle  + 2 \langle F(v(s)),u_{n}(s)\rangle
\\
\; & \qquad \qquad  -  2\langle F(v(s)), v(s)\rangle
+  2 \big( P_n \s(s, u_n(s)) \Pi_n\, , \, \s(s,v(s)) \big)_{\mathcal L}  \Big]\,  ds ,
 \\
 Z_n^2  \, = \, &2 \, \EX  \int_0^T \!\! e^{-r(s)}  \big( P_n \s(s, u_n(s)) \Pi_n \, , \, P_n \s(s,v(s)) \Pi_n - \s(s, v(s)) \big)_{\mathcal L}\;   ds, 
\\
Z_n^3 \, =\, & -  \,  \EX \int_0^T e^{-r(s)}  \big| P_n \s(s,v(s)) \Pi_n \big|_{\mathcal L}^2 \; ds.
\end{align*}
 We next study the convergence of $Z_n^j, j=1,2,3$,  and first prove that $\tilde{S}(s) = \sigma(s, u(s)) $  a.e. on $\Omega_T$. 
The definition of ${\mathcal X}$ and \eqref{r-proper} imply that $r' e^{-r} v \in L^2\big(\Omega; L^1(0,T; \tilde{H}^{0,1})\big)$.
Hence the weak star convergence (ii) implies that as $n\to \infty$:
\[ \EE\int_0^T e^{-r(s)} r'(s) \big( u_n(s)-v(s) \, , \, v(s) \big) ds \to \EE\int_0^T 
e^{-r(s)} r'(s) \big( u(s)-v(s) \, , \, v(s) \big) ds.\]
Since \eqref{maj_E_F} implies that $F(v)\in \big( L^4(\Omega;L^2(0,T;\tilde{H}^{1,1})) \cap L^{2(\alpha +1)}(\Omega_T\times \RR^3)\big)^*$,
the weak convergence (i) implies that $\EE\int_0^T e^{-r(s)} \langle F(v(s)), u_n(s)\rangle ds \to \EE\int_0^T e^{-r(s)} \langle F(v(s)), u(s)\rangle ds$. 
Since $v\in L^4(\Omega;L^2(0,T;\tilde{H}^{1,1})) \cap L^{2(\alpha +1)}(\Omega_T\times \RR^3)$, the weak convergence (iv) implies that
$\EE\int_0^T e^{-r(s)} \langle F(u_n(s)), v(s)\rangle ds \to \EE\int_0^T e^{-r(s)} \langle \tilde{F}(s), v(s)\rangle ds$.  Finally, the weak convergence
(v) implies that  as $n\to \infty$:
\[ 
 \EE\int_0^T \!\! e^{-r(s)} \big( P_n \s(s,u_n(s))\Pi_n\, , \, \s(s,v(s))\, \big)_{\mathcal L}\; ds 
\;  \to \; 
\EE\int_0^T\!\! e^{-r(s)}
\big( \tilde{S}(s)\, , \, \s(s,v(s)) \big)_{\mathcal L}\; ds.  
\] 
Hence as $n\to \infty$, 
\begin{align}   \label{lim_Z1}
Z^1_n \to  \EE\int_0^T e^{-r(s)}& \Big[ -2 r'(s) \big( u(s)-v(s), v(s)\big) + 2 \langle \tilde{F}(s),v(s)\rangle 
+ 2\langle F(v(s)), u(s)\rangle  \nonumber \\
& - 2 \langle F(v(s)),v(s)\rangle + 2
\big( \tilde{S}(s)\, , \, \s(s,v(s)) \big)_{\mathcal L}\; \Big] \, ds.
\end{align}
For almost every $(\omega,t)\in \Omega_T$ and any orthonormal basis $\psi_j$ of $H_0$, 
$ \sum_{j\geq n+1} q_j | {\s}(s,v(s)) \psi_j|_{L^2}^2$ converges to $0$  as $n\to \infty$. 
This sequence is dominated by $|\s(s,v(s))|_{\mathcal L}^2$ which belongs to $L^1(P)$ by means of the growth condition \eqref{growth_tilde_LQ}
 and the definition of ${\mathcal X}$. 
Furthermore, the inequality $|P_n \s(s,u_n(s))\circ \Pi_n |_{\mathcal L}\leq |\s(s,u_n(s))|_{\mathcal L}$,  the growth condition \eqref{growth_tilde_LQ}, \eqref{apriori_4} and 
the Cauchy-Schwarz inequality yield
\[ Z_n^2 \leq \Big( \EE\int_0^T e^{-r(s)} |P_n \s(s,u_n(s))\circ \Pi_n|_{\mathcal L}^2 ds \Big)^{\frac{1}{2}} \Big( \EE\int_0^T e^{-r(s)}
\sum_{j\geq n+1} q_j |\s(s,v(s)) \psi_j|_{L^2}^2 ds \Big)^{\frac{1}{2}}.
\] 
In the above right hand side, the first factor remains bounded, while as $n\to \infty$ 
 the second one converges to 0 by  the dominated convergence theorem. This yields
\begin{equation} \label{lim_Z2}
Z_n^2 \to 0 \; \mbox{\rm as } \; n\to \infty.
\end{equation}
Finally, the definition of $P_n$,  $\Pi_n$ and the growth condition \eqref{growth_tilde_LQ} imply that for a.e. $(\omega,s)\in \Omega_T$,
\begin{align*}
 \Big| \sum_{j\geq 1} q_j |P_n  \s(s,v(s)) \Pi_n \psi_j|_{L^2}^2 -  |\s(s,v(s))|_{\mathcal L}^2 \Big| 
 \leq  &\;  2 \sum_{j\geq n+1} q_j | \s(s,v(s))  \psi_j|_{L^2}^2  \\
 & \; + 2 |(P_n-{\rm Id}) \s(s,v(s))|_{\mathcal L}^2 \to 0 
 \end{align*}  
as $n\to \infty$. Furthermore,  the growth conditon \eqref{growth_tilde_LQ} implies that for every $n$:
\begin{align*}
 \Big| \sum_{j\geq 1} &q_j |P_n \s(s,v(s)) \Pi_n \psi_j|_{L^2}^2 -   | \s(s,v(s)) |_{\mathcal L}^2 \Big|
 \leq 2|\s(s,v(s))|_{\mathcal L}^2 \\ 
&\qquad \leq    2\big[ {K}_0 + {K}_1|v(s)|_{L^2}^2  +  {K}_2 |\nabla_h v(s)|_{L^2}^2 \big] \in L^1(\Omega_T).
\end{align*}
Hence the dominated convergence theorem implies that
\begin{equation} \label{lim_Z3}
Z_n^3 \to -\EE\int_0^T e^{-r(s)}  \, |\s(s,v(s))|_{\mathcal L}^2 ds.
\end{equation}
Using the inequalities \eqref{r1}--\eqref{lim_Z3} we obtain: 
\begin{align}  \label{maj_FS_uv}
\EE\int_0^T e^{-r(s)} \Big[ &-r'(s) |u(s)-v(s)|_{L^2}^2 +2 \langle \tilde{F}(s) - F(v(s))\, , \, u(s)-v(s)\rangle   
\nonumber \\
& + |\tilde{S}(s) - \s(s,v(s))|_{\mathcal L}^2 \Big] ds \leq 0.
\end{align}
Let $v=u\in {\mathcal X}$; then we deduce that for almost every $(\omega,s)\in \Omega_T$ we have 
$\tilde{S}(s)=\s(s,u(s))$. 
\par 
 Using another choice of $v$, we next trove that $\tilde{F}(s) = F(u(s))$ a.e. on $\Omega_T$.
Let $\lambda\in  \RR$ and $\tilde{v}\in {\mathcal X}$ and set $v_\lambda = u+\lambda \tilde{v}\in {\mathcal X}$. Then if $r_\lambda$ is defined
in terms on $v_\lambda$ using \eqref{def_r}, the inequality \eqref{maj_FS_uv} yields
\begin{align} \label{maj_F_lambda}
\lambda^2 \EE\int_0^T e^{-r_\lambda(s)} r'_\lambda(s) |\tilde{v}(s)|_{L^2}^2 ds +2\lambda \EE\int_0^T e^{-r_\lambda(s)} \langle \tilde{F}(s) - F(u(s)) \, ,\, 
\tilde{v}(s)\rangle ds  \nonumber \\
+ 2\lambda \EE\int_0^T e^{-r_\lambda(s)} \langle F(u(s))- F(v_\lambda(s)) \, , \, \tilde{v}(s)\rangle ds \leq 0.
\end{align}
The upper estimate \eqref{upper_F-F_t}  and H\"older's inequality imply that for $\eta \in (0,\nu)$ and $\lambda\in (0,1]$, 
\begin{align*}
\big|& \langle F(v_\lambda(s)) - F(u(s))\, , \, \tilde{v}(s)\rangle \big| =
 \frac{1}{|\lambda|} \big| \langle F(v_\lambda(s)) - F(u(s))\, , \, v_\lambda(s) - u(s) \rangle \big| \leq |\lambda | \phi(t),
\end{align*}
where by H\"older's inequality we have 
\begin{align*}
 \phi(t) = &\eta \|\tilde{v}(t)\|_{1,1}^2 + 2 C_\eta \big( \|u(s)\|_{1,1}^2 + \|\tilde{v}\|_{1,1}^2\big)   |\tilde{v}(s)|_{L^2}^2  \\
&+ C a\kappa \big( \|u(s)\|_{L^{2(\alpha +1)}}^{2\alpha} + \|\tilde{v}(s)\|_{L^{2(\alpha +1)}}^{2\alpha}\big) \|\tilde{v}(s)\|_{L^{2(\alpha +1)}}^{2}.
\end{align*}
Using once more H\"older's inequality, we deduce that 
\begin{align*}
& \EE\int_0^T \!\! \phi(t) dt \leq \;  C \EE \Big[  \int_0^T\!\! \|\tilde{v}(s)\|_{1,1}^2 ds + 
\Big[ \Big( \Big| \int_0^T\!\! \|u(s)\|_{1,1}^2 ds \Big|^2 \Big)^{\frac{1}{2}} + \Big( \Big| \int_0^T\!\! \|\tilde{v}(s)\|_{1,1}^2 ds \Big|^2 \Big)^{\frac{1}{2}} \Big]
\\
&\times \Big(  \sup_{s\in [0,T]} |\tilde{v}(s)|_{L^2}^4 \Big)^{\frac{1}{2}}  
+ C\Big[ \|u\|_{L^{2(\alpha +1)}(\Omega_T\times \RR^3)}^{2\alpha} + \|\tilde{v}(s)\|_{L^{2(\alpha +1)}(\Omega_T\times \RR^3)}^{2\alpha}\Big]
 \|\tilde{v}\|_{L^{2(\alpha +1)}(\Omega_T\times \RR^3)}^2 <\infty.
\end{align*}
Since $r_\lambda(s)\geq 0$, the dominated convergence theorem implies that 
\[ \EE\int_0^T e^{-r_\lambda(s)} \langle F(u(s))-F(v_\lambda(s)), \tilde{v}(s)\rangle ds \to 0 \; \mbox{\rm as } \; \lambda \to 0.\]
Furthermore, since $\tilde{F}(s) - F(u(s))\in \big( L^4(\Omega;L^2(0,T;\tilde{H}^{1,1})) \cap L^{2(\alpha +1)}(\Omega_T\times \RR^3)\big)^*$, using once
more the dominated convergence theorem we deduce that as $\lambda \to 0$:
\[ \EE\int_0^T e^{-r_\lambda(s)}\langle \tilde{F}(s) - F(u(s))\, ,\, \tilde{v}(s)\rangle ds \to \EE\int_0^T e^{-r_0(s)} \langle \tilde{F}(s) - F(u(s))\, , \, 
\tilde{v}(s)\rangle ds.\]
Dividing \eqref{maj_F_lambda} by $\lambda$ and letting $\lambda \to 0^+$ and $\lambda \to 0^-$, we deduce that for every $\tilde{v}\in {\mathcal X}$,
\[ \EE\int_0^T e^{-r_0(s)} \langle \tilde{F}(s) - F(u(s))\, , \, \tilde{v}(s)\rangle ds =0.\]
This implies that $\tilde{F}(s) = F(u(s))$ a.e. on $\Omega_T$. 
\par

\noindent {\bf Step 4:   Continuity of the solution}  We next prove that $u\in C([0,T];H)$ a.s.  
The proof, based on some regularization of the solution, is similar to that in \cite{CM}; 
however, the functional setting  is different which requires some changes. 
Set $A= P_{\rm div} \Delta$;   then  
 $e^{-\de A}$ maps $(\tilde{H}^{1,1})^*\subset H^{-2}$ to H   for any $\de>0$. 
Furthermore, the Gagliardo Nirenberg inequality implies that $H^2(\RR^3) \subset L^{2(\alpha +1)}(\RR^3)$, so that the semi-group $e^{-\de A}$  
also maps  $L^{\frac{2 (\alpha +1)}{2\alpha +1}} (\RR^3)=( L^{2(\alpha +1)}(\RR^3))^* \subset H^{-2}$ to $H$.  
Since $|u(s)|^{2\alpha} u(s) \in L^{\frac{2 (\alpha +1)}{2\alpha +1}} (\RR^3)$ for almost every $(\omega ,t)$, 
 we deduce that  $e^{-\delta A}\int_0^. F(u(s))\, ds $ belongs to $C([0,T],H)$.
Finally, \eqref{growth_tilde_LQ} in the growth condition  {\bf C(i)}  implies
$\EE \int_0^T |e^{-\delta A} \sigma(s,u)(s)|^2_{L_Q}\, ds <+\infty$. Thus
$\int_0^. e^{-\delta A}  \s(s,u(s))\, dW(s)$ belongs to $C([0,T],H)$ a.s. (see e.g. \cite{PZ92}, Theorem 4.12).
 Therefore, it  is sufficient to prove that a.s.  $e^{-\delta A} u$ converges to $u$ uniformly on the time interval $[0,T]$, that is
\begin{equation}\label{conv-cont}
\lim_{\delta\to0} \EX\left\{ \sup_{0\leq t\leq T} |u(t)-e^{-\de A}u(t)|_{L^2}^2\right\}=0.
\end{equation}
Let $G_\de= Id-e^{-\de A}$ and apply It\^o's   formula to
$|G_\de u(t)|_{L^2}^2$. This yields
\begin{eqnarray} \label{ito-g-de}
 |G_\de u(t)|_{L^2}^2 & = &  |G_\de u_0|_{L^2}^2  -2\nu \int_0^{t}\!\! \| G_\de u(s)\|_{\tilde{H}^{1,0}}^2 ds
+ 2  I(t)   +    \int_0^{t} \!\! | G_\de \sigma(u(s))|_{\mathcal L}^2\, ds
 \nonumber \\
&& \; - 2\int_0^{t}\!\! \big\langle  B(u(s)) +2a \, |u(s)|^{2\alpha} u(s) \, , \, 
    G^2_\de u(s)\big\rangle \, ds,
\end{eqnarray}
where $I(t)=  \int_0^{t} \big( G_\de\sigma(u(s))  dW(s),  G_\de u(s)\big)$.
By the Burkholder-Davies-Gundy and Schwarz inequalities we have
\begin{eqnarray*}
\EX\sup_{0\leq t\leq T} |I(t)| &\le&
C\EX\left( \int_0^{T}\!\! |G_\de u(s)|^2
 |G_\delta \, \sigma(s,u(s))|^2_{\mathcal L} ds\right)^{1/2}  \\
&\leq  &  \frac{1}{2}\;  \EX\sup_{0\leq t\leq T}  |G_\de u(t)|_{L^2}^2 + \frac{C^2}{2} \EX
\int_0^{T} \!\! | G_\de \sigma(s,u(s))|_{\mathcal L}^2\, ds .
 \end{eqnarray*}
Hence  for some constant $C$, \eqref{ito-g-de} yields
\begin{align*}
\EX\sup_{0\le t\le T } | & G_\de u(t)|_{L^2}^2 \le  2 \, |G_\de u_0|_{L^2}^2
+ C\, \EX \int_0^{T}\!\! | G_\de \sigma(s,u(s))|_{\mathcal L}^2 ds \\
& \; +\, 4\,  \EX \int_0^{T}\!\!  \left|\big\langle  B(u(s))+  
  2a |u(s)|^{2\alpha} u(s) ,  G^2_\de u(s)\big\rangle \right|  \, ds .
\end{align*}
Since for every $u\in H$, $|G_\de u|_{L^2} \to 0$   as $\delta\to 0$
and $\sup_{\delta>0} |G_\delta|_{L(H,H)}\leq 2$,   we deduce that if $\{\varphi_k\}$
denotes an orthonormal basis in $H$, then
$ | G_\de \sigma(s,u_{h}(s))Q^{1/2} \varphi_k|_{L^2}^2 \to 0$
for every $k$  and almost every $(\omega,t)\in \Omega\times [0,T]$.
Since
\[
{\displaystyle \sup_{\delta>0 }
|G_\delta \sigma(s,u(s))|_{\mathcal L}^2 \leq
  \sum_k  \sup_{\delta >0}  | G_\de \sigma(s, u(s) )Q^{1/2} \varphi_k|_{L^2}^2
\leq C |\sigma(s,u(s))|_{\mathcal L}^2\in L^1(\Omega\times [0,T])},
\]
the Lebesgue  dominated convergence theorem
 implies  $  \EX \int_0^{T}\!\! | G_\de \sigma(s,u(s))|_{\mathcal L}^2 ds \to 0$.
Given  $u\in \tilde{H}^{1,1}\subset H^1$ we have  $\|G_\de^2 u\|_{H^1} \to 0$ as $\delta\to 0$; furthermore,
 $\sup_{\delta >0} |G_\delta|_{L(\tilde{H}^{1,1},\tilde{H}^{1,1})}\leq 2 $.  Hence
 $ \big\langle  B(u(s))\, , \,  G^2_\de u(s)\big\rangle \to 0$ for almost every
  $(\omega,s)\in \Omega_T$.
   Furthermore, $e^{-\delta A}$ is a bounded operator of $L^{2(\alpha +1)}$ (see e.g. the Appendix of \cite{PZ92}).
Hence
 $ \big\langle  |u(s)|^{2\alpha} u(s) \, , \,  G^2_\de u(s)\big\rangle \to 0$ for almost every $(\omega,s)\in \Omega_T$.
Therefore, as above, the Lebesgue
  dominated convergence theorem concludes the proof of \eqref{conv-cont}.


\par
\medskip

\noindent {\bf Step 5: Pathwise uniqueness of the solution     }
 We finally prove that if ${L}_2$ is small enough, there exists a unique process   in 
 $ {\mathcal X}$   and a.s. in $C([0,T];H)$  which is a weak solution to \eqref{saNS}. Let $u,v\in {\mathcal X}$ be solutions
 to \eqref{saNS}  and 
belong a.s. to $C([0,T];H)$.  For every $N$ set
\[ \tau_N=\inf \{ s\geq 0 : |u(s)|_{L^2} \vee |v(s)|_{L_2} \geq N \} \wedge T  .\]
Since $|u(.)|_{L^2}$ and $|v(.)|_{L^2}$ are a.s. bounded on $[0,T]$ by the definition of ${\mathcal X}$, we deduce that 
a.s. $\tau_N\to T$ as $N\to \infty$. 
Set $U=u-v$; since $L_2<2\nu$, we may choose   $\eta \in (0,\nu)$  such that $ L_2 <2\eta  < 2\nu$. 
Let  $C_\eta$ be a constant defined in \eqref{upper_F-F_t};
  as in the argument of Step 4, despite of the lack of regularity of $u$, we may apply It\^os formula to the square of the $H$ norm
 and deduce
\[    e^{-2\, C_\eta \int_0^{t\wedge \tau_N} \|v(r)\|_{1,1}^2 dr}   \, |U(t\wedge \tau_N)|_{L^2}^2 = 2 M(t\wedge \tau_N) 
+ \int_0^{t\wedge \tau_N} \psi(s) ds,\]
where
\begin{align*}
M(\tau)&= \int_0^\tau e^{ -2C_\eta \int_0^s \|v(r)\|_{1,1}^2 dr }   \big( U(s)\, , \, \big[ \s(s,u(s)) - \s(s,v(s))\big] dW(s) \big),\\
\psi(s) &= e^{ -2C_\eta \int_0^s \|v(r)\|_{1,1}^2 dr } \big[ -2C_\eta \|v(s)\|_{1,1}^2 |U(s)|_{L_2}^2 +2 \langle F(u(s))-F(v(s))\, ,\, U(s)\rangle \\
&\quad + |\s(s,u(s))-\s(s,v(s))|_{\mathcal L}^2 \; \big] .
\end{align*}
We at first check that the process $M$ is a square integrable martingale. Indeed, the Cauchy-Schwarz and the Young inequalities, 
the Lipschitz condition {\bf (C)(ii)} and the definition of ${\mathcal X}$
imply that
\begin{align*}
 \EE \int_0^T  & e^{-4C_\eta \int_0^s  \|v(r)\|_{1,1}^2 dr }   |U(s)|_{L^2}^2 \,  \big| \s(s,u(s)) - \s(s,v(s))\big|_{\mathcal L}^2 ds \\
 &\qquad \leq \;  \EE \int_0^T  |U(s)|_{L_2}^2 \big[ {L}_1 |U(s)|_{L_2}^2 +{L}_2 |\nabla_h U(s)|_{L^2}^2 \big] ds \\
 & \qquad \leq \; C\;  \EE \Big( \sup_{t\in [0,T]} |U(s)|_{L^2}^4 \Big) + C\;  \EE \Big( \Big| \int_0^T |\nabla_h U(s)|_{L^2}^2 ds \Big|^2 \Big) <\infty.
\end{align*}
Furthermore, the upper estimate \eqref{upper_F-F_t} and the Lipschitz condition {\bf (C)(ii)} imply that for ${L}_2<2\eta<2\nu$, we have
\[ |\psi(s)|\leq  \big( {L}_2 -2\eta\big) |\nabla_h U(s)|_{L^2}^2 + {L}_1 |U(s)|_{L^2}^2 \leq {L}_1 |U(s)|_{L^2}^2.
\]
Hence taking expected values, we deduce that for any $t\in [0,T]$:
\[ \EE\Big(  e^{-2\, C_\eta \int_0^{t\wedge \tau_N} \|v(r)\|_{1,1}^2 dr}   \, |U(t\wedge \tau_N)|_{L^2}^2\Big) \leq \int_0^t 
\EE\Big(  e^{-2\, C_\eta \int_0^{s\wedge \tau_N} \|v(r)\|_{1,1}^2 dr}   \, |U(s\wedge \tau_N)|_{L^2}^2\Big)ds.
\]
The Gronwall lemma implies that for every $t\in [0,T]$, we have  $U(t\wedge \tau_N)=0$ a.s. Since
$U$ a.s. belongs to $C([0,T];H)$,  this completes the proof as $N\to \infty$.
\end{proof}

\subsection{Examples}  \label{examples}

 Here, we provide two examples of coefficients $\s$ which satisfy condition {\bf (C)} 

Let $\{ \psi_k, k\geq 1\}$ denote an orthonormal basis of $H_0=Q^{\frac{1}{2}} \tilde{H}^{0,1}$ and for $t\in [0,T]$,  $u\in \tilde{H}^{1,1}$ 
and $\psi \in H_0$; set
\[ \s(t,u) \psi (x):= \sum_{k=1}^{\infty}  \big( \psi, \psi_{k} \big)_0\,  \sigma_k(t,x,u(x),\nabla_h u(x)), \]
where $\sigma_k:[0,T]\times \RR^3 \times \RR^3 \times \RR^6 \to \RR^3$ are measurable functions with appropriate
regularity and $\nabla_h = (\partial_1 u, \partial_2 u)$. 

\smallskip
{\bf Example 1:} 
For $t\in [0,T]$, $x\in \RR^3$, $y\in \RR^3$ and $z=(\zeta,\tilde{\zeta}) $ for $\zeta, \tilde{\zeta} \in \RR^3$  set
\[ \s_k(t,x,y,z) = \sigma_{k,0}(t,x) + \s_{k,1}(t,x) y + \s_{k,2}(t,x) \zeta + \tilde{\s}_{k,2}(t,x) \tilde{\zeta}, \]
where    $\sigma_{k,0}(t,.)\in \tilde{H}^{0,1}$, $\s_{k,1}(t,.)$, $\s_{k,2}(t,.)$,  $\tilde{\s}_{k,2}(t,.)$, $\partial_3\s_{k,0}(t,.)$; 
$\partial_3 \s_{k,2}(t,.)$ and $\partial_3 \tilde{\s}_{k,2}(t,.)$
belong to $ L^\infty(\RR^3) $. Suppose furthermore that:
\begin{align*}
\sup_{t\in [0,T]} &\sum_{k\geq 1} \Big[  \|\s_{k,0}(t,.)\|_{0,1}^2   +   \|\s_{k,1}(t,.)\|_{L^\infty}^2 + \|\s_{k,2}(t,.)\|_{L^\infty}^2 
+  \|  \tilde{\s}_{k,2}(t,.)\|_{L^\infty}^2  \Big] <\infty,      \\
\sup_{t\in [0,T]} & \sum_{k\geq 1} \Big[ 
\|\partial_3 s_{k,1}(t,x)\|_{L^\infty}^2 + \|\partial_3 \s_{k,2}(t,x)\|_{L^\infty}^2
+  \|\partial_3 \tilde{\s}_{k,2}(t,x)\|_{L^\infty}^2 \Big] <\infty. 
\end{align*}
Then condition \eqref{growth_tilde_LQ}  holds with $K_0=3 \sup_t \sum_k  |\s_{k,0}(t,.)|_{L^2}^2$, 
$K_1=3\sup_t \sum_k \|\s_{k,1}(t,.)\|_{L^\infty}^2 $
and $K_2 = 3\sup_t \sum_k \big( \|\s_{k,2}(t,.)\|_{L^\infty}^2 + \| \tilde{\s}_{k,2}(t,.)\|_{L^\infty}^2 \big)$.  
The Lipschitz condition {\bf (C)(ii)} holds with $L_1=\frac{2}{3}K_1$ 
and $L_2=\frac{2}{3}K_2$. 

Taking the partial derivative with respect to $x_3$, we deduce that \eqref{growth_LQ} holds with 
$$\tilde{K}_0=5\sup_t \sum_k \|\s(t,.)\|_{0,1}^2,$$
$$\tilde{K}_1=K_1+ 5 \sup_t \sum_k \Big( \|\s_{k,1}(t,.)\|_{L^\infty}^2 + \|\partial_3 \s_{k,1}(t,.)\|_{L^\infty}^2 \Big)$$
and finally
$$\tilde{K}_2=K_2+ 5 \sup_t \sum_k \Big( \|\s_{k,2}(t,.)\|_{L^\infty}^2 + \|\tilde{\s}_{k,2}(t,.)\|_{L^\infty}^2 +  \|\partial_3 \s_{k,2}(t,.)\|_{L^\infty}^2
+\|\partial_3 \tilde{\s}_{k,2}(t,.)\|_{L^\infty}^2 \Big).$$

\smallskip

\par

\noindent {\bf Example 2} The following example has some more general  Lipschitz structure.\\
For $t\in [0,T]$, $x\in \RR^3$, $y,y'\in \RR^3$  and $z,z'\in \RR^6$ set
\begin{align*} 
& |\s_k(t,x,y,z) - \s_k(t,x,y',z')| \leq C_{k,1}(t,x) |y-y'| + C_{k,2}(t,x) |z-z'|,\\
&|\partial_{x_3} \sigma_k(t,x,y,z)| \leq \tilde{C}_{k,0}(t,x) + \tilde{C}_{k,1}(t,x) |y| + \tilde{C}_{k,2}(t,x) |z|,
\end{align*}
where $\s_k(t,.,0,0)$ and $ \tilde{C}_{k,0} $ belong to $ L^2(\RR^3)$, while $C_{k,1}(t,.)$, $C_{k,2}(t,.)$, $\tilde{C}_{k,1}(t,.)$ and
$\tilde{C}_{k,2}(t,.)$ belong to $\big[ L^\infty(\RR^3)\big]^3$.  Moreover, we suppose that 
\begin{align*}
&\sup_{t\in [0,T]} \sum_{k\geq 1} \sup_{(x,y,z)\in \RR^{12}} |\nabla_y \s_k(t,x,y,z)|^2 = \tilde{C}_3<\infty,\\
&\sup_{t\in [0,T]} \sum_{k\geq 1} \sup_{(x,y,z)\in \RR^{12}} |\nabla_z \s_k(t,x,y,z)|^2 = \tilde{C}_4<\infty, 
\end{align*}
and 
\begin{align*}
\sup_{t\in [0,T]} &\sum_{k\geq 1} \Big( |\s_k(t,.,0,0)|_{L^2}^2 +  |\tilde{C}_{k,0}(t,.)|_{L^2}^2 \big) < \infty\\
\sup_{t\in [0,T]} &\sum_{k\geq 1} \Big(
\|C_{k,1}(t,.)\|_{L^\infty}^2 + \|C_{k,2}(t,.)\|_{L^\infty}^2  + \|\tilde{C}_{k,1}(t,.)\|_{L^\infty}^2 + \|\tilde{C}_{k,2}(t,.)\|_{L^\infty}^2  \big) <\infty.
\end{align*}
The  growth condition \eqref{growth_tilde_LQ} holds with:
\[ K_0=3\sup_t \sum_k |\s_k(t,.,0,0)|_{L^2}^2 , \; K_1=3\sup_t\sum_k \|C_{k,1}(t,.)\|_{L^\infty}^2, \; 
K_2=3\sup_t \sum_k \|C_{k,2}(t,.)\|_{L^\infty}^2.
\]
 The Lipschitz condition {\bf (C)(ii)} holds with $L_1=\frac{2}{3}K_1$ and $L_2=\frac{2}{3}K_2$. 
Taking partial derivatives with respect to $x_3$ yields that the growth condition \eqref{growth_LQ} is satisfied with: 
\begin{align*}
\tilde{K}_0=& K_0 + 5 \sup_t \sum_k |\tilde{C}_{k,0}(t,.)|_{L^2}^2, \\
\tilde{K}_1=& K_1+ 5 \tilde{C}_3 + \sup_t\sum_k \big( 3 \|C_{k,1}(t,.)\|_{L^\infty}^2 +  5   \|\tilde{C}_{k,1}(t,.)\|_{L^\infty}^2\big)  ,\\
\tilde{K}_2= & K_2+ 5 \tilde{C}_4 +  \sup_t \sum_k \big( 3 \|C_{k,2}(t,.)\|_{L^\infty}^2 + 5  \|\tilde{C} _{k,2}(t,.)\|_{L^\infty}^2 \big).
\end{align*}

\section{Large deviations} 
 Recall that the set of processes ${\mathcal X}$ has been defined in \eqref{calX} .
For $\epsilon >0$, let $u^\epsilon \in {\mathcal X}$ such that $u^\epsilon \in C([0,T]; H) $ a.s. 
 denote the solution of \eqref{saNS} where the noise intensity is multiplied by a small parameter $\epsilon >0$, that is 
\begin{equation}   \label{u_e}
u^\epsilon(t)=u_0 + \int_0^t \big[ \nu A_h u^\epsilon(s) - B(u^\epsilon(s)) - a |u^\epsilon(s)|^{2\alpha} u^\epsilon(s) \big] ds + \sqrt{\e} \int_0^t
\s(s,u^\epsilon(s)) dW(s).
\end{equation} 
For any constants $K_i$, $\tilde{K}_i$ and $\tilde{L}_i$ in Condition {\bf (C)}, for $\epsilon$ small enough there is a unique solution to \eqref{u_e}
which is denoted $u^\epsilon={\mathcal G}^\epsilon(\sqrt{\epsilon} W)$ for some Borel-measurable function ${\mathcal G}^\epsilon : C([0,T];\tilde{H}^{0,1}) 
\to X$.

In this section we prove that  $u^\epsilon$ satisfies a large deviations principle in the space $Y:= C([0,T];H)\cap L^2(0,T;\tilde{H}^{1,0})$.
We use the weak convergence approach introduced in \cite{BD00} and \cite{BD07}.
We  at first prove apriori estimate for stochastic control equations deduced from
\eqref{saNS} by shifting $W$ by some random element.
 To describe a set of admissible random shifts,   we introduce the class
 $\mathcal{A}$ as the  set of $H_0-$valued
$(\cF_t)-$predictable stochastic processes $\phi$ such that
$\int_0^T |\phi(s)|^2_0 ds < \infty, \; $ a.s.
Let
\[S_M=\Big\{\phi \in L^2(0, T; H_0): \int_0^T |\phi(s)|^2_0 ds \leq M\Big\}.\]
The set $S_M$ endowed with the following weak topology is a
  Polish space (complete separable metric space)
\cite{BD07}:
$ d_1(\phi, \psi)=\sum_{i=1}^{\infty} \frac1{2^i} \big|\int_0^T \big(\phi(s)-\psi(s),
\tilde{e}_i(s)\big)_0 ds \big|,$
where $
\{\tilde{e}_i(s)\}_{i=1}^{\infty}$ is an  orthonormal basis
for $L^2(0, T; H_0)$.
Define
\begin{equation} \label{AM}
 \mathcal{A}_M=\{\phi \in \mathcal{A}: \phi(\om) \in
 S_M, \; a.s.\}.
\end{equation}

Let $\mathcal{B}(Y)$ denote the  Borel $\s-$field of the Polish space $Y$ endowed with the metric
 associated with the norm 
 \begin{equation} \label{norm_Y}
  \|u\|_Y=\sup_{t\in [0,T]} |u(t)|_{L^2} + \big( \int_0^T \|u(t)\|_{1,0}^2 ds \Big)^{\frac{1}{2}}.
  \end{equation} 
We recall some classical   definitions;
by convention the infimum over an empty set is  $ +\infty$.
\begin{defn}
   The random family
$(u^\e )$ is said to satisfy a large deviation principle on
$Y$  with the good rate function $I$ if the following conditions hold:\\
\indent \textbf{$I$ is a good rate function.} The function function $I: Y \to [0, \infty]$ is
such that for each $M\in [0,\infty[$ the level set $\{\phi \in Y: I(\phi) \leq M
\}$ is a    compact subset of $Y$. \\
 For $A\in \mathcal{B}(Y)$, set $I(A)=\inf_{u \in A} I(u)$.\\
\indent  \textbf{Large deviation upper bound.} For each closed subset
$F$ of $Y$:
$$
\lim\sup_{\e\to 0}\; \e \log \PX(u^\e \in F) \leq -I(F).
$$
\indent  \textbf{Large deviation lower bound.} For each open subset $G$
of $Y$:
$$
\lim\inf_{\e\to 0}\; \e \log \PX(u^\e \in G) \geq -I(G).
$$
\end{defn}

For all $\phi \in L^2(0, T ;  H_0)$, we will prove that there exists a unique solution let $u^0_\phi\in Y$  of
the deterministic control equation (\ref{dcontrol}) with initial condition
$u^0_\phi(0)=u_0\in L^4(\Omega, \tilde{ H}^{0,1})$:
\begin{eqnarray} \label{dcontrol}
d u^0_\phi(t) + [-\nu A_h u^0_\phi(t) +B(u^0_\phi(t)) + a |u^0_\phi(t)|^{2\alpha} u^0_\phi(t) ]dt =\s(t,u^0_\phi(t)) \phi(t) dt .
\end{eqnarray}
Let ${\mathcal C}_0=\{ \int_0^. \phi(s)ds \, :\, \phi \in L^2(0,T ;  H_0)\}  \subset C([0, T], H_0)$.
Define ${\mathcal G}^0:   C([0, T], H_0)  \to Y$ by
$ {\mathcal G}^0(\Phi)=u_\phi $ for $  \Phi=\int_0^. \phi(s)ds \in {\mathcal C}_0$
and ${\mathcal G}^0(\Phi)=0$ otherwise.

 Since  the argument below requires some information about the difference of the
solution at two different times, we need an additional assumption about the
regularity of the map $\sigma(.,u)$. Furthermore, for technical reasons,  we will suppose that condition {\bf (C)} holds with 
stronger growth and Lipschitz conditions, which forbid any gradient. This is summarized in the following:
\smallskip
\par

 \noindent  {\bf Condition (C')}\\
\indent {\bf (i)} ({\it Stronger growth and Lipschitz conditions}):  
The coefficient $\sigma$ satisfies condition {\bf (C)}  with the constants $K_2=\tilde{K}_2 =
L_2 =0$.\\ 
\indent {\bf (ii)}  ({\it Time H\"older regularity of $\sigma$}):
There exist  constants $\gamma>0$ and $C\geq 0$ such that for $t_1, t_2\in [0,T]$
and $u\in \tilde{H}^{1,0}$:
\[ |\sigma(t_1,u)-\sigma(t_2,u)|_{\mathcal L} \leq C \, \left( 1+ \| u\|_{1,0} \right) |t_1-t_2|^\gamma.\]

The following theorem is the main result of this section.

\begin{theorem}\label{PGDue}
Suppose  that condition 
 {\bf (C')} is satisfied and that
$u_0\in \tilde{\mathcal H}^{0,1}$.
Then the solution  $(u^\e)$ to \eqref{u_e} satisfies the large deviation principle in
$Y=C([0, T]; H) \cap L^2(0, T;\tilde{  H}^{1,0})$,  with the good rate function
\begin{eqnarray} \label{ratefc}
 I_\xi (u)= \inf_{\{\phi \in L^2(0, T; H_0): \; u ={\mathcal G}^0(\int_0^. \phi(s)ds) \}}
 \Big\{\frac12 \int_0^T |\phi(s)|_0^2\,  ds \Big\}.
\end{eqnarray}
\end{theorem}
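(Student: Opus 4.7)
I would follow the weak convergence approach of Budhiraja--Dupuis already announced in the introduction. By their general theorem, it suffices to verify two conditions:
\begin{itemize}
\item[\bf (LDP1)] For every $M<\infty$ the map $\phi\mapsto u^0_\phi = \mathcal{G}^0(\int_0^\cdot\phi(s)\,ds)$ is continuous from $S_M$ (equipped with the weak topology inherited from $L^2(0,T;H_0)$) to $Y$; equivalently, the image $K_M = \{u^0_\phi : \phi\in S_M\}$ is compact in $Y$.
\item[\bf (LDP2)] For every $M<\infty$ and any family $\{\phi^\e\}\subset\mathcal{A}_M$ converging in distribution to $\phi\in\mathcal{A}_M$ as $S_M$-valued random variables, the controlled processes $u^\e_{\phi^\e}:=\mathcal{G}^\e(\sqrt{\e}W+\int_0^\cdot\phi^\e(s)\,ds)$ converge in distribution to $u^0_\phi$ in $Y$.
\end{itemize}

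For \textbf{(LDP1)}, I would first establish well-posedness of the skeleton equation \eqref{dcontrol} in $L^\infty(0,T;\tilde H^{0,1})\cap L^2(0,T;\tilde H^{1,1})\cap L^{2(\alpha+1)}((0,T)\times \RR^3)$ by energy estimates mimicking Proposition \ref{Galerkin_n} and Theorem \ref{th_wp}; the novelty is the drift term $\s(t,u^0_\phi)\phi$, which by Condition {\bf (C')(i)} (with $K_2=\tilde K_2=0$) and Cauchy--Schwarz is controlled by $|\s(t,u^0_\phi)|_{\widetilde{\mathcal L}}|\phi(t)|_0\le C(1+\|u^0_\phi\|_{0,1})|\phi(t)|_0$, so $\int_0^T|\phi|_0^2ds\le M$ gives uniform bounds via Gronwall. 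Continuity of $\phi\mapsto u^0_\phi$ is then proved by taking $\phi_n\rightharpoonup\phi$ in $L^2(0,T;H_0)$, obtaining uniform estimates on $u^0_{\phi_n}$, and using Lemma \ref{F-F} to bound the difference $w_n=u^0_{\phi_n}-u^0_\phi$ in $H$; the term $\int_0^t(\s(s,u^0_\phi)[\phi_n-\phi],w_n)ds$ is handled by the weak convergence of $\phi_n$ together with strong compactness of the operators $\s(\cdot,u^0_\phi)$ acting on $S_M$.

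For \textbf{(LDP2)}, by Girsanov's theorem $u^\e_{\phi^\e}$ solves
\begin{equation*}
u^\e_{\phi^\e}(t) = u_0 + \int_0^t F(u^\e_{\phi^\e}(s))\,ds + \int_0^t \s(s,u^\e_{\phi^\e}(s))\phi^\e(s)\,ds + \sqrt{\e}\int_0^t \s(s,u^\e_{\phi^\e}(s))\,dW(s),
\end{equation*}
whose well-posedness and uniform-in-$\e$ moment bounds follow from the arguments sketched in the appendix. I would then prove tightness of $\{u^\e_{\phi^\e}\}$ in $Y$ by combining the anisotropic spatial estimates from Section 3 with a time-regularity estimate: Condition {\bf (C')(ii)} plus the apriori bounds yield uniform H\"older control of the drift and stochastic integral parts in $H^{-s}$ for suitable $s>0$. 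The stochastic integral vanishes as $\e\to 0$ by Burkholder--Davis--Gundy and the growth condition. Identification of the limit as $u^0_\phi$ uses the same monotonicity/discounting trick (\eqref{def_r}, \eqref{maj_FS_uv}) as in Step 3 of Theorem \ref{th_wp}.

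The \textbf{main obstacle} is that the spatial domain $\RR^3$ does not admit compact Sobolev embeddings, so tightness in $C([0,T];H)$ cannot be deduced directly from the Aubin--Lions lemma. To overcome this, I would combine two ingredients: (i) a cut-off in the $x_3$ direction together with local compactness in the horizontal variables (exploiting the horizontal smoothing provided by $\nu A_h$), and (ii) the regularization by the Stokes semigroup $e^{-\delta A}$ used in Step 4 of Theorem \ref{th_wp}, which reduces continuity in $H$ to control of the tails as $\delta\to 0$ via the uniform $\tilde H^{0,1}$ bound. A secondary difficulty is passing to the limit in the nonlinear term $|u^\e_{\phi^\e}|^{2\alpha}u^\e_{\phi^\e}$; this is handled by the monotonicity of $u\mapsto|u|^{2\alpha}u$ encoded in \eqref{F-F_polynomial} together with uniform $L^{2(\alpha+1)}(\Omega_T\times\RR^3)$ bounds.
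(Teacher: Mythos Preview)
Your overall framework is exactly the paper's: invoke the Budhiraja--Dupuis theorem by checking the compactness condition (your \textbf{(LDP1)}, the paper's Proposition~\ref{compact}) and the weak-convergence condition (your \textbf{(LDP2)}, the paper's Proposition~\ref{weakconv}). Your treatment of \textbf{(LDP1)} is also essentially the same as the paper's.

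The substantive divergence is in \textbf{(LDP2)}. You propose to prove tightness of $\{u^\e_{\phi^\e}\}$ in $Y$ and then identify the limit via the monotonicity/discounting trick. The paper does \emph{not} argue via tightness at all; instead it estimates the difference $U_\e = u^\e_{\phi^\e} - u^0_\phi$ directly. After Skorokhod representation (so $\phi_\e\to\phi$ a.s.\ weakly in $S_M$), one applies It\^o's formula to $|U_\e|^2_{L^2}$, localizes on the good sets $G_{N,\e}(T)$, and uses \eqref{upper_F-F_t} plus Gronwall to reduce everything to controlling $\sup_t|\int_0^t(\s(s,u^0_\phi)[\phi_\e-\phi],U_\e)\,ds|$. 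This term is handled not by compactness in space but by a \emph{time-discretization}: replace $U_\e(s)$ and $\s(s,u^0_\phi(s))$ by their values at dyadic grid points $\bar s_n$, use a small-time-increment lemma (Lemma~\ref{timeincrement}, proved under Condition {\bf (C')(ii)}) to control the replacement error uniformly in $\e$, and on each dyadic interval use that $\s(t_k,u^0_\phi(t_k))$ is a compact operator from $H_0$ to $H$ so that weak convergence of $\int_{t_{k-1}}^{t_k}(\phi_\e-\phi)$ in $H_0$ becomes strong convergence after applying $\s$. The stochastic integral is dispatched by BDG with the $\sqrt{\e}$ factor. This yields $\|U_\e\|_Y\to 0$ in probability directly.

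Your tightness route is not obviously wrong, but it is where the real difficulty lies, and the paper explicitly says in the introduction that tightness-based (``martingale'') arguments do not seem appropriate on $\RR^3$ because the embeddings are not compact. Your proposed fix --- horizontal localization plus $e^{-\delta A}$-regularization --- would have to produce tightness in $C([0,T];H)$ for processes on all of $\RR^3$, and it is unclear how the cut-off interacts with the divergence-free constraint and the anisotropic smoothing (there is no vertical regularization). The paper's direct-difference approach sidesteps this completely; the only ``compactness'' used is that each single operator $\s(t_k,u^0_\phi(t_k)):H_0\to H$ is compact (because $Q^{1/2}$ is Hilbert--Schmidt), which is much weaker than compactness of a family of laws in $Y$. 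In short: your plan is correct at the level of the abstract criterion, but for \textbf{(LDP2)} the paper's direct estimate via time-discretization and Lemma~\ref{timeincrement} is both simpler and avoids the compactness obstacle you identified; your tightness argument would need substantially more work to be made rigorous on $\RR^3$, if it can be at all.
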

 
The proof relies on properties of a stochastic control equation. Let $M>0$, $\phi\in {\mathcal A}_M$ and $u_0\in L^4(\Omega; \tilde{H}^{0,1})$.
 Suppose that $\s$ satisfies condition  {\bf (C')(i)}   
 and consider the following non linear
SPDE with initial condition $u_\phi(0)=u_0$:
\begin{align}   \label{u_phi}
d_t u_\phi(t) + \big[ - \nu A_h \Delta_h u_\phi(t) + & B\big(u_\phi (t) \big) + a |u_\phi(t)|^{2\alpha} u_\phi(t)\big] dt  \nonumber \\
& = \s\big(t,u_\phi(t)\big)  dW(t)  + \s\big(t,u_\phi(t) \big) \phi(t)  dt.
\end{align}
The following theorem shows that  Theorem \ref{th_wp} holds in this setting. Its proof, which is similar to that
of Theorem \ref{th_wp} (see also Theorem 2.4 in \cite{CM}), is given in the appendix.
Note that the result would still be valid with "small enough" 
$K_2$, $\tilde{K}_2$ and $L_2$. However, some further arguments needed to prove the Large Deviations Principle require  these coefficients
to vanish.

\begin{theorem}\label{th4.1}
Let $\s$ satisfy condition   {\bf (C')(i)}
 Then for every
 $M>0$ and $T>0$  
 and any ${\mathcal F}_0$-measurable  $u_0$ such
 that   $\EX \|u_0\|_{0,1}^4 < \infty$ and any 
 $\phi \in   \mathcal{A}_M$, 
there exists a   unique weak solution $u_\phi$  in ${\mathcal X} $ of the
equation  \eqref{u_phi} with
initial data  $u_\phi(0)=u_0 \in L^4(\Omega; \tilde{H}^{0,1})$. Furthermore,   $u_\phi \in C(0,T;H)$ a.s.
 and  there exists a constant
 $C:=C(K_0, K_1, \tilde{K}_0, \tilde{K}_1, T,M)$
 such that
for $\phi\in {\mathcal A}_M$,
\begin{equation} \label{apriori_phi}
 \EX\Big( \sup_{0\leq t\leq T}
 \|u_\phi(t)\|_{0,1}^4
+ \Big(\int_0^T \!\! \|u_\phi(t)\|_{1,1}^2\, dt\Big)^2 +\int_0^T\!\! \|u_\phi(t)\|_{L^{2\alpha +2}}^{2\alpha +2}  dt \Big) \leq C\, \big( 1+E\| u_0\|_{0,1}^4\big).
\end{equation}
  \end{theorem}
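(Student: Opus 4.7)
The strategy is to repeat the four-step program of Theorem \ref{th_wp} (Galerkin truncation, a priori estimates, weak limits with identification, and pathwise uniqueness plus continuity), making only the modifications forced by the extra drift $\sigma(s,u_\phi(s))\phi(s)\,ds$. Define the Galerkin scheme
\[
d(u_\phi^n(t),v) = \langle F(u_\phi^n(t)), v\rangle\, dt + \big(P_n\sigma(t,u_\phi^n(t))\Pi_n\, dW(t), v\big) + \big(P_n\sigma(t,u_\phi^n(t))\phi(t), v\big)\, dt
\]
for $v\in\mathcal H_n$, with $u_\phi^n(0)=P_n u_0$. Because $\phi\in\mathcal A_M$ is a predictable process into $H_0$, the control drift is, as a function of $u_\phi^n$, locally Lipschitz with the same kind of bounds as $\sigma(\cdot,\cdot)$ times $|\phi(s)|_0$, so local existence and uniqueness up to an explosion time $\tau_n^\ast$ follow from classical SDE theory as in the proof of Proposition \ref{Galerkin_n}.

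For the a priori estimates, I apply It\^o's formula to $\|u_\phi^n(t)\|_{0,1}^2$ and to $\|u_\phi^n(t)\|_{0,1}^4$ exactly as in Proposition \ref{Galerkin_n}; the only new term in the first identity is
\[
2\int_0^{t\wedge\tau_N}\!\big(\sigma(s,u_\phi^n(s))\phi(s), u_\phi^n(s)\big)_{0,1}\, ds,
\]
which by Cauchy--Schwarz, \eqref{growth_LQ} (with $\tilde K_2=0$), and Young's inequality is dominated by
\[
\int_0^{t\wedge\tau_N}\!|\phi(s)|_0\bigl[\tilde K_0+\tilde K_1\|u_\phi^n(s)\|_{0,1}^2\bigr]^{1/2}\|u_\phi^n(s)\|_{0,1}\, ds \leq C_M + C\!\!\int_0^{t\wedge\tau_N}\!(1+|\phi(s)|_0^2)\|u_\phi^n(s)\|_{0,1}^2\, ds.
\]
Since $s\mapsto 1+|\phi(s)|_0^2$ is integrable on $[0,T]$ with $L^1$-norm bounded by $T+M$, Gronwall's lemma still applies. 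The same argument with the square of the norm, combined with the stochastic Burkholder--Davis--Gundy estimate from the proof of Proposition \ref{Galerkin_n}, yields the analog of \eqref{apriori}, namely \eqref{apriori_phi}, with the constant depending on $M$ through the extra factor $\exp(C(T+M))$. Crucially, the absence of $\nabla_h u$ in the growth bound (condition {\bf (C')(i)}) means that the control drift contributes no term needing to be absorbed into the viscous dissipation $\nu|\nabla_h u_\phi^n|_{L^2}^2 + \nu|\partial_3\nabla_h u_\phi^n|_{L^2}^2$, so we do not have to take $M$ small.

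Steps 1--4 of Theorem \ref{th_wp} then transfer almost verbatim. Weak/weak-$*$ compactness along a subsequence yields a limit $u_\phi\in\mathcal X$ with weak limits $\tilde F$ and $\tilde S$ of $F(u_\phi^n)$ and $P_n\sigma(\cdot,u_\phi^n)\Pi_n$; the control drift $P_n\sigma(\cdot,u_\phi^n)\phi$ passes to the limit weakly in $L^2(\Omega_T; \tilde H^{0,1})$ against the same test functions $g_j(s)=f(s)e_j$ used in Step 2, using the a priori bound and the fact that $|\phi|_0\in L^2(0,T)$ a.s.\ plays the role of a deterministic weight. For the identification Step 3 I introduce the discounting factor $r(t)=\int_0^t[C_\eta\|v(s)\|_{1,1}^2+L_1+|\phi(s)|_0^2]\,ds$ (augmented by $|\phi|_0^2$), apply It\^o to $e^{-r(t)}|\phi(t)|_{L^2}^2$ for $\phi=u_\phi^n$ and $\phi=u_\phi$, and in the monotonicity estimate for
\[
2\langle F(u_\phi^n)-F(v),u_\phi^n-v\rangle + 2\bigl(\sigma(s,u_\phi^n)-\sigma(s,v))\phi(s),u_\phi^n-v\bigr)_{0,1} + |\sigma(s,u_\phi^n)-\sigma(s,v)|_{\mathcal L}^2
\]
the extra cross term is absorbed using Cauchy--Schwarz and the Lipschitz condition {\bf (C)(ii)} with $L_2=0$ into the discounting factor via $|\phi(s)|_0^2|u_\phi^n-v|_{L^2}^2$; this is the role of the extra $|\phi|_0^2$ in $r$. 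The rest of the argument (identifying $\tilde S=\sigma(\cdot,u_\phi)$ by taking $v=u_\phi$, then varying $v=u_\phi+\lambda\tilde v$ to identify $\tilde F=F(u_\phi)$) is unchanged.

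Pathwise uniqueness in Step 5 and continuity in Step 6 follow the same regularization argument, the only new input being the inequality
\[
2\bigl|(\sigma(s,u)-\sigma(s,v))\phi(s), u-v\bigr)_{0,1}\bigr|\leq L_1 |u-v|_{L^2}^2 + L_1|\phi(s)|_0^2|u-v|_{L^2}^2,
\]
which is again integrable in $s$ and absorbed by Gronwall. The main obstacle I anticipate is bookkeeping of constants to ensure that the stochastic dissipation is not consumed by the control drift; this is precisely why the hypothesis $K_2=\tilde K_2=L_2=0$ in {\bf (C')(i)} is needed, and why the constants in \eqref{apriori_phi} can be allowed to depend on $M$ (so that no smallness of $M$ is required). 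The detailed estimates are mechanical given Proposition \ref{Galerkin_n} and Theorem \ref{th_wp}.
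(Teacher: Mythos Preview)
Your proposal is correct and follows essentially the same four-step program as the paper's own proof in the Appendix. Two technical points deserve mention. First, when you write ``Gronwall's lemma still applies'' for the $\|\cdot\|_{0,1}^2$ and $\|\cdot\|_{0,1}^4$ estimates, note that the coefficient now contains the \emph{random} weight $|\phi(s)|_0$; the paper handles this by invoking a generalized Gronwall lemma (Lemma~\ref{lemGronwall}) in which the integrand $\varphi$ is only assumed to satisfy $\int_0^T\varphi\,ds\le C$ a.s.\ and the martingale remainder is controlled in expectation via BDG---your a.s.\ bound $\int_0^T(1+|\phi|_0^2)\,ds\le T+M$ is exactly the hypothesis needed, so the gap is notational rather than substantive. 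Second, for the a.s.\ continuity $u_\phi\in C([0,T];H)$ the paper takes a shortcut via Girsanov: under the equivalent measure $\tilde{\mathbb P}$ with density $\exp(-\int_0^\cdot\phi\,dW-\tfrac12\int_0^\cdot|\phi|_0^2\,ds)$, the process $u_\phi$ solves the uncontrolled equation \eqref{saNS} driven by $W^\phi=W+\int_0^\cdot\phi\,ds$, so continuity follows directly from Theorem~\ref{th_wp}; your regularization route via $e^{-\delta A}$ also works once you observe that $e^{-\delta A}\int_0^\cdot\sigma(s,u_\phi)\phi(s)\,ds\in C([0,T];H)$, but the Girsanov argument is shorter.
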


We next consider stochastic control evolution equations deduced from \eqref{u_e} by a random shift by a function $\phi \in {\mathcal A}_M$, that
is the solution $u^\epsilon_\phi$ to the evolution equation:
\begin{align}  \label{u_e_phi}
u^\epsilon_\phi(t) = & \, u_0 + \int_0^t \big[ \nu A_h u^\epsilon_\phi(s) - B(u^\epsilon_\phi(s)) - a|u^\epsilon_\phi(s)|^{2\alpha} u^\epsilon_\phi(s) 
+ \s(s,u^\epsilon_\phi(s)) \phi(s) \big] ds  \nonumber \\
& + \sqrt{\epsilon} \int_0^t \s(s,u^\epsilon_\phi(s)) dW(s). 
\end{align} 
Let $\e_0>0$,   $(\phi_\e , 0 < \e \leq \e_0)$  be a family of random elements
taking values in the set ${\mathcal A}_M$ given by \eqref{AM}.
Let  $u^\e_{\phi_\e}$, be
the solution of the corresponding stochastic control equation
 with initial condition $u^\e_{\phi_\e}(0)=u_0 \in \tilde{H}^{0,1}$:
\begin{align} \label{scontrol}
d_t u^\e_{\phi_\e}(t)  + [-\nu A_h u^\e_{h_\e} (t) + B(u^\e_{\phi_\e}(t)) & + a |u^\e_{\phi_\e}(t)|^{2\alpha} 
u^\e_{\phi_\e}(t) ]dt \nonumber \\
& 
=\s(t,u^\e_{\phi_\e}(t)) \big[  \phi_\e(t) dt+\sqrt{\e} \;  dW(t) \big] .
\end{align}
Note that for $W^\e_.=  W_. + \frac{1}{\sqrt \e}
 \int_0^. \phi_\e(s)ds$ we have   $u^\epsilon_{\phi_\e}={\mathcal G}^\e\big(\sqrt{\e} W^\e
 \big)$.
 
The following proposition establishes the weak convergence of the family $(u_{\phi_\e})$ as
$\e\to 0$. Its proof, which is similar to that of Proposition 4.3 in
\cite{DM} (see also Proposition 3.4 in \cite{CM}), is given in the appendix.  

\begin{prop}  \label{weakconv}
Suppose that  condition {\bf (C')} is satisfied.   
Let $u_0 $ be
${\mathcal F}_0$-measurable such that $E\|u_0\|_{0,1}^4<+\infty$, and let
$\phi_\e$ converge to $\phi$ in distribution as random elements taking
values in ${\mathcal A}_M$, where this set is defined by \eqref{AM}
and endowed with the weak topology of the space $L_2(0,T;H_0)$. Then
as $\e \to 0$, the solution $u^\e_{\phi_\e}$ of \eqref{scontrol} converges
in distribution to the solution $u^0_\phi$ of \eqref{dcontrol} in
$Y=C([0, T]; H) \cap L^2(0,T;\tilde{H}^{1,0})$ endowed with the norm
\eqref{norm_Y}. That is, as $\e \to 0$,
 ${\mathcal G}^\e\Big(\sqrt{\e} \big( W_. + \frac{1}{\sqrt{\e}} \int_0^. \phi_\e(s)ds\big) \Big)$  converges in
distribution to $ {\mathcal G}^0\big(\int_0^. \phi(s)ds\big)$ in $Y$.
\end{prop}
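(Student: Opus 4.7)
\medskip
\noindent\textbf{Proof plan.}
The strategy is the weak convergence approach of Budhiraja--Dupuis. Since $u^0_\phi$ is deterministic once $\phi$ is fixed, it suffices to extract from any subsequence of the laws of $(u^\epsilon_{\phi_\epsilon},\phi_\epsilon)$ a further subsequence converging in distribution in $Y\times {\mathcal A}_M$ to $(u^0_\phi,\phi)$, and then to identify the weak limit with the unique solution of \eqref{dcontrol}. The starting point is Theorem \ref{th4.1}: a minor adaptation of its proof, tracking the small parameter $\sqrt{\epsilon}$ in front of the stochastic integral, shows that the apriori bound \eqref{apriori_phi} holds uniformly in $\epsilon\in (0,\epsilon_0]$ for $u^\epsilon_{\phi_\epsilon}$.

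Next I would establish tightness of the laws of $(u^\epsilon_{\phi_\epsilon},\sqrt{\epsilon}W,\phi_\epsilon)$ in a suitable product space, where the first factor lives in the weak topology of $L^2(0,T;\tilde{H}^{1,1})$ intersected with $C([0,T];V')$, with $V'$ a negative-index Sobolev space into which the drift and diffusion ranges embed. The time equicontinuity in $V'$ is deduced by estimating each term of the mild form of \eqref{scontrol} using \eqref{majF_v}, the H\"older regularity in {\bf (C')(ii)}, and the uniform bound $\int_0^T |\phi_\epsilon(s)|_0^2\, ds\leq M$. Since the spatial domain is unbounded, local compactness on balls of $\RR^3$ (via an Aubin--Lions argument combining $L^2_tH^{1,1}$ with $C_tV'$) has to be coupled with equi-integrability of the spatial tails provided by the apriori estimate in $L^\infty(0,T;\tilde{H}^{0,1})$. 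Invoking the Jakubowski--Skorokhod representation, I obtain on a new probability space $(\tilde\Omega,\tilde{\mathcal F},\tilde\PP)$ a.s.\ convergent copies $(\tilde u^\epsilon,\tilde W^\epsilon,\tilde \phi_\epsilon)\to (\tilde u,\tilde W,\tilde \phi)$; note that $\tilde \phi_\epsilon\to\tilde\phi$ only in the weak topology of $S_M$.

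The third step is to pass to the limit in \eqref{scontrol}. The stochastic integral tends to zero in $L^2(\tilde\Omega;C([0,T];H))$ by the Burkholder--Davis--Gundy inequality combined with the prefactor $\sqrt{\epsilon}$ and the growth bound \eqref{growth_tilde_LQ}. The linear, bilinear and polynomial drift terms converge by strong convergence in $L^2_tL^2_{\mathrm{loc}}$ combined with the uniform bound in $L^{2\alpha+2}(\Omega_T\times\RR^3)$, tails being handled through the apriori estimates. The critical point is the controlled drift $\sigma(s,\tilde u^\epsilon)\tilde\phi_\epsilon$: writing
\[
\sigma(s,\tilde u^\epsilon)\tilde\phi_\epsilon-\sigma(s,\tilde u)\tilde\phi
=[\sigma(s,\tilde u^\epsilon)-\sigma(s,\tilde u)]\tilde\phi_\epsilon+\sigma(s,\tilde u)[\tilde\phi_\epsilon-\tilde\phi],
\]
the first piece tends to zero by the Lipschitz condition {\bf (C')(i)} with $L_2=0$ together with the Cauchy--Schwarz bound $\int_0^T|\tilde\phi_\epsilon|_0\, ds\leq \sqrt{MT}$, while the second tends to zero weakly because $\sigma(\cdot,\tilde u(\cdot))\in L^2(0,T;{\mathcal L})$ and $\tilde\phi_\epsilon\rightharpoonup \tilde\phi$ in $L^2(0,T;H_0)$.

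Uniqueness for the deterministic controlled equation, which follows from the monotonicity estimate of Lemma \ref{F-F} and a Gronwall argument with an $r(t)$-exponential weight as in the uniqueness part of Theorem \ref{th_wp} (using that $L_2=0$), then forces $\tilde u= u^0_{\tilde\phi}$; since $\tilde\phi\stackrel{d}{=}\phi$, the law of $\tilde u$ coincides with that of $u^0_\phi$, and uniqueness of the accumulation point yields the claimed convergence in distribution in $Y$. The main obstacle is the combination of the unbounded spatial domain with the need to pass to the limit in the two nonlinear terms $B(\cdot)$ and $|\cdot|^{2\alpha}\cdot$; this forces one to couple weak/weak-star compactness in $\tilde{H}^{1,1}$ and $L^{2\alpha+2}$ with local strong compactness in space together with tail control from the $L^\infty_t\tilde{H}^{0,1}$ estimate, rather than directly invoking Aubin--Lions on $\RR^3$.
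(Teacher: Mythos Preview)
Your route is genuinely different from the paper's, and it has a real gap at the end that you should be aware of.

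The paper does \emph{not} try to prove tightness of the solutions $u^\e_{\phi_\e}$ at all. Instead it applies Skorokhod's representation only to the controls $\phi_\e$ (since $S_M$ with its weak topology is Polish), so that one may assume $\phi_\e\to\phi$ a.s.\ weakly in $L^2(0,T;H_0)$ on the original probability space. It then works directly with the difference $U_\e=u^\e_{\phi_\e}-u^0_\phi$: It\^o's formula for $|U_\e(t)|_{L^2}^2$ together with the monotonicity estimate of Lemma \ref{F-F} gives a Gronwall-type inequality on ``good'' sets $G_{N,\e}(T)$ where both solutions are bounded in the relevant norms. The only delicate term is $\int_0^t(\sigma(s,u^0_\phi(s))(\phi_\e(s)-\phi(s)),U_\e(s))\,ds$; here the paper inserts a dyadic time grid $\{t_k=kT2^{-n}\}$, splits into five pieces, uses a quantitative time-increment lemma (Lemma \ref{timeincrement}) together with the H\"older regularity {\bf (C')(ii)} to control four of them by $C2^{-n(\gamma\wedge 1/4)}$, and for the remaining piece uses that each $\sigma(t_k,u^0_\phi(t_k))$ is a \emph{compact} operator from $H_0$ to $H$ to turn weak convergence of $\int_{t_{k-1}}^{t_k}(\phi_\e-\phi)\,ds$ into strong convergence in $H$. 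This yields $\EE[1_{G_{N,\e}(T)}\|U_\e\|_Y^2]\to 0$ directly, hence convergence in probability (and in distribution) in the \emph{strong} norm of $Y$. No Aubin--Lions, no tail control, no compactness on $\RR^3$ is needed.

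Your approach, by contrast, is the martingale/tightness route. It can be made to work, but as written it does not deliver the conclusion: the tightness you sketch---weak $L^2(0,T;\tilde H^{1,1})$, strong $C([0,T];V')$, plus local-in-space strong $L^2$ via Aubin--Lions with tail control---is \emph{not} tightness in $Y=C([0,T];H)\cap L^2(0,T;\tilde H^{1,0})$ with the norm \eqref{norm_Y}. After identifying the limit as $u^0_\phi$ by uniqueness, you still only have convergence in the weaker topology you started with; you have not shown $\sup_t|U_\e(t)|_{L^2}\to 0$ or $\int_0^T|\nabla_h U_\e|_{L^2}^2\,dt\to 0$. To close your argument you would need an additional step (for instance, convergence of energies plus weak convergence to get strong convergence), which you have not indicated. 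The paper's direct energy estimate on $U_\e$ sidesteps this issue entirely and is precisely why that route was chosen on the whole space $\RR^3$.
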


The following compactness result
is the second ingredient which allows to transfer the  LDP  from $\sqrt{\e} W$ to $u^\e$.
Its proof is similar to that of Proposition \ref{weakconv}
and easier; it will be sketched in the appendix.  
\begin{prop}  \label{compact}
Suppose that  condition {\bf (C')} holds. 
Fix  $M>0$, $u_0\in \tilde{ H}^{0,1}$ and let
$ K(M)=\{u^0_\phi \in X :  \phi \in S_M \}$,
where $u^0_\phi$ is the unique solution of the deterministic control
equation \eqref{dcontrol}, and let $Y  = C([0, T]; H) \cap L^2(0, T; \tilde{ H}^{1,0})$.
Then $K(M)$ is a compact subset of  $ Y$.
\end{prop}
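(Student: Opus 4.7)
The strategy is to establish sequential compactness of $K(M)$ in $Y$, which suffices since $Y$ is a Polish space. Pick any $\{\phi_n\} \subset S_M$ and set $u_n := u^0_{\phi_n}$. Because $S_M$ is a bounded, closed, convex subset of the Hilbert space $L^2(0,T;H_0)$, it is weakly compact; hence, along a subsequence, $\phi_n \rightharpoonup \phi$ weakly, with $\phi \in S_M$. The goal is to prove $u_n \to u^0_\phi$ strongly in $Y$ along a further subsequence.

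First I would apply the deterministic analogue of Theorem~\ref{th4.1} to \eqref{dcontrol}: the same chain of It\^o-formula and Gronwall arguments, with no stochastic integral and no expectations to worry about, gives the uniform bound
\[
\sup_{t\in[0,T]} \|u_n(t)\|_{0,1}^2 + \int_0^T \|u_n(s)\|_{1,1}^2\, ds + \int_0^T \|u_n(s)\|_{L^{2\alpha+2}}^{2\alpha+2}\, ds \le C\bigl(M, \|u_0\|_{0,1}\bigr).
\]
Extracting further, one obtains a weak/weak-$*$ limit $u$ belonging to $L^\infty(0,T;\tilde H^{0,1}) \cap L^2(0,T;\tilde H^{1,1}) \cap L^{2\alpha+2}((0,T)\times\RR^3)$, together with a weak limit $\chi$ of $|u_n|^{2\alpha}u_n$ in the dual of $L^{2\alpha+2}$.

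To upgrade this to strong convergence in $Y$, I would write the energy identity for $w_n := u_n - u^0_\phi$. Using Lemma~\ref{F-F} with $u=u_n$ and $v=u^0_\phi$, together with the splitting
\[
\sigma(t,u_n)\phi_n - \sigma(t,u^0_\phi)\phi = \sigma(t,u_n)(\phi_n-\phi) + [\sigma(t,u_n)-\sigma(t,u^0_\phi)]\phi,
\]
one obtains, for suitable $\eta \in (0,\nu)$,
\[
\frac{d}{dt}|w_n(t)|_{L^2}^2 + 2\eta |\nabla_h w_n(t)|_{L^2}^2 \le C\bigl(\|u^0_\phi(t)\|_{1,1}^2 + M\bigr) |w_n(t)|_{L^2}^2 + r_n(t),
\]
where $r_n(t) = 2\bigl(\sigma(t,u_n)(\phi_n-\phi), w_n\bigr) + 2\bigl([\sigma(t,u_n)-\sigma(t,u^0_\phi)]\phi, w_n\bigr)$. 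The second summand is dominated, via the Lipschitz condition \textbf{(C$'$)(i)} with $L_2=0$, by an $L^1(0,T)$ modulus times $\sup_s|w_n(s)|_{L^2}^2$ once $u_n \to u^0_\phi$ strongly in $L^2(0,T;H)$; the first summand is handled by the weak convergence $\phi_n \rightharpoonup \phi$ combined with strong convergence of $\sigma(\cdot,u_n)^{*} w_n$ in $L^2(0,T;H_0)$ (again provided by $K_2=0$ in \textbf{(C$'$)(i)}). Gronwall's lemma then yields $w_n\to 0$ in $C([0,T];H)\cap L^2(0,T;\tilde H^{1,0})$, i.e., in $Y$. Identification $\chi = |u|^{2\alpha}u$ follows from a.e.\ convergence along a further subsequence, and passing to the limit in \eqref{dcontrol} confirms $u = u^0_\phi$.

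The main obstacle is establishing strong $L^2(0,T;H)$-convergence of $u_n$ on the unbounded domain $\RR^3$, where Sobolev embeddings fail to be compact. I would handle this by a local Aubin--Lions argument: equation \eqref{dcontrol} combined with Lemmas~\ref{lemma_B_1} and \ref{lemma_F} bounds $\partial_t u_n$ in $L^{4/3}\bigl(0,T;(\tilde H^{1,1}\cap L^{2\alpha+2})^{*}\bigr)$, which, together with the $L^2(0,T;\tilde H^{1,1})$-bound, gives relative compactness of $\{u_n\}$ in $L^2(0,T;L^2(B_R))$ for every ball $B_R\subset \RR^3$. Tightness in the spatial variable is enforced by testing \eqref{dcontrol} against a smooth cutoff $1-\chi_R$, exploiting the uniform $\|u_n\|_{0,1}$-bound together with the decay of $u_0$ at infinity, to show that the $L^2$ mass on $|x|>R$ is uniformly small in $n$. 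A diagonal argument then promotes local to global strong convergence, completing the proof.
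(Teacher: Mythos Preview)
Your setup matches the paper's: extract a weakly convergent subsequence $\phi_n\rightharpoonup\phi$ in $S_M$, write the energy inequality for $U_n=u_n-u^0_\phi$ via Lemma~\ref{F-F}, and try to show the forcing remainder vanishes. The divergence is in how that remainder is handled, and your route has a genuine gap.

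The paper does \emph{not} use any spatial compactness. It splits the forcing as
\[
\sigma(s,u_n)\phi_n-\sigma(s,u^0_\phi)\phi=[\sigma(s,u_n)-\sigma(s,u^0_\phi)]\phi_n+\sigma(s,u^0_\phi)(\phi_n-\phi),
\]
with $\phi_n$ (not $\phi$) paired against the Lipschitz increment, so that the first piece is absorbed into the Gronwall coefficient via $\sqrt{L_1}\,|\phi_n|_0\,|U_n|_{L^2}^2$. For the second piece, where $u^0_\phi$ is \emph{fixed}, the paper uses a time discretization $\bar s_N$, the time-increment Lemma~\ref{timeincrement}, and the H\"older regularity \textbf{(C$'$)(ii)} to reduce matters to finitely many terms of the form $\big(\sigma(t_k,u^0_\phi(t_k))\int_{t_{k-1}}^{t_k}(\phi_n-\phi)\,ds,\,U_n(t_k)\big)$. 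These vanish because $\sigma(t_k,u^0_\phi(t_k)):H_0\to H$ is Hilbert--Schmidt, hence compact, so weak convergence of $\int_{t_{k-1}}^{t_k}(\phi_n-\phi)\,ds$ in $H_0$ becomes strong convergence in $H$. No Aubin--Lions, no spatial tightness.

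Your argument instead requires global strong convergence $u_n\to u^0_\phi$ in $L^2(0,T;H)$ as an input, to be obtained by local Aubin--Lions plus a tail estimate. The tail estimate is the problem: testing \eqref{dcontrol} against $(1-\chi_R)^2 u_n$ produces the forcing term $\big(\sigma(s,u_n)\phi_n,(1-\chi_R)^2 u_n\big)$, and nothing in condition \textbf{(C$'$)} forces $\sigma(s,u_n)\phi_n$ to have uniform spatial decay; $\sigma$ is not assumed to be a Nemytskii operator, so local convergence of $u_n$ does not localise $\sigma(s,u_n)$ either. The bound $|\sigma(s,u_n)\phi_n|_{L^2}\le C|\phi_n|_0$ gives only an $O(1)$ contribution on $\{|x|>R\}$, which defeats the Gronwall argument for the tail. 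Your sketch ``exploiting the uniform $\|u_n\|_{0,1}$-bound together with the decay of $u_0$'' does not address this, since the $\|\cdot\|_{0,1}$ bound controls $\partial_3 u_n$ in $L^2$ but gives no decay in $x$.

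A second symptom: your proof never invokes condition \textbf{(C$'$)(ii)}, yet the paper introduces this H\"older-in-time hypothesis precisely for the compactness and weak-convergence propositions. Its absence in your argument is a sign that the mechanism you rely on is different from, and cannot substitute for, the one the paper actually needs on $\RR^3$.
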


Using the above results, we can complete the proof of the Large Deviations Principle for 
our stochastic Brinkman-Forchheimer 3D Navier-Stokes
equations. 

\noindent {\bf Proof of Theorem \ref{PGDue}:}
 Propositions \ref{compact} and  \ref{weakconv} imply that the family
$\{u^\e\}$ satisfies the Laplace principle,  which is equivalent
to the large deviation principle, in $Y$ with the good  rate function defined by \eqref{ratefc};
 see Theorem 4.4 in \cite{BD00} or
Theorem 5 in \cite{BD07}. This  concludes the proof of Theorem \ref{PGDue}.
\hfill $\Box$

\section{Appendix} \label{Appendix} 
The computations in this section are similar to the ones established for the stochastic equation \eqref{saNS}.
Equation \eqref{dcontrol}  is a particular case of equation \eqref{u_phi} and the proof of  the well posedness
of \eqref{u_phi} follows the steps  used to prove that  of \eqref{saNS}.  
However, for the sake of completeness, we show some of the estimates that  are performed for  
\eqref{u_phi} to show how  the extra term $\s\big(t,u_\phi(t) \big) \phi(t)$  with respect to \eqref{saNS} can be dealt with.

\subsection{A priori estimates for the stochastic control equation}
In this section we will only show how to obtain the estimates given in Theorem \ref{th4.1}. 
The argument is similar to that of Theorem \ref{th_wp} (see also Theorem 2.4 in  \cite{CM}).
We briefly sketch it only pointing out the changes to be made to deal with the random shift $\phi$.  

We at first consider an analog of \eqref{un_H}. For $t\in [0,T]$, $\phi\in {\mathcal A}_M$, $v\in {\mathcal H}_n$ and $u_{n,\phi}(0)=P_n u_0$, let
$u_{n,\phi}$ be defined on ${\mathcal H}_n$ as follows:
\begin{align}  \label{u_nphi}
d\big( u_{n,\phi}(t),v\big) = \langle F(u_{n,\phi}(t)\, , \, v\rangle dt + \big( P_n\s(t, u_{n,\phi}(t)) dW_n(t) ,v\big) 
+ \big(P_n  \s(t,u_{n,\phi}(t)) \Pi_n \phi(t)\, ,v) dt.
\end{align}
We check that an analog of \eqref{apriori} can be obtained for these processes with a constant $C$ which only depends on $M$
(but not on $\phi$ and $n$). We let $\tau_N=\inf\{ t : \|u_{n,\phi}(t)\|_{0,1} \geq N\} \wedge T$.

We apply the It\^o formula to $\| .\|_{0,1}^2$ and the process $u_{n,\phi}$.   This yields an equation similar to \eqref{Ito_2} where $u_n$ is
replaced by $u_{n,\phi}$, and where we add the term $T_4(t)$ in the right hand side, with
 \[ T_4(t) = 2\int_0^{t\wedge \tau_N} \big( \s(s,u_{n,\phi}(s)) \phi \, , \, u_{n,\phi}(s)\big) ds.
\] 
The growth condition \eqref{growth_LQ} with $\tilde{K}_2=0$, the Cauchy-Schwarz inequality, 
and the inequality
$|y|\leq 1+y^2$ imply
\begin{align*}
 |T_4(t)| \leq &\; 2\int_0^{t\wedge \tau_N} \Big[ \sqrt{\tilde{K}_0} + \sqrt{\tilde{K}_1} |u_{n,\phi}(s)|_{L^2} \Big]\, | \phi(s)|_0\, \|u_{n,\phi}(s)\|_{0,1} ds \\
\leq &\; 2\sqrt{\tilde{K}_0\, M\, T}  + 2\Big( \sqrt{\tilde{K}_0} + \sqrt{\tilde{K}_1}\Big) \int_0^{t\wedge \tau_N} |\phi(s)|_0\, \|u_{n,\phi}(s)\|_{0,1}^2 ds. 
\end{align*}
Fix $\epsilon>0$; as in the proof of Proposition \ref{Galerkin_n}, choose $\epsilon_0>0$ small enough to ensure 
$2C\epsilon_0 < 2\nu - \epsilon$, where $C$ is the constant
in the right hand side of \eqref{majF_v}, and then $\epsilon_1>0$ small enough to ensure $\frac{\epsilon_1}{4\epsilon_0} <2 a (2\alpha +1)-\epsilon$. 
Set
\begin{align*}
 X(t)=&\sup_{s\leq t\wedge \tau_N} \|u_{n,\phi}(s)\|_{0,1}^2 + 2a\int_0^{t\wedge \tau_N} \|u_{n,\phi}(s)\|_{L^{2\alpha +2}}^{2\alpha +2} ds,\\
Y(t)=&\int_0^{t\wedge \tau_N} \Big[ \big( |\nabla_h u_{n,\phi}(s)|_{L^2}^2 + |\partial_3 \nabla_h u_{n,\phi}(s)|_{L^2}^2 \big) 
+ \| u_{n,\phi}(s)\|_{L^{2\alpha +2}}^{2\alpha +2}\Big] ds.
\end{align*}
 
 For this choice of constants, we deduce that
 \[ X(t) + \epsilon Y(t) \leq Z + \int_0^t \varphi(r) X(r) dr + I(t),\]
 where $\varphi(r)= \tilde{K}_1 + C_\alpha \epsilon_0^{-1} \epsilon_1^{-\frac{1}{\alpha -1}} + 2 \big(\sqrt{\tilde{K}_1} + \sqrt{\tilde{K}_0}) |\phi(r)|_0$ and 
 \[ Z=\|u_0\|_{0,1}^2 + \tilde{K}_0 T + 2 \sqrt{\tilde{K}_0 T M}, \; I(t)=
 2\sup_{s\in [0,T]} \Big| \int_0^{s\wedge \tau_N} \!\!\big( \s(r,u_{\phi}(r)) dW(r)\, , \, u_{\phi}(r)
 \big)_{0,1} \Big|.\]
The Burkholder-Davies-Gundy inequality, the growth condition \eqref{growth_LQ} with $\tilde{K}_2=0$ and arguments similar 
to those in the proof of Proposition
\ref{Galerkin_n} imply that for $\beta\in (0,1)$, $\gamma=\frac{9}{\tilde{K}_1}$, $\tilde{C}= \frac{9}{\beta} \tilde{K}_0 T$ we have
\[ \EX\big( I(t)\big) \leq \beta \EX\big( X(t)\big) + \gamma \int_0^t \EX\big( X(s)\big) ds + \tilde{C} 
\]
Then $\int_0^T \varphi(s) ds \leq \tilde{K}_1 T +  
C_\alpha \epsilon_0^{-1} \epsilon_1^{-\frac{1}{\alpha -1}} T +  2\Big(\sqrt{\tilde{K}_1} + \sqrt{\tilde{K}_0}\Big) \sqrt{MT}:=C(1)$.

Since $\phi$ is random,  we need an extension of Gronwall's lemma (see \cite{DM}, Lemma
3.9 for the proof of a more general result). 

\begin{lemma} \label{lemGronwall}
Let $X$, $Y$,  $I$ and  $\varphi$ be  non-negative processes and
 $Z$ be a non-negative integrable random variable. Assume that
$I$ is non-decreasing and there exist non-negative constants
$C$, $\kappa, \beta, \gamma$ with the following  properties
\begin{equation} \label{Grw-cond}
\int_0^T \varphi(s)\, ds \leq C\; a.s., \quad 2\beta e^C\le 1, 
\end{equation}
  and  such that
 for $0\leq t\leq T$,
\begin{eqnarray*} 
X(t)+ \kappa Y(t) & \leq & Z + \int_0^t \varphi(r)\, X(r)\,  dr + I(t),\;
\mbox{\rm a.s.},\\
\EX(I(t)) &\leq & \beta\, \EX (X(t)) + \gamma  \int_0^t \EX (X(s))\, ds
+ \tilde{C},
\end{eqnarray*}
where  $\tilde{C}>0$ is a constant.
If $ X \in L^\infty([0,T] \times \Omega)$, then
we have
\begin{equation} \label{Gronwall}
\EX \big[ X(t) + \kappa Y(t)\big]  \leq 2 \,  \exp\big( {C+2 t  \gamma  e^C }\big)\,
 \big( \EX(Z) +\tilde{C} \big), \quad t\in [0,T].
\end{equation}
\end{lemma}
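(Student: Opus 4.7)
The plan is to combine a pathwise Gronwall estimate with a bootstrap on the expectations that uses the smallness condition $2\beta e^C \le 1$ to absorb the term $\beta \EX(X(t))$.

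First, I would treat $\omega$ as fixed and regard $Z+I(t)+\int_0^t \varphi(r)X(r)\,dr$ as the driving term. Since $I$ is non-decreasing and $\int_0^T \varphi\,ds\le C$ a.s., a standard Gronwall argument applied pathwise to $X(t)\le Z+I(t)+\int_0^t\varphi(r)X(r)\,dr$ yields $X(t)\le (Z+I(t))e^{\int_0^t\varphi(s)ds}\le (Z+I(t))e^C$. Plugging this estimate of $X(r)$ back into the integral term, one obtains the sharper pathwise bound
\[
X(t)+\kappa Y(t)\;\le\; e^{C}\big(Z+I(t)\big)\quad\text{a.s.}
\]
(the key calculation being $\int_0^t \varphi(r)e^{\int_r^t\varphi}\,dr=e^{\int_0^t\varphi}-1\le e^C-1$).

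Next, I would take expectations and invoke the hypothesis on $\EX(I(t))$:
\[
\EX\big[X(t)+\kappa Y(t)\big]\le e^{C}\EX(Z)+e^{C}\Big(\beta\EX(X(t))+\gamma\int_0^t\EX(X(s))\,ds+\tilde C\Big).
\]
The $L^\infty$ assumption on $X$ guarantees that $\EX(X(t))$ is finite, so I can subtract $\beta e^C\EX(X(t))$ from both sides. The condition $2\beta e^C\le 1$ gives $1-\beta e^C\ge 1/2$, hence
\[
\tfrac{1}{2}\EX(X(t))\;\le\; e^{C}(\EX(Z)+\tilde C)+\gamma e^{C}\int_0^t \EX(X(s))\,ds.
\]
A standard (deterministic) Gronwall lemma then yields
\[
\EX(X(t))\;\le\; 2e^{C}(\EX(Z)+\tilde C)\exp(2t\gamma e^{C}).
\]

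Finally, I would reinsert this bound into the inequality for $\EX[X(t)+\kappa Y(t)]$, again using $\beta e^C\le 1/2$ to handle the $\beta\EX(X(t))$ term and a direct integration for the $\gamma$-integral; after collecting, the $1+[\exp(2t\gamma e^C)-1]+\exp(2t\gamma e^C)$ structure collapses to $2\exp(2t\gamma e^C)$, giving precisely \eqref{Gronwall}. The main obstacle is really just bookkeeping: ensuring that the pathwise Gronwall step produces the clean prefactor $e^C$ in front of both $Z$ and $I(t)$ so that the subsequent absorption step leaves the constants as stated, and verifying that the $L^\infty$ hypothesis is used only to justify the subtraction of $\beta e^C\EX(X(t))$ (so the result could in principle be strengthened by localization if $X$ were not bounded).
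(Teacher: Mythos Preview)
Your proof is correct and follows the natural route: a pathwise Gronwall bound $X(t)+\kappa Y(t)\le e^{C}(Z+I(t))$, then absorption of $\beta e^{C}\EX(X(t))$ using $2\beta e^{C}\le 1$, a second (deterministic) Gronwall on $t\mapsto \EX(X(t))$, and reinsertion to recover the constant exactly. The paper does not give its own proof of this lemma but refers to \cite{DM}, Lemma~3.9; your argument is precisely the standard proof of that result.
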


Lemma \ref{lemGronwall} implies that for all $t\in [0,T]$ we have
 $ \EX\big( X(t) + \epsilon Y(t)\big) \leq  2 \exp( C(1) + 2t\gamma e^{C(1)})\big[ \EX Z + \tilde{C}]$.

Hence there exists a constant $C$, which only depends on $M,T$ and the constants 
$\tilde{K}_i$, $i=0,1$ in Condition {\bf(C)}, such that for every
$\phi\in {\mathcal A}_M$ 
\begin{equation}   \label{maj2_unphi}
 \EX \Big(\sup_{s\in [0,T]} \|u_{n,\phi}(s\wedge \tau_N)\|_{0,1}^2 
+ \int_0^{\tau_N} \!\! \big( \|u_{n,\phi}(s)\|_{1,1}^2 + \|u_{n,\phi}(s)\|_{L^{2\alpha +2}}^{2\alpha +2} \big) ds \Big)
 \leq C\big( 1+ \EX \|u_0\|_{0,1}^2\big).
 \end{equation} 
 
We then apply once more the It\^o formula to the square of $\|u_{n,\phi}\|_{0,1}^2$. This yields an upper estimate similar to \eqref{Ito_4} with $u_{n,\phi}$
instead of $u_n$,  and where we add $\tilde{T}_5(t)$ in the right hand side, with
\[ \tilde{T}_5(t)=4\int_0^{t\wedge \tau_N} \big( \s(s,u_{n,\phi}(s)) \phi(s)\, , \, u_{n,\phi}(s) \big)_{0,1} \|u_{n,\phi}(s)\|_{0,1}^2 ds.\]

Using the Cauchy-Schwarz inequality and the growth condition \eqref{growth_LQ} with $\tilde{K}_2=0$, we deduce that
\[ |\tilde{T}_5(t)| \leq 4 \int_0^{t\wedge \tau_N} \Big(\sqrt{\tilde{K}_1}+ \sqrt{\tilde{K}_0}\Big)  \|u_{n,\phi}(s)\|_{0,1}^4 |\phi(s)|_0 ds 
+ 4 \sqrt{\tilde{K}_0TM}.\]
Let 
\[ \bar{X}(t)=\sup_{s\in [0,t]} \|u_{n,\phi}(s\wedge \tau_N)\|_{0,1}^4, \; \bar{Y}(t)=\int_0^{t\wedge \tau_N} \!\!\|u_{n,\phi}(s)\|_{0,1}^2
\big( |\nabla_h u_{n,\phi}(s)|_{L^2}^2 + \partial_3 \nabla_h u_{n,\phi}(s)|_{L^2}^2 \big) ds.\]
Then choosing again $\epsilon_0$ and $\epsilon_1$ small enough, we deduce that for some $\epsilon >0$, 
\[ \bar{X}(t) + \epsilon \bar{Y}(t) \leq \bar{Z}+ \bar{I}(t) + \int_0^t \bar{\varphi}(s) \bar{X}(s) ds,\]
where $\bar{\varphi}(s)=6\tilde{K}_1 +4\big(\sqrt{\tilde{K}_0}+\sqrt{\tilde{K}_1}) |\phi(s) |_0$, $I(t)=\sup_{s\in [0,t]} \tilde{T}_2(s)$ for $\tilde{T}_2(s)$
 defined in \eqref{Ito_4}  and 
$ \bar{Z}=\sqrt{4\tilde{K}_0TM} + 6\tilde{K}_0 \int_0^{\tau_N} \|u_{n,\phi}(s)\|_{0,1}^2 ds $. 
Then $ \int_0^T \bar{\varphi}(s) ds \leq C(2):= 6 \tilde{K}_1 T + 4\big(\sqrt{\tilde{K}_0} + \sqrt{\tilde{K}_1}) \sqrt{TM}$. 
For $\bar{\beta}\in (0,1)$ and $\bar{\gamma}
= \frac{36}{\bar{\beta}} \tilde{K}_1$, we have $\EX \bar{I}(t) \leq \bar{\beta} \EX \bar{X}(t) + \bar{\gamma} \int_0^t \EX \bar{X}(s) ds + \bar{C}'$ where 
$\bar{C}'=\frac{36}{\beta} \EX\int_0^{\tau_N} \|u_{n,\phi}(s)\|_{0,1}^2 ds<\infty$ by \eqref{maj2_unphi}. 
Using once more Lemma \ref{lemGronwall} we deduce the existence of a constant $C$ depending  on $M$, $T$ and 
the constants $\tilde{K}_i$ in \eqref{growth_LQ}
such that
\begin{equation}  \label{maj4_unphi}
\EX \Big[ \sup_{t\in [0,T]} \|u_{n,\phi}(s\wedge \tau_N)\|_{0,1}^4 + \int_0^{\tau_N} \|u_{n,\phi}(s)|_{0,1}^2 \|u_{n,\phi}(s)_{1,1}^2 ds\Big]
\leq C\big( 1+\EX \|u_0\|_{0,1}^4\big). 
\end{equation}
holds for any $\phi\in {\mathcal A}_M$. 

This estimate being established, we follow the steps in the proof of Theorem \ref{th_wp} and prove that the weak limit $u_\phi$ of a proper subsequence
of the sequence $(u_{n,\phi}, n\geq 1)$ is a solution   of the evolution equation \eqref{u_phi}. 
 In order to   conclude the proof of  Theorem \ref{th4.1}, it remains only to prove the almost sure continuity of the process  $u_{\phi}$.

Let $W^\phi(t) = W(t) +\int_0^t \phi(s) ds$; the Girsanov theorem implies that $W^\phi$ is a Brownian motion 
under the probability  $\tilde{\mathbb P}$ with density
$\exp\big(-\int_0^t \phi(s) dW(s) - \frac{1}{2} \int_0^t |\phi(s)|_0^2 ds\big)$ with respect to ${\mathbb P}$ on ${\mathcal F}_t$.
 Under $\tilde{\mathbb P}$ the process $u_\phi$ is the unique solution
to the evolution equation \eqref{saNS} in ${\mathcal X}$ and belongs $\tilde{\mathbb P}$ a.s. to $C([0,T]:H)$. Since the probabilities $\tilde{\mathbb P}$ and 
${\mathbb P}$  are equivalent and this completes the proof of  Theorem \ref{th4.1}.

\subsection{Weak convergence of the stochastic control equations (Proposition \ref{weakconv}). }   \label{WC}
We at first prove   the following
technical lemma, which studies time increments of the solution to
the stochastic control problem  \eqref{u_e_phi}. To state the lemma mentioned above,  we need the following notations.
For every integer $n$, let $\psi_n : [0,T]\to [0,T]$ denote a measurable map
such that for every $s\in [0,T]$,
$s\leq \psi_n(s) \leq \big(s+c2^{-n})\wedge T$ for some positive constant $c$.
 Given $N>0$,  $\phi\in {\mathcal A}_M$,
 and for  $t\in [0,T]$,  let
\[ G_N(t)=\Big\{ \omega \, :\, \Big (\sup_{0\leq s\leq t}  |u_\phi^\e(s)(\omega)|_{L^2}^2 \Big)\vee
\Big(  \int_0^t \|u_\phi^\e(s)(\omega)\|_{1,1}^2 ds \Big) \vee \Big(\int_0^t \|u^\e_\phi(s)\|_{L^{2\alpha +2}}^{2\alpha +2} ds \Big) \leq N\Big\}.\]
\begin{lemma} \label{timeincrement}
Let $\e_0, M,N>0$, $\sigma$
satisfy condition {\bf (C')(i)}.  
Let  $u_0\in L^4(\Om;\tilde{H}^{0,1})$ be ${\mathcal F}_0$-measurable,
and let $u_\phi^\e(t)$
be solution  of \eqref{u_e_phi}.
 Then there exists a positive  constant
$C$ (depending on
$K_i,  \tilde{K}_i,i=0,1,  {L}_1, T, M, N, \e_0$)
 such that for
 any  $\phi\in {\mathcal A}_M$, $\e\in [0, \e_0]$:
\begin{align}  \label{time}
I_n(\phi,\e):=& \EX\Big[ 1_{G_N(T)}\;  \int_0^T  \Big\{ |u_{\phi}^\e(s)-u_{\phi}^\e(\psi_n(s))|_{L^2}^2  \nonumber  \\
& \qquad + \int_s^{\psi_n(s)} \big( |\nabla_h u^\e_\phi(r)|_{L^2}^2 dr
+  \| u^\e_\phi(r)\|_{L^{2\alpha+2}}^{2\alpha +2} \big) dr \Big\}\, ds\Big]
\leq C\, 2^{-\frac{n}{2}}.
\end{align}
\end{lemma}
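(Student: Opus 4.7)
The plan is to split $I_n(\phi,\epsilon) = I_n^{(1)} + I_n^{(2)}$, where $I_n^{(1)}$ collects the $|u^\epsilon_\phi(s) - u^\epsilon_\phi(\psi_n(s))|^2_{L^2}$ contribution and $I_n^{(2)}$ the iterated integral $\int_s^{\psi_n(s)}(\cdots)\,dr$ part. With $\delta_n := c\,2^{-n}$, the piece $I_n^{(2)}$ is immediate from Fubini: applied to $f(r) = |\nabla_h u^\epsilon_\phi(r)|^2_{L^2} + \|u^\epsilon_\phi(r)\|^{2\alpha+2}_{L^{2\alpha+2}}$, the inequality $\int_0^T\int_s^{\psi_n(s)} f(r)\,dr\,ds \leq \delta_n \int_0^T f(r)\,dr$ combined with the defining bounds of $G_N(T)$ gives $I_n^{(2)} \leq 2cN\,2^{-n}$.

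For $I_n^{(1)}$, I introduce the stopping time $\tau_N := \inf\{t\in[0,T]\colon \text{any of the three quantities defining } G_N(t) \text{ exceeds } N\}\wedge T$, so that $G_N(T) \subset \{\tau_N = T\}$. Then $1_{G_N(T)}|u^\epsilon_\phi(\psi_n(s)) - u^\epsilon_\phi(s)|^2_{L^2} \leq 1_{\{s\leq \tau_N\}}|u^\epsilon_\phi(\psi_n(s)\wedge\tau_N) - u^\epsilon_\phi(s)|^2_{L^2}$. For each fixed $s$, I apply the infinite-dimensional (Krylov--Rozovskii) It\^o formula to $t\mapsto |u^\epsilon_\phi(t\wedge\tau_N) - u^\epsilon_\phi(s)|^2_{L^2}$ on $[s,\psi_n(s)]$ to obtain an expression for this squared norm as the sum of three deterministic integrals of length at most $\delta_n$ (coming from the drift $F(u)$, the control shift $\sigma\phi$, and the quadratic variation $\epsilon|\sigma|^2_{\mathcal L}$) plus a zero-mean stochastic integral; taking expectation removes the latter.

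Using $\langle B(v), v\rangle = 0$, the drift contribution $\langle F(u(r)), u(r)-u(s)\rangle$ decomposes into a non-positive ``diagonal'' piece $-\nu|\nabla_h u(r)|^2 - a\|u(r)\|^{2\alpha+2}_{L^{2\alpha+2}}$ plus the frozen cross term $-\langle F(u(r)), u(s)\rangle$, whose three constituents are controlled respectively by $\nu|\nabla_h u(s)||\nabla_h u(r)|$, by the anisotropic bilinear estimate \eqref{maj_B} in the form $|\langle u(s), B(u(r))\rangle| \leq C\|u(s)\|_{1,1}|\nabla_h u(r)||u(r)|$, and by the H\"older bound $|a\langle u(s), |u(r)|^{2\alpha}u(r)\rangle| \leq a|u(s)|_{L^{2\alpha+2}}\|u(r)\|^{2\alpha+1}_{L^{2\alpha+2}}$. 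Cauchy--Schwarz in $r\in[s,\psi_n(s)]$ and then in $s\in[0,T]$, combined with $\sup|u|^2\leq N$, $\int_0^T \|u\|^2_{1,1}\,dr\leq N$ and $\int_0^T \|u\|^{2\alpha+2}_{L^{2\alpha+2}}\,dr\leq N$ on $G_N$, show each such cross contribution is $O(\delta_n)$. The quadratic-variation term is $O(\epsilon_0 \delta_n)$ via \eqref{growth_LQ} (recall $\tilde K_2 = 0$).

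The dominant contribution is the control shift $\sigma\phi$: on $\{r\leq \tau_N\}$ one has $|u(r)-u(s)|_{L^2}\leq 2\sqrt N$ and $|\sigma(r,u(r))|^2_{\mathcal L}\leq K_0+K_1 N$, hence
\[ \int_s^{\psi_n(s)\wedge \tau_N}\!\!\big|\langle u(r)-u(s), \sigma(r,u(r))\phi(r)\rangle\big|\,dr \leq 2\sqrt N(K_0+K_1 N)^{1/2}\sqrt{\delta_n}\Big(\!\int_s^{\psi_n(s)}\!\!|\phi(r)|_0^2\,dr\Big)^{1/2}; \]
integrating over $s\in[0,T]$ and using $\int_0^T|\phi|_0^2\,dr\leq M$ yields an $O(\sqrt{\delta_n}) = O(2^{-n/2})$ contribution. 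Summing all bounds produces $I_n(\phi,\epsilon) \leq C\, 2^{-n/2}$ with $C = C(K_i,\tilde K_i,T,M,N,\epsilon_0)$. The main technical subtlety lies in the frozen cross bilinear term: since $\|u(s)\|_{1,1}$ is not pointwise bounded on $G_N$ but only $L^2_t$-integrable with norm $\leq \sqrt N$, it is essential to use the version of \eqref{maj_B} in which $\|u(s)\|_{1,1}$ enters \emph{linearly}, which matches the available integrability and allows the double Cauchy--Schwarz argument to close with the correct power of $\delta_n$.
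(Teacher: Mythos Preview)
Your proof is correct and follows the same overall strategy as the paper: apply It\^o's formula to $|u^\e_\phi(\psi_n(s))-u^\e_\phi(s)|_{L^2}^2$, estimate each resulting drift/noise contribution using Fubini together with the defining bounds of $G_N(T)$, and use the anisotropic bilinear inequality \eqref{maj_B} for the $B$-term and H\"older for the $|u|^{2\alpha}u$-term. Your explicit Fubini treatment of the piece $I_n^{(2)}$ is exactly right (the paper leaves this implicit).

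There is one genuine methodological difference worth noting. The paper keeps the non-adapted indicator $1_{G_N(T)}$, replaces it by the adapted $1_{G_N(r)}$, and estimates the stochastic integral via the Burkholder--Davis--Gundy inequality; this term is in fact where the paper's rate $2^{-n/2}$ originates, while \emph{all} remaining terms (including the control shift $\sigma\phi$, handled by a direct Fubini swap of the $ds$ and $dr$ integrals) are bounded by $O(2^{-n})$. You instead localize with the stopping time $\tau_N$ so that the stochastic integral is a genuine square-integrable martingale with zero mean and disappears after taking expectation; in your argument the $2^{-n/2}$ rate then comes from your (slightly suboptimal) Cauchy--Schwarz estimate of the control term. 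Both routes are valid; your localization is arguably cleaner for the noise, while the paper's Fubini argument for $\sigma\phi$ is sharper --- combining the two would in fact give $O(2^{-n})$ overall, but the lemma only claims $2^{-n/2}$ and your proof delivers that.
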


\begin{proof} The proof is  close to that of Lemma 3.3 in  \cite{CM}.
Let $\phi\in {\mathcal A}_M$, $\e\geq 0$;
for any $s\in [0,T]$, It\^o's formula yields
 $|u_\phi^\e(\psi_n(s))-u_\phi^\e(s)|_{L^2}^2 = \sum_{i=1}^6 I_{n,i}$, where
\begin{eqnarray*}
I_{n,1}&=&2\, \sqrt{\e}\;  \EX\Big( 1_{G_N(T)} \int_0^T \!\! ds \int_s^{\psi_n(s)}\!  \big(
\sigma(r, u_\phi^\e(r)) dW(r) \, , \, u_\phi^\e(r)-u_\phi^\e(s) \big)\Big) , \\
I_{n,2}&=&{\e}\;  \EX \Big( 1_{G_N(T)} \int_0^T  \!\!ds \int_s^{\psi_n(s)} \!\!
|\sigma(r, u_\phi^\e(r))|_{\mathcal L}^2 \, dr\Big) , \\
I_{n,3}&=&2 \,  \EX \Big( 1_{G_N(T)} \int_0^T \!\! ds \int_s^{\psi_n(s)} \!\! \big(
{\sigma}(r, u_\phi^\e(r)) \, \phi(r)\,  , \, u_\phi^\e(r)-u_\phi^\e(s) \big)\, dr\Big), \\
I_{n,4}&=&2  \nu\,  \EX \Big( 1_{G_N(T)}   \int_0^T  \!\! ds \int_s^{\psi_n(s)} \!\!
 \big\langle \Delta_h \, u_\phi^\e(r)\, , \, u_\phi^\e(r)-u_\phi^\e(s)\big\rangle  \, dr\Big) ,  \\
I_{n,5}&=&-2 \, \EX \Big( 1_{G_N(T)} \int_0^T  \!\! ds \int_s^{\psi_n(s)} \!\!
  \big\langle B( u_\phi^\e(r))\, , \,   u_\phi^\e(r)-u_\phi^\e(s)\big\rangle \, dr\Big) ,  \\
I_{n,6}&=&- 2  a \, \EX \Big( 1_{G_N(T)} \int_0^T \!\!  ds \int_s^{\psi_n(s)} \!\!
  \int_{\RR^3} \!\! |u^\e_\phi(r,x)|^{2\alpha} u^\e_\phi(r,x) \big( u^\e_\phi(r,x)-u^\e_\phi(s,x)\big)\, dx \, dr\Big) .
\end{eqnarray*}
Clearly  $G_N(T)\subset G_N(r)$ for $r\in [0,T]$.
In particular this means that  $|u_\phi^\e(r)|^2_{L^2}+|u_\phi^\e(s)|^2_{L^2} \le N$
 on $G_N(r)$ for $0\leq s\leq r\leq T$.
 We use this observation in the considerations
below.
\par
\noindent The
Burkholder-Davis-Gundy inequality and the growth condition \eqref{growth_tilde_LQ} yield for  $\e \in [0, \e_0]$:
\begin{eqnarray*}
|I_{n,1}|&\leq &
 6\sqrt{\e} \int_0^T ds \; \EX \Big( 
 \int_s^{\psi_n(s)} |\s(r, u_\phi^\e(r))|_{\mathcal L}^2
 1_{G_N(r)}\, | u_\phi^\e(r)- u_\phi^\e(s)|^2 \;  dr \Big)^{\frac{1}{2}} \\ 
&\leq & 6 \sqrt{2 \e_0 N }  \int_0^T ds \; \EX \Big( 
 \int_s^{\psi_n(s)}
[{K}_0+{K}_1\,  |u_\phi^\e(r)|_{L_2}^2  ]\;   \, dr \Big)^{\frac{1}{2}}.
\end{eqnarray*}
Schwarz's inequality and Fubini's theorem as well as \eqref{apriori_phi}, which holds
uniformly in  $\e\in ]0,\e_0]$ for fixed $\e_0>0$ (since the constants ${K}_i$ and ${L}_1$
are multiplied by at most  $\e_0$),  imply
\begin{equation} \label{In1}
|I_{n,1}|\leq 
6 \sqrt{2 \e_0 N T }\, \Big[ \EX \int_0^T\!\! 
\big( {K}_0+{K}_1\, |u_\phi^\e(r)|_{L^2}^2 \big)\, \Big( \int_{(r-c2^{-n})\vee 0}^r ds\Big) \, dr
 \Big]^{\frac{1}{2}} 
\leq C_1 2^{-\frac{n}{2}}
\end{equation}
for some constant $C_1$ depending only on  
${K}_i, i=0,1$, 
 $M$,
 $\e_0$, $N$ and $T$.
The growth condition \eqref{growth_tilde_LQ}  and Fubini's theorem  imply that for $\e\in [0,\e_0]$:
\begin{eqnarray} \label{In2}
|I_{n,2}|&\leq &\e_0  \,   \EX \Big( 1_{G_N(T)}   \int_0^T \!\! ds \int_s^{\psi_n(s)} \!\!
\big({K}_0+{K}_1|u_\phi^\e(r)|_{L^2}^2  \big) dr\Big)  
\leq\;  C_2 2^{-n}
\end{eqnarray}
for some constant $C_2$ depending on the same parameters as $C_1$.  
The Cauchy-Schwarz  inequality, Fubini's theorem,  the growth condition \eqref{growth_tilde_LQ}   and the definition 
of ${\mathcal A}_M$ yield
\begin{align}  \label{In3}
|I_{n,3}|&\leq 2   \;  \EX \Big( 1_{G_N(T)} \int_0^T \!\! ds  
\int_s^{\psi_n(s)} \!\!
 \big({K}_0 +{K}_1|u_\phi^\e(r)|_{L^2}^2 \big)^{\frac{1}{2}}\, |\phi(r)|_0 |\, u_\phi^\e(r)-u_\phi^\e(s)|_{L^2}\, dr\Big)
\nonumber \\
& \leq  4 \sqrt{N}  \; \EX  \int_{0}^T 1_{G_N(T)}
 |\phi(r)|_0 ({K}_0 +{K}_1 N )^{\frac{1}{2}}
 \,
  \Big( \int_{(r-c2^{-n})\vee 0}^r ds \Big)\, dr   \leq \; C_3\,  2^{-n},
\end{align}
for some constant $C_3$ depending on the same parameters as $C_1$.
Using the Cauchy-Schwarz  inequality we deduce that
\begin{eqnarray} \label{In4}
|I_{n,4}|  &= &  2  \Big|  \EX \Big( 1_{G_N(T)} \int_0^T  \!\! \! ds \int_s^{\psi_n(s)} \!\!\!
dr  \big[ - |\nabla_h u_\phi^\e(r)|_{L^2}^2   +  |\nabla_h u_\phi^\e(r)|_{L^2} |\nabla_h u_\phi^\e(s)|_{L^2} \big]\Big) \Big|  \nonumber \\
&\leq &
 \frac{1}{2}\;   \EX \Big( 1_{G_N(T)} \int_0^T ds \; |\nabla_h u^\e_\phi(s)|_{L^2}^2 \,
\int_s^{\psi_n(s)} dr \Big) \leq   C \;  N \;    2^{-n}.
\end{eqnarray}
The antisymmetry relation \eqref{antisymetry}, the inequality \eqref{maj_B}, the Cauchy-Scwarz inequality and Fubini's theorem and inequality yield:
\begin{align} \label{In5}
|I_{n,5}| &\leq 2 \EX \Big( 1_{G_N(T)} \int_0^T ds \int_s^{\psi_n(s)} dr \big| \langle B(u^\e_\phi(r)) , u^\e_\phi(s) \rangle \big| \Big)
\nonumber \\
& \leq CN \EX \Big[ 1_{G_N(T)} \Big( \int_0^T \|u^\e_\phi(s)\|_{1,1}^2 ds \Big)^{\frac{1}{2}} 
\Big( \int_0^T \Big( \int_s^{\psi_n(s)} |\nabla_h u^\e_\phi(r)|_{L_2}
dr \Big) ds \Big)^{\frac{1}{2}} \Big]  \nonumber \\
& \leq C(T) N^{\frac{3}{2}} 2^{-\frac{n}{2}} \EX \Big[ 1_{G_N(T)} 
\Big\{ \int_0^T dr \Big( \int_{(r-c2^{-n})\vee 0}^r ds \Big) |\nabla_h u^\e_\phi(r)|_{L^2}^2 \Big\}^{\frac{1}{2}} \Big]
\leq C_5 2^{-n}
\end{align}
for some constant $C_5$ which depends on $T$ and $N$.

Finally, Fubini's theorem and H\"older's inequality imply:
\begin{align} \label{In6}
& |I_{n,6}| \leq  2 a \, \EX\Big[ 1_{G_N(T)} \int_0^T\!\!ds \int_s^{\psi_n(s)} \!\! dr  \int_{\RR^3} \big( |u^\e_\phi(r)|^{2\alpha+2} 
+ |u^\e_\phi(r)|^{2\alpha +1} |u^\e_\phi(s)|\big) dx \Big]  \nonumber \\
&\quad \leq 2a \, \EX\Big[ 1_{G_N(T)} \int_0^Tds \int_s^{\psi_n(s)}  \|u^\e_\phi(s)\|_{L^{2\alpha+2}}\, \|u^\e_\phi(r)\|_{L^{2\alpha+2}}^{2\alpha +1} dr \Big] 
\nonumber \\
& \qquad + 
2a \, \EX\Big[ 1_{G_N(T)} \int_0^T \|u^\e_\phi(r)\|_{L^{2\alpha +2}}^{2\alpha +2} \Big( \int_{(r-c2^{-n})\vee 0}^r ds\Big) dr \Big] \nonumber \\
&\qquad +  2ac2^{-n} N 
\nonumber \\
&\quad \leq 2a  \big(c2^{-n}\big)^{\frac{2\alpha+1}{2\alpha+2}} \, 
\EX\Big[ 1_{G_N(T)}\Big(  \int_0^T\! \|u^\e_\phi(r)\|_{L^{2\alpha+2}}^{2\alpha+2} \Big)^{\frac{2\alpha+1} {2\alpha+2}}
\Big( \int_0^T \! ds \|u^\e_\phi(s)\|_{L^{2\alpha+2}}^{2\alpha +2} \int_s^{\psi_n(s)}\!\!  dr\Big)^{\frac{1}{2\alpha+2}}  \Big] 
\nonumber \\
&\qquad + 2ac2^{-n} N \; \leq \; C_6 2^{-n}
\end{align}
for some constant $C_6$ depending on $T$ and $N$. 
Collecting the upper estimates from \eqref{In1}-\eqref{In6},
we conclude the proof of \eqref{time}.
\end{proof}

 In the setting of large deviations, we will use  Lemma \ref{timeincrement}
 with the following choice of the function $\psi_n$.
 For any integer $n$  define a step function $s\mapsto \bar{s}_n$
on  $[0,T]$ by the formula
\begin{equation} \label{step-f}
\bar{s}_n= t_{k+1}\equiv(k+1)T 2^{-n}~~\mbox{ for }~~s\in [k T 2^{-n}, (k+1) T 2^{-n}[.
\end{equation}
Then the map $\psi_n(s)=\bar{s}_n$ clearly satisfies the previous requirements with $c=T$.
\par

\noindent \textbf { \textit {Proof of Proposition \ref{weakconv}}} 

Now we return to the setting of  this proposition and recall that  for  random elements $(\phi_\e , 0 < \e \leq \e_0)$ 
taking values in the set ${\mathcal A}_M$, we let $u^\e_{\phi_\e}$ denote the solution to \eqref{scontrol} 
with initial condition $u^\e_{\phi_{\e}}  (0)= u_0\in \tilde{H}^{0,1}$.

 Since ${\mathcal A}_M$ is a Polish space (complete
separable metric space), by the Skorokhod representation theorem, we
can construct processes $(\tilde{\phi}_\e, \tilde{\phi}, \tilde{W}^\e)$
such that the joint distribution of $(\tilde{\phi}_\e,  \tilde{W}^\e)$
is the same as that of $(\phi_\e, W^\e)$,  the distribution of  
$\tilde{\phi}$ coincides with that of $\phi$, and $ \tilde{\phi}_\e  \to
\tilde{\phi}$,  a.s., in the (weak) topology of $S_M$.   Hence a.s. for
every $t\in [0,T]$, $\int_0^t \tilde{\phi}_\e(s) ds - \int_0^t
\tilde{\phi}(s)ds \to 0$ weakly in $H_0$. To lighten notations, we will
write $(\tilde{\phi}_\e, \tilde{\phi}, \tilde{W}^\e)=(\phi_\e,\phi,W)$.
Let $U_\e=u^\e_{\phi_\e}-u^0_\phi$; then  $U_\e(0)=0$ and
\begin{align}\label{difference2}
d U_\e(t)  = & \big[ F(u^\e_{\phi_\e}(t)) - F(u^0_\phi(t))  
+ \s\big(t,u^\e_{\phi_\e}(t)\big) \phi_\e(t) -\s\big(t,u^0_\phi(t)\big) \phi(t)\big] dt  
\nonumber\\
 & +\sqrt{\e} \s\big(t,u^\e_{\phi_\e}(t)\big) dW(t).
\end{align}
Let $\eta \in (0,\nu)$ and $C_\eta$ be defined in \eqref{upper_F-F_t};
 It\^o's formula, the upper estimate \eqref{upper_F-F_t}, the growth condition \eqref{growth_tilde_LQ}
  and the Lipschitz condition {\bf (C')(i)} imply for $t\in [0,T]$:
\begin{align} \label{total-error}
&  |U_\e(t)|_{L^2}^2 +2\eta \int_0^t |\nabla_h U_\e(s)|_{L^2}^2 ds 
+2a\kappa \int_0^t \big| \big( |u^\e_{\phi_\e}(s)| + |u^0_\phi(s)|\big)^\alpha |U_\e (s)| \big|_{L^2}^2 ds 
\nonumber \\
& \quad   \leq 
  \sum_{i=1}^3 T_i(t,\e) + 2   \int_0^t\!\!
    \big( {C}_{\eta} \, \|u^0_\phi(s)\|_{1,1}^2  + \sqrt{L_1}  |\phi_\e(s)|_0\big) |U_\e(s)|_{L^2}^2   ds,
\end{align}
where
\begin{eqnarray*}
T_1(t,\e)&=& 2\sqrt{\e}\int_0^t \big( U_\e(s), \s(s,u^\e_{\phi_\e}(s))\,  dW(s)  \big), \\
T_2(t,\e)&= & \e   \int_0^t ({K}_0+{K}_1|u^\e_{\phi_\e}(s)|_{L^2}^2) ds , \\
T_3(t,\e)&=& 2\int_0^t \Big( \s(s,u^0_\phi(s))\, \big( \phi_{\e}(s)-\phi(s)\big), \, U_\e(s)\Big)\, ds.
\end{eqnarray*}
We want  to show that as $\e \to 0$,  $\|U_\e\|_Y \to 0$ in
probability, which implies
 that $u^\e_{h_\e} \to u_h$  in distribution in $Y$.
 Fix $N>0$ and for $t\in [0,T]$ let
\begin{align*}  G_N(t)=&\Big\{ \sup_{0\leq s\leq t} |u^0_\phi(s)|_{L^2}^2 \leq N \Big\} \cap
 \Big\{ \int_0^t \big( \|u^0_\phi(s)\|_{1,1}^2  + \|u^0_\phi(s)\|_{L^{2\alpha+2}}^{2\alpha+2}\big) ds \leq N
\Big\}, \\
G_{N,\e}(t)=&  G_N(t)\cap \Big\{ \sup_{0\leq s\leq t} \big( |u^\e_{\phi_\e}(s)|_{L^2}^2 \leq N\Big\} \cap
 \Big\{ \int_0^t \big( \|u^\e_{\phi_\e}(s)\|_{1,1}^2  + \|u^\e_{\phi_\e}(s)\|_{L^{2\alpha +2}}^{2\alpha +2} \big) ds \leq N
\Big\}.
\end{align*}
The proof consists in two steps.\\
\textbf{Step 1:}
For any $\e_0 \in ]0,1]$, we have $ {\displaystyle \sup_{0<\e\leq \e_0}\; \sup_{\phi,\phi_\e \in {\mathcal A}_M}
\PX(G_{N,\e}(T)^c )\to 0 \;
\mbox{\rm as }\; N\to \infty.}$
\\
Indeed, for $\e\in ]0,\e_0]$, $\phi,\phi_\e \in {\mathcal A}_M$, the Markov inequality and
the a priori estimate  \eqref{apriori_phi}, which holds uniformly in $\e\in ]0,\e_0]$, imply
\begin{align}\label{G-c}
&  \PX ( G_{N,\e}(T)^c )
\leq   \PX \Big(\sup_{0\leq s\leq T} |u^0_\phi(s)|_{L^2}^2 > N \Big)
+  \PX\  \Big( \int_0^T\! \big(  \|u^0_\phi(s)\|_{1,1}^2 + \| u^0_\phi(s)\|_{L^{2\alpha+2}}^{2\alpha+2}\big) ds  >N\Big)
\nonumber \\
&\quad +\PX \Big(\sup_{0\leq
s\leq T} |u^\e_{\phi_\e}(s)|_{L_2}^2 > N \Big) 
+ \PX\Big(\int_0^T \big( \|u^\e_{\phi_\e}(s)\|_{1,1}^2 + \| u^\e_{\phi_e} (s)\|_{L^{2\alpha+2}}^{2\alpha+2}\big)ds > N \Big)\nonumber \\
&\leq
 {C \, \big(1+ E\|u_0\|_{0,1}^4\big)}{N}^{-1},
\end{align}
for some constant $C$ depending on $T$ and $M$.

\noindent \textbf{Step 2:} Fix $N>0$,
 $\phi, \phi_\e \in {\mathcal A}_M$  such that  as $\e \to 0$, $\phi_\e\to \phi$ a.s. in the weak topology
 of $L^2(0,T;H_0)$; then  one has  as $\e \to 0$:
\begin{equation} \label{cv1}
\EX\Big[ 1_{G_{N,\e}(T)} \Big( \sup_{0\leq t\leq T } |U_\e(t)|_{L^2}^2 + \int_0^T  |\nabla_h U_\e(t)|_{L^2}^2 \, dt\Big)
\Big] \to 0.
\end{equation}
Indeed,  \eqref{total-error}  and Gronwall's lemma imply that on $G_{N,\e}(T)$,
\[ \sup_{0\leq t\leq T} |U_\e(t)|_{L^2}^2 \leq \Big[ \sup_{0\leq t\leq T}
 \big( T_1(t,\e) + T_3(t,\e)\big) +
\e C_*\Big] \, \exp\Big(2{C}_{\eta} N +2\sqrt{L_1 M T}\Big),\]
 where $C_*= T({K}_0+{K}_1N)$.
Using again \eqref{total-error}  we deduce that for some constant
$\tilde{C}=C(T,M,N)$,
 one has for every $\e\in [0,\e_0]$:
\begin{equation} \label{*}
\EX\big( 1_{G_{N,\e}(T)} \, \|U_\e\|_Y^2 \big) \leq \tilde{C} \Big( \e  + \EX \Big[ 1_{G_{N,\e}(T)} \,
\sup_{0\leq t\leq T} \big( T_1(t,\e) + T_3(t,\e)\big) \Big] \Big).
\end{equation}
Since the sets $G_{N,\e}(.)$ decrease, $\EX\big(1_{G_{N,\e}(T)}
 \sup_{0\leq t\leq T} |T_1(t,\e)|\big) \leq
\EX(\lambda_\e)$, where
\[ \lambda_\e := 2\sqrt{\e} \sup_{0\leq t\leq T }
\Big|\int_0^t 1_{G_{N,\e}(s)} \Big( U_\e(s), \s(s,u_{\phi_\e}(s)) dW(s) \Big)\Big|.\]
The scalar-valued random variables $\lambda_\e$ converge to 0 in $L^1$ as $\e\to 0$.
Indeed, by the Burkholder-Davis-Gundy inequality, \eqref{growth_tilde_LQ} and the definition of $G_{N,\e}(s)$,
we have
\begin{align}
\EX (\lambda_\e) & \leq  6\sqrt{\e} \; \EX
\Big\{\int_0^T 1_{G_{N,\e}(s)} \,  |U_\e(s)|_{L^2}^2 \;
 |\s(s, u^\e_{\phi_\e}(s))|^2_{\mathcal L}s
 ds\Big\}^\frac12 \nonumber \\
&\leq  6\sqrt{\e} \; \EX \Big[  \Big\{ 4N \int_0^T 1_{G_{N,\e}(s)}\,
 ({K}_0+ {K}_1| u^\e_{\phi_\e}(s) |_{L^2}^2 )  ds\Big\}^\frac12 \Big]
\leq  C(T,N) \,   \sqrt{\e}. \label{lambda1}
\end{align}
\par
 In further estimates we use Lemma~\ref{timeincrement} with $\psi_n=\bar{s}_n$, where
$\bar{s}_n$ is  defined in \eqref{step-f}.
For any $n,N \geq 1$, if we set $t_k=kT2^{-n}$ for $0\leq k\leq 2^n$,  we obviously have:
\begin{equation}\label{t3}
\EX\Big( 1_{G_{N,\e}(T)}\sup_{0\leq t\leq T} |T_3(t,\e)| \Big)
\leq 2\;  \sum_{i=1}^4  \tilde{T}_i(N,n, \e)+ 2 \; \EX \big( \bar{T}_5(N,n,\e)\big),
\end{equation}
 where
\begin{align*}
\tilde{T}_1&(N,n,\e)= \EX \Big[ 1_{G_{N,\e}(T)} \sup_{0\leq t\leq T}  \Big| \int_0^t  \Big(  \s(s,u^0_\phi(s))
  \big(\phi_\e(s)-\phi(s)\big) \, ,\,
\big[U_\e(s)-U_\e(\bar{s}_n)\big] \Big)  ds\Big| \Big] ,\\
\tilde{T}_2 &(N,n,\e)= \EX\Big[ 1_{G_{N,\e}(T)} \\ & {}\quad
\times\sup_{0\leq t\leq T} \Big| \int_0^t
\Big( [\s(s,u^0_\phi(s)) - \s(\bar{s}_n, u^0_\phi(s))] (\phi_\e(s)-\phi(s))\, ,\, U_\e(\bar{s}_n)\Big)ds \Big|\Big], \\
\tilde{T}_3 &(N,n,\e)= \EX\Big[  1_{G_{N,\e}(T)}
\\ & {}\quad \times
\sup_{0\leq t\leq T} \Big| \int_0^t \Big( \big[ \s(\bar{s}_n,u^0_\phi(s))
- \s(\bar{s}_n, u^0_\phi(\bar{s}_n))\big]
\big(\phi_\e(s) - \phi(s) \big)\, ,\, U_\e(\bar{s}_n)\Big) ds\Big| \Big] ,\\
\tilde{T}_4&(N,n,\e)= \EX \Big[  1_{G_{N,\e}(T)} \sup_{1\leq k\leq 2^n}  \sup_{t_{k-1}\leq t\leq t_k}
\Big| \Big( \sigma(t_k, u^0_\phi(t_k))  \int_{t_{k-1}}^t\!\!  (\phi_\e(s)-\phi(s))\, ds \, ,\, U_\e(t_k)\Big) \Big| \Big], \\
\bar{T}_5&(N,n, \e)=  1_{G_{N,\e}(T)} \sum_{k=1}^{2^n} \Big| \Big( \s(t_k, u^0_\phi(t_k))
\int_{t_{k-1}}^{t_k }  \!\! \big(\phi_\e(s)-\phi(s)\big)\, ds \, ,\,
U_\e(t_k )\Big) \Big| .
\end{align*}
Using the Cauchy-Schwarz inequality, the growth condition \eqref{growth_tilde_LQ} and Lemma~\ref{timeincrement}
with $\psi_n=\bar{s}_n$,
we deduce that  for some constant
$\bar{C}_1:= C(T,M,N)$ and any   $\e \in ]0, \e_0]$:
\begin{align} \label{eqT1}
&  \tilde{T}_1(N,n,\e)\leq
 \EX\Big[ 1_{G_{N,\e}(T)}  \int_0^T
 \big( {K}_0+{K}_1|u^0_\phi(s)|_{L^2}^2\big)^{\frac{1}{2}}
|\phi_\e(s)-\phi(s)|_0\, \big| U_\e(s)-U_\e(\bar{s}_n)\big|_{L^2}\, ds\Big]  \nonumber \\
& \quad   \leq
\Big(  \EX \Big[ 1_{G_{N,\e}(T)}  \int_0^T  \big\{ |u^\e_{\phi_\e}(s) - u^\e_{\phi_\e}(\bar{s}_n)|_{L^2}^2 +
 |u^0_{\phi}(s) - u^0_{\phi}(\bar{s}_n)|_{L^2} ^2 \big\}\, ds\Big] \Big)^{\frac{1}{2}}
\nonumber \\
&\qquad \times
\sqrt{2({K}_0 + {K}_1N)}\;  \Big( \EX  \int_0^T |\phi_\e(s)-\phi(s)|_0^2\, ds \Big)^{\frac{1}{2}}
\leq \bar{C}_1 \;  2^{-\frac{n}{4}}.
\end{align}
A similar computation based on the Lipschitz condition {\bf (C)(ii)}  and Lemma \ref{timeincrement} yields  for
some constant $\bar{C}_3:=C(T,M,N)$ and any   $\e \in ]0, \e_0]$
\begin{align} \label{eqT2}
 \tilde{ T}_3 (N,n,\e) &\leq  \sqrt{2N L_1} \Big( \EX \Big[ 1_{G_{N,\e}(T)}  \int_0^T\!\!
 |u^0_{\phi}(s) - u^0_{\phi}(\bar{s}_n)|_{L^2}^2 \, ds\Big]  \Big)^{\frac{1}{2}} \Big(
 \EX \int_0^T \!\! |\phi_\e(s)-\phi(s)|_0^2  ds \Big)^{\frac{1}{2}}
 \nonumber \\
& \leq \bar{C}_3 \;  2^{-\frac{n}{4}}.
\end{align}
The time H\"older regularity {\bf (C') (ii)}  on $\s(.,u)$ and the Cauchy-Schwarz inequality imply:
\begin{equation}  \label{eqHolder}
\tilde{T}_2(N,n,\e)\leq C \, \sqrt{N} \, 2^{-n\gamma}\, \EX\Big(
1_{G_{N,\e}(T)} \int_0^T\left(1+\| u^0_\phi(s)\|_{1,0}\right)  |\phi_\e(s)-\phi(s)|_0\, ds \Big)
\leq \bar{C}_2  2^{-n\gamma}
\end{equation}
for some constant $\bar{C}_2=C(T,M,N)$.
Using the Cauchy-Schwarz  inequality and the growth condition \eqref{growth_tilde_LQ},  we deduce  for $\bar{C}_4=C(T,N,M)$
and any $\e \in ]0, \e_0]$
\begin{align} \label{eqT3}
\tilde{T}_4(N,n,\e)&\leq  \EX \Big[  1_{G_{N,\e}(T)} \sup_{1\leq k\leq 2^n}
\big({K}_0+{K}_1| u^0_\phi(t_k)|_{L^2}^2
\big)^{\frac{1}{2}} \int_{t_{k-1}}^{t_k}\!\! |\phi_\e(s)-\phi(s)|_0
\, ds \, |U_\e(t_k)|_{L^2} \Big]\nonumber \\
&\leq 2 \sqrt{ N(K_0+K_1N)}\;  \EX\Big( \sup_{1\leq k\leq 2^n}
 \int_{t_{k-1}}^{t_k} |\phi_\e(s)-\phi(s)|_0 \, ds\Big)
\leq 4  \bar{C}_4\;  2^{-\frac{n}{2}}.
\end{align}
Finally, note that the weak convergence of $\phi_\e$ to $\phi$ implies that for any $a,b\in [0,T]$, $a<b$,
as $\e \to 0$ the integral $\int_a^b \phi_\e(s) ds $ converges to $ \int_a^b \phi(s) ds$ in the weak topology of $H_0$.
Therefore, since for
the operator $\sigma(t_k, u^0_\phi(t_k))$ is compact from $H_0$ to $H$, we deduce that
for every $k$,
\[
\Big| \sigma(t_k, u^0_\phi(t_k)) \Big(   \int_{t_{k-1}}^{t_k} \phi_\e(s) ds - \int_{t_{k-1}}^{t_k} \phi(s) ds
  \Big) \Big|_{L^2}  \to 0~~\mbox{ as }~~\e \to 0.
\]
Hence a.s.,  for fixed $n$ as $\e \to 0$, $\bar{T}_5 (N,n,\e,\omega) \to 0$. Furthermore,
 $\bar{T}_5(N,n,\e,\omega)
\leq C(K_0,K_1,N, M)$ and hence the dominated convergence theorem proves that for any
fixed $n,N$, $\EX(\bar{T}_5(N,n,\e))\to 0$ as $\e \to 0$.
\par
Thus, \eqref{t3}--\eqref{eqT3} imply that for any fixed $N\geq 1$  and any integer $n\geq 1$
\begin{equation*}
\limsup_{\e\to 0}\EX \Big[ 1_{G_{N,\e}(T)} \sup_{0\leq t\leq T} |T_3(t,\e)|\Big] \leq
C_{N,T,M}\;  2^{-n(\gamma \wedge \frac{1}{4} )}.
\end{equation*}
Since $n$ is arbitrary, this yields for any integer $N\geq 1$:
\begin{equation*}
\lim_{\e\to 0}\EX \Big[ 1_{G_{N,\e}(T)} \sup_{0\leq t\leq T} |T_3(t,\e)|\Big] =0 .
\end{equation*}
Therefore from \eqref{*} and \eqref{lambda1} we obtain \eqref{cv1}.
By the Markov inequality
\[
\PP(\|U_\e\|_Y  > \de )  \leq  \PP(G_{N,\e}(T)^c )+ \frac{1}{\delta^2}
 \EX\Big( 1_{G_{N,\e}(T)} \|U_\e\|^2_Y\Big)
~~\mbox{ for any }~~ \de>0.
\]
Finally,  \eqref{G-c} and \eqref{cv1} yield that for any integer $N\geq 1$,
\[
\limsup_{\e\to 0} \PP(\|U_\e\|_Y  > \de )\le  C(T,M) N^{-1},
\]
for some constant $C(T,M)$ which does not depend on $N$.
This implies  $\lim_{\e\to 0} \PP(\|U_\e\|_Y  > \de )=0$ for any $\delta>0$,
which  concludes the proof of  Proposition \ref{weakconv}.  \hfill $\Box$

 \subsection{Proof of the compactness of the set of controlled equations (Proposition \ref{compact})} 

Recall that we want to prove that the set  $K(M)= \{u^0_\phi \in X :  \phi \in S_M \}$ is  a compact subset of $Y$. 
Let $\{u^0_n\}$ be a sequence in $K(M)$, corresponding to solutions of
(\ref{dcontrol}) with controls $\{\phi_n\}$ in $S_M$:
\begin{eqnarray*} 
d u^0_n(t) = F(u^0_n(t) dt + \s(t,u^0_n(t)) \phi_n(t) dt, \;\;
u^0_n(0)=u_0\in {\mathcal H}^{0,1}.
\end{eqnarray*}
 Since $S_M$ is a
bounded closed subset in the Hilbert space $L^2(0, T; H_0)$, it
is weakly compact. So there exists a subsequence of $\{\phi_n\}$, still
denoted as $\{\phi_n\}$, which converges weakly to a limit $\phi$ in
$L^2(0, T; H_0)$. Note that in fact $\phi \in S_M$ as $S_M$ is
closed. We now show that the corresponding subsequence of
solutions, still denoted as  $\{u^0_n\}$, converges in $ Y$
 to $u^0_\phi$ which is the solution of the
following ``limit'' equation
\begin{eqnarray*}
d u^0_\phi (t) = F(u^0_\phi(t)) dt + \s(t, u^0_\phi(t)) \phi(t) dt, \;\;
u(0)=u_0.
\end{eqnarray*}
This will complete the proof of the compactness of $K(M)$.
  To ease notation
 we will often drop the time parameters $s$, $t$, ... in the equations and integrals.
\par

Let $U_n=u^0_n-u^0_\phi$; using \eqref{upper_F-F_t} with $\eta\in (0,\nu)$, Condition {\bf (C)}  and Young's inequality,
we deduce   for $t\in [0, T]$:
\begin{align}
& |U_n(t)|_{L^2}^2 +2\eta  \int_0^t\!\!  |\nabla_h U_n(s)|_{L^2}^2  ds \leq  2C_\eta \int_0^t \|u^0_\phi(s) \|_{1,1}^2 |U_n(s)|_{L^2}^2 ds \nonumber \\
&\quad  
+ 2\int_0^t \Big\{ \Big( \big[ \s(s,u^0_n(s))
-\s(s,u^0_\phi(s))\big] \phi_{n}(s), U_n(s)\Big) \nonumber \\ &{}\qquad
{}\qquad{}\qquad{}\qquad {}\qquad{}\qquad{}\qquad
 + \big( \s(s,u^0_\phi(s)) \big(\phi_n(s)-\phi(s)\big)\, ,\, U_n(s)\big)\Big\} ds  \nonumber \\
& \leq  2 \int_0^t |U_n(s)|^2 \big(
C_{\eta}\|u^0_\phi(s)\|_{1,1}^2 +\sqrt{ L_1} \, |\phi_n(s)|_0\big)\, ds
\nonumber \\
&\quad
 + 2 \int_0^t \Big(  \s(s,u^0_\phi(s))\, [\phi_{n}(s)-\phi(s)]\; ,\; U_n(s)\Big) \, ds  .
\label{error1}
\end{align}
The inequality \eqref{apriori_phi} implies that there exists a finite positive constant $\bar{C}$ such that
\begin{equation}\label{est-uni}
  \sup_n \Big[ \sup_{0\leq t\leq T} \big(|u(t)|_{L^2}^2 + |u_n(t)|_{L^2}^2\big) + \int_0^T
\big( \|u^0_\phi(s)\|_{1,1}^2 +\|u^0_n(s)\|_{1,1}^2\big)ds \Big] =  \bar{C}.
\end{equation}
Thus Gronwall's lemma implies that
\begin{equation} \label{errorbound}
 \sup_{ t\leq T} |U_n(t)|^2 +2\eta \int_0^T |\nabla_h U_n(t)\|_{L^2}^2\, dt  \leq
\exp\Big(2
  \big( {C}_{\eta} \bar{C}  + \sqrt{L_1 M T}  \big)\Big)\,
  \sum_{i=1}^5 I_{n,N}^{i}  ,
\end{equation}
where,  as in the proof of  Proposition~\ref{weakconv}, we have for $t_k=kT2^{-N}$:
\begin{eqnarray*}
I_{n,N}^1&=& \int_0^T \big| \big( \s(s,u^0_\phi(s))\,  [\phi_n(s)- \phi(s)]\, ,\, U_n(s)-U_n(\bar{s}_N)\big)\big|\, ds,
\\
I_{n,N}^2&=& \int_0^T\Big|  \Big( \big[ \s(s,u^0_\phi(s)) - \s(\bar{s}_N, u^0_\phi(s)) \big]  [\phi_n(s)-\phi(s)]\, ,\,
U_n(\bar{s}_N)\Big)\Big| \, ds,  \\
I_{n,N}^3&=& \int_0^T\Big|  \Big( \big[ \s(\bar{s}_N,u^0_\phi(s)) - \s(\bar{s}_N,u^0_\phi(\bar{s}_N))\big]
  [\phi_n(s)-\phi(s)]\, ,\,
U_n(\bar{s}_N)\Big)\Big| \, ds,  \\
I_{n,N}^4&=& \sup_{1\leq k\leq 2^N} \sup_{t_{k-1}\leq t\leq t_k} \Big|\Big( \s(t_k,u^0_\phi(t_k ))
 \int_{t }^{t_k}  (\phi_\e(s)-\phi(s))
ds \; ,\; U_n(t_k) \Big)\Big| , \\
I_{n,N}^5&=& \sum_{k=1}^{2^N} \Big( \s(t_k, u^0_\phi(t_k )) \,  \int_{t_{k-1}}^{t_k }
[ \phi_n(s)-\phi(s)]\, ds   \; ,\; U_n(t_k ) \Big) .
\end{eqnarray*}
The Cauchy-Schwarz  inequality, condition {\bf (C') (i)}  and Lemma \ref{timeincrement} imply that
for some constants   $C_i$,  which depend on $M$ and $T$, but do not
depend on $n$ and $N$:
\begin{align} \label{estim1}
I_{n,N}^1 &\leq \big( {K}_0 + {K}_1 \bar{C}\big)^\frac{1}{2} 
\Big( \int_0^T \!\! \big( |u^0_n(s)-u^0_n(\bar{s}_N)|_{L^2}^2 + |u^0_\phi(s)-u^0_\phi (\bar{s}_N)|_{L^2}^2\big) ds
 \Big)^{\frac{1}{2}}
\nonumber\\
&\qquad \times  \Big( \int_0^T\!\! |\phi_n(s)-\phi(s)|_0^2 ds \Big)^{\frac{1}{2}}
\leq C_1 \; 2^{-\frac{N}{4}} \, ,\\
I_{n,N}^3 &\leq 2\sqrt{L_1 \bar{C}}  \Big(\int_0^T \!\! |u^0_\phi(s)-u^0_\phi(\bar{s}_N)|_{L^2}^2 ds \Big)^{\frac{1}{2}}
 \Big( \int_0^T \!\! |\phi_n(s)-\phi(s)|_0^2\, ds\Big)^{\frac{1}{2}} \leq C_3\;  2^{-\frac{N}{4}}\, ,
\label{estim2}\\
I_{n,N}^4 &\leq \big( {K}_0 + {K}_1 \bar{C}\big)^{\frac{1}{2}} 2\sqrt{\bar{C} }  \sup_{k=1, \cdots, 2^N} \Big( \int_{t_{k-1}}^{t_{k}} |\phi_n(s)
- \phi(s)|_0^2 ds \Big)^{\frac{1}{2}}
\leq \; C_4\;  2^{-\frac{N}{2}}\,  .
\label{estim3}
\end{align}
Furthermore, condition {\bf (C')(ii)} implies that
\begin{align} \label{eqHolderdet}
I_{n,N}^2 &\leq C 2^{-N\gamma}\, \sup_{0\leq t\leq T}\big(|u^0_\phi(t)|_{L^2}+|u^0_n(t)|_{L^2}\big)
 \int_0^T (1+\|u^0_\phi(s)\|_{1,0})  (|\phi(s)|_0+
|\phi_n(s)|_0)\, ds \nonumber \\
& \leq C_2\, 2^{-N \gamma}.
\end{align}
For fixed $N$ and $k=1, \cdots, 2^N$,
 as $n\to \infty$, the weak convergence of $\phi_n$ to $\phi$ implies
that of $\int_{t_{k-1}}^{t_k} (\phi_n(s)-\phi(s))ds$ to 0 weakly in $H_0$.
 Since $\s(t_k, u^0_\phi(t_k))$ is a compact operator,
we deduce that for fixed $k$ the sequence $\s(t_k,u^0\phi(t_k)) \int_{t_{k-1}}^{t_k} (\phi_n(s)-\phi(s))ds$
converges to 0 strongly in $H$ as $n\to \infty$.
Since $\sup_{n,k}   |U_n(t_k)|\leq 2 \sqrt{\bar{C}}$, we have
 $\lim_n I_{n,N}^5=0$. Thus
 \eqref{errorbound}--
\eqref{eqHolderdet} yield  for every integer $N\geq 1$
\[
\limsup_{n \to \infty}\left\{  \sup_{ t\leq T} |U_n(t)|_{L^2}^2 +\int_0^T \|U_n(t)\|_{1,0}^2\, dt
\right\}\le C 2^{-N (\gamma \wedge \frac{1}{4})}. 
\]
Since $N$ is arbitrary, we deduce that $\|U_n\|_Y \to 0$ as $n\to \infty$.
This shows that every sequence in $K(M)$ has a convergent
subsequence. Hence $K(M)$ is  a sequentially relatively compact subset of $Y$.
Finally, let $\{u^0_n\}$ be a sequence of elements of $K(M)$ which converges to $v$ in $Y$.
 The above argument
shows that there exists a subsequence $\{u^0_{n_k}, k\geq 1\}$
which converges to some element $u_\phi \in K(M)$
for the same topology of $Y$.
 Hence $v=u^0_\phi$,
$K(M)$ is a closed subset of $Y$, and this completes the proof of  Proposition \ref{compact}. \hfill $\Box$

\vspace{.5cm}

\noindent {\bf Acknowledgements.} This   research was  started
in the fall 2016 while Annie Millet visited the University of Wyoming. 
She would like to thank this University  for the hospitality and the very pleasant  working conditions.
 Hakima Bessaih is partially supported by NSF grant DMS-1418838.\\
 The authors wish to thank anonymous referees for helpful comments and careful reading.


\begin{thebibliography}{99}

\bibitem{BCD} H. Bahouri, J.-Y. Chemin \& R. Danchin, Fourier Analysis and  Nonlinear 
Partial Differential Equations, 
Grundlehren des Mathemathischen Wissenschaften [Fundamental Principles of Mathematical Sciences], 
343, Springer, Heidelberg (2011).

\bibitem{BL} J. W. Barret  \& W. B. Liu,    Finite element approximation of the parabolic p-Laplacian,  {\em SIAM J. Numer. Anal.}, {\bf 31} (1994), 
 413--428.

\bibitem{BTB} R. Bennacer, A. Tobbal \& H. Beji, Convection naturelle thermostable dans une cavit\'e poreuse anisotrope: 
Formulation de Darcy-Brinkman, {\em Rev. Energ. Ren.}, {\bf 5} (2002), 1--21.


\bibitem{BM}
H. Bessaih, A. Millet, Large deviations and the zero viscosity limit for 2D stochastic Navier-Stokes equations with free boundary. {\em SIAM J. Math. Anal.},  {\bf 44} (2012), no. 3, 1861--1893.

\bibitem{BTZ} H. Bessaih, S. Trabelsi \& H. Zorgati, Existence and uniqueness of global solutions for the modified
anisotropic ED Navier-Stokes equatiions. {\em M2AN}, {\bf 50} (2016), 1817--1823. 

\bibitem{BD00} A. Budhiraja \& P. Dupuis, A variational
representation for positive functionals of infinite dimensional
Brownian motion, {\em Prob. and Math. Stat.} {\bf 20} (2000),
39--61.

\bibitem{BD07} A. Budhiraja, P. Dupuis \& V. Maroulas,
Large deviations for infinite dimensional stochastic dynamical
systems. \emph{Ann. Prob.} {\bf 36} (2008), 1390--1420.

\bibitem{CJ} X. Cai \& Q. Jui, Weak and strong solutions for the incompressible 
Navier-Stokes equations with dumping, 
{J. of Math. Ana. and App.}, {\bf 343(2)} (2008), 799-809. 

\bibitem{CDGG_2000} J.-Y. Chemin, B. Desjardin, I. Gallagher \& E. Grenier, Fluids with anisotropic viscosity, 
{\em M2AN} {\bf 34} (2000), 315-335.

\bibitem{CDGG_2006} J.-Y. Chemin, B. Desjardin, I. Gallagher \& E. Grenier, 
Mathematical Geophysics: An Introduction to
Rotating Fluids  and the Navier-Stokes Equations, 
Oxford Lecture Series in Mathematics and its Applications {\bf 32} (2006). 

\bibitem{CG94}
M. Capinsky \& D. Gatarek,
Stochastic equations in Hilbert space with application to Navier-Stokes equations in any
dimension, {\em J. Funct. Anal.} \textbf{126} (1994) 26--35.



\bibitem{Constantin}  P. Constantin \& C. Foias,
\emph{Navier-Stokes Equations}, U. of Chicago Press, Chicago, 1988.

\bibitem{CM}  I. Chueshov \& A. Millet, Stochastic 2D hydrodynamical type systems: 
Well posedeness and large deviations.
{\em Applied Math. Optim.} {\bf  61(3)}  (2010) , 379--420.



\bibitem{PZ92} G. Da Prato \& J. Zabczyk, {\em Stochastic Equations
in Infinite Dimensions}, Cambridge University Press, 1992.



\bibitem{DM} J. Duan \& A. Millet, Large deviations for the Boussinesq equations
  under random influences,  {\em Stochastic Processes and their Applications}  {\bf 119-6} (2009),
2052--2081.

\bibitem{DupEl97}
P. Dupuis \& R.S. Ellis, {\em  A Weak Convergence Approach to the Theory of Large
Deviations}, Wiley-Interscience, New York, 1997.


\bibitem{KZ}
V. Kalantarov, S. Zelik, {Smooth attractors for the Brinkman-Forchheimer equations with fast growing nonlinearities},  
{\em Commun. Pure Appl. Anal.}  {\bf 11-5} (2012), 2037--2054. 



\bibitem{Kunita}
H. Kunita, {\em Stochastic flows and stochastic differential equations},
 Cambridge University Press, Cambridge-New York, 1990.



\bibitem{MTT}  P. A. Markowich, E. S. Titii \&  S. Trabelsi, 
Continuous data assimilation for the three-dimensional BrinkmanÐ Forchheimer-extended Darcy model,
{\em Nonlinearity}, {\bf  29} (2016),  129--1328. 

\bibitem{MS02}
J.L. Menaldi \& S.S. Sritharan, Stochastic 2-D Navier-Stokes equation,
\emph{Appl. Math. Optim.} \textbf{46} (2002), 31--53.

 \bibitem{Pardoux} 
E.~Pardoux,  Equations aux d\'eriv\'ees partielles stochastiques non lin\'eaires monotones; Etude de solutions fortes de type It\^o.
Th\`ese, Univ. Paris Sud, 1975.

\bibitem{Ped} J. Pedlosky, {\em Geophysical Fluid Dynamics}, Springer Verlag, New York 1987.



\bibitem{R_TZ_XZ} M. R\"ockner,  T. Zhang \& X. Zhang, Large deviations for the tamed stochastic Navier-Stokes equations,
{\em Appl. Math. Optim.} {\bf 61 (2)} (2010), 267--285. 

\bibitem{R_TZ} M. R\"ockner \& T. Zhang, Stochastic 3D tamed equations:Existence, uniqueness and small time large
deviation principle,
{\em J. Differential Equations} {\bf 252} (2012), 717--744. 

\bibitem{R_XZ} M. R\"ockner \& X. Zhang, Stochastic tamed 3D Navier-Stokes equations: existence, uniqueness and ergodicity,
{\em Probab. Theory Related Fields} {\bf 145 (1-2)} (2009), 211-267.





\bibitem{Sundar} S. S. Sritharan \& P. Sundar,
Large deviations for the two-dimensional
Navier-Stokes equations with multiplicative noise, {\em Stoch.
Proc. and Appl.}  \textbf{116} (2006), 1636--1659.


\bibitem {Temam} 
R. Temam:
{\it  Navier-Stokes equations. Theory and numerical analysis.
Studies in Mathematics and its Applications 2}, 
North-Holland Publishing Co., Amsterdam-New York, 1979.





\end{thebibliography}
\end{document}